\begin{document}
\title{A geometric approach to the uniform boundedness of $\ell$-primary torsion points}

\author{Zhuchao Ji, Jiarui Song, Junyi Xie}

\maketitle

\begin{abstract}
We prove that for a non-isotrivial abelian scheme over a smooth curve, the genus of a generic sequence of multi-sections with small heights tends to infinity.

As an application, we give a new proof of the uniform boundedness of $\ell$-primary torsion points on fibers of an abelian scheme over a smooth curve, a result originally proved by Cadoret and Tamagawa. Furthermore, our approach allows us to resolve a conjecture of Cadoret and Tamagawa without additional assumptions.

Our approach is based on the theory of Betti foliations and the arithmetic equidistribution theorem. 
\end{abstract}

\theoremstyle{plain}
\newtheorem{thm}{Theorem}[section]
\newtheorem{theorem}[thm]{Theorem}
\newtheorem{cor}[thm]{Corollary}
\newtheorem{corollary}[thm]{Corollary}
\newtheorem{lem}[thm]{Lemma}
\newtheorem{lemma}[thm]{Lemma}
\newtheorem{pro}[thm]{Proposition}
\newtheorem{proposition}[thm]{Proposition}
\newtheorem{prop}[thm]{Proposition}
\newtheorem{assumption}[thm]{Assumption}
\newtheorem{conjecture}[thm]{Conjecture}
\newtheorem{sublemma}[thm]{Sub-lemma}

\theoremstyle{definition}
\newtheorem{definition}[thm]{Definition}

\theoremstyle{remark}
\newtheorem{remark}[thm]{Remark}
\newtheorem{example}[thm]{Example}
\newtheorem{remarks}[thm]{Remarks}
\newtheorem{problem}[thm]{Problem}
\newtheorem{exercise}[thm]{Exercise}
\newtheorem{situation}[thm]{Situation}
\newtheorem{Question}[thm]{Question}

\newcommand\mO{\mathcal{O}}
\newcommand{\mf}[1]{\mathfrak{#1}}
\newcommand{\ms}[1]{\mathscr{#1}}
\newcommand{\mb}[1]{\mathbb{#1}}
\newcommand{\mr}[1]{\mathrm{#1}}
\newcommand{\mc}[1]{\mathcal{#1}}
\newcommand{\ov}{\overline}
\newcommand{\Zar}[1]{\overline{#1}^{\mathrm{Zar}}}
\newcommand\ra{\rightarrow}
\newcommand{\ora}[1]{\stackrel{#1}\longrightarrow}
\newcommand{\cra}{\stackrel{\sim}\rightarrow}
\newcommand\mheight{\mathrm{height\,}}
\newcommand\wt{\widetilde}
\newcommand\im{\mathrm{im\,}}
\newcommand\Hom{\mathrm{Hom}}
\newcommand\Spe{\mathrm{Spec\,}}
\newcommand\di{\mathrm{div}}
\newcommand\Pro{\mathrm{Proj\,}}
\newcommand\sHom{\mathscr{H}om}
\newcommand\Supp{\mathrm{Supp\,}}
\newcommand\Ann{\mathrm{Ann\,}}
\newcommand\Ext{\mathrm{Ext}}
\newcommand\sExt{\mathscr{E}xt}
\newcommand\cdim{\mathrm{codim}}
\newcommand\Cl{\mathrm{Cl\,}}
\newcommand\Pic{\mathrm{Pic\,}}
\newcommand\ord{\mathrm{ord}}
\newcommand\Gal{\mathrm{Gal}}
\newcommand\dra{\dashrightarrow}
\newcommand\reldeg{\mathrm{reldeg}}
\newcommand{\mib}[1]{\textit{\textbf{#1}}}

\numberwithin{equation}{subsection}

\tableofcontents

\section{Introduction}\label{Sec_intro}
In the study of Diophantine geometry, a central object of interest is the torsion subgroup of an abelian variety. The uniform boundedness conjecture predicts the existence of a uniform upper bound on the order of torsion subgroups based only on the dimension of the abelian variety and the degree of the base field.

\begin{conjecture}\label{conj_ubc}
Fix positive integers $g$ and $D$. There exists a positive integer $N(g,D)$ such that for any abelian variety $A$ of dimension $g$ defined over a number field $K$ of degree $[K:\mb{Q}] = D$, the number of $K$-rational torsion points is bounded by $N(g,D)$:
\[ \# A(K)_{\mr{tors}} \leq N(g,D). \]
\end{conjecture}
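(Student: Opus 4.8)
\medskip

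The plan is to separate the torsion into its $\ell$-primary parts, bound each uniformly, and simultaneously bound which primes $\ell$ can occur. Since $A(K)$ is finitely generated by the Mordell--Weil theorem, $A(K)_{\mr{tors}}$ is finite and
\[
\#A(K)_{\mr{tors}}=\prod_{\ell}\#A(K)[\ell^\infty].
\]
Hence Conjecture~\ref{conj_ubc} is equivalent to the conjunction of the following two uniform statements, in which $A$ ranges over all abelian varieties of dimension $g$ over all number fields $K$ with $[K:\mb{Q}]=D$:
\textbf{(a)} for every prime $\ell$ there is $N_\ell(g,D)$ with $\#A(K)[\ell^\infty]\le N_\ell(g,D)$; and
\textbf{(b)} there is $\ell_0(g,D)$ such that $A(K)[\ell]=0$ for every prime $\ell>\ell_0(g,D)$.
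Granting both, for $\ell>\ell_0(g,D)$ the finite $\ell$-group $A(K)[\ell^\infty]$ vanishes, so $\#A(K)_{\mr{tors}}\le\prod_{\ell\le\ell_0(g,D)}N_\ell(g,D)=:N(g,D)$, which depends only on $g$ and $D$.

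For step \textbf{(a)} I would upgrade the fibered $\ell$-primary uniform boundedness theorem of this paper to arbitrary $g$-dimensional abelian varieties by a moduli argument. Suppose \textbf{(a)} fails for some $\ell$: there are $A_n/K_n$ with $[K_n:\mb{Q}]=D$ and $\#A_n(K_n)[\ell^{k_n}]\to\infty$. After adding an auxiliary level structure, the classifying points of the $A_n$ lie on the moduli space $\mc{A}_g$ of principally polarized abelian varieties, and a Bertini/Hilbert-irreducibility argument lets one pass to a subsequence lying on a single smooth curve $B\subset\mc{A}_g$ of bounded degree; the $A_n$ then become fibers $\mc{X}_{s_n}$ of one abelian scheme $\mc{X}\to B$ with $[\kappa(s_n):\mb{Q}]$ bounded in terms of $D$. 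If $\mc{X}\to B$ is isotrivial, one is reduced, after a base change of bounded degree, to bounding the torsion of a single abelian variety, which is finite; otherwise one applies the $\ell$-primary uniform boundedness theorem to the non-isotrivial $\mc{X}\to B$. The one genuine point here is that the bound so obtained must be uniform over all curves $B$ of bounded degree and abelian schemes $\mc{X}\to B$ of bounded relative complexity; since the genus-growth theorem concerns \emph{generic} sequences of \emph{small-height} multi-sections --- invariants that behave well in families --- I expect a version uniform over bounded-complexity data to be extractable from the arguments of this paper, with the arithmetic equidistribution theorem controlling the limiting measure of the small-height points.

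The essential obstacle is step \textbf{(b)}, the uniform bound on torsion primes, which the $\ell$-primary results of this paper do not address and which is open for $g\ge 2$; for $g=1$ it is the theorem of Merel (the cases $D=1$ and $D=2$ being earlier results of Mazur and Kamienny). A $K$-rational point $P\in A$ of exact order $\ell$ yields a $K$-rational point of the moduli space $\mc{Y}_g(\ell)$ of $g$-dimensional principally polarized abelian varieties equipped with a point of exact order $\ell$, hence an algebraic point of degree $O_g(D)$ on a fixed model of $\mc{Y}_g(\ell)$; statement \textbf{(b)} is exactly the assertion that for $\ell>\ell_0(g,D)$ this variety has no point of degree $\le O_g(D)$. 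For $g=1$ this rests on the growth of the gonality of $X_1(\ell)$ together with a rational-point argument on its Jacobian. For $g\ge 2$ the natural route within the present framework would be to slice $\mc{Y}_g(\ell)$ by curves and prove, \emph{uniformly in} $\ell$, that these slices are never isotrivial and that a suitable genus- or gonality-type invariant of $\mc{Y}_g(\ell)$ tends to infinity with $\ell$ --- for instance, by extracting from a hypothetical infinite family of bounded-degree points on the $\mc{Y}_g(\ell_n)$ a generic sequence of small-height multi-sections on a non-isotrivial abelian scheme over a curve and contradicting the genus-growth theorem. Making this quantitative and uniform in $\ell$, and controlling the degenerate slices, is the step I expect to be the real difficulty, and it is where the geometry developed in this paper would have to be pushed considerably further.
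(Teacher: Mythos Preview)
The statement you are attempting to prove is Conjecture~\ref{conj_ubc}, which the paper does \emph{not} prove; it is stated precisely as an open problem (``the higher-dimensional case remains widely open''), and the paper's contribution is the much weaker Theorem~\ref{thm_CT} concerning $\ell$-primary torsion on fibers of a \emph{fixed} abelian scheme over a curve. So there is no ``paper's own proof'' to compare against.

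Your decomposition into \textbf{(a)} and \textbf{(b)} is standard and correct, and you are right that \textbf{(b)} is the deep open point. However, your proposed argument for \textbf{(a)} also has a genuine gap. The paper's theorem bounds $\#\mc{A}_b(k')[\ell^\infty]$ only for $b$ ranging over a \emph{single} abelian scheme $\mc{A}\to B$; your step \textbf{(a)} asks for a bound uniform over \emph{all} $g$-dimensional $A/K$ with $[K:\mb{Q}]=D$, which is already a major strengthening. Your moduli reduction claims that a subsequence of the classifying points of the $A_n$ can be placed on a single smooth curve $B\subset\mb{A}_g$ of bounded degree via ``a Bertini/Hilbert-irreducibility argument''. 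This does not work: for $g\geq 2$ the moduli space $\mb{A}_g$ has dimension $g(g+1)/2\geq 3$, and an infinite set of points in a variety of dimension $\geq 2$ need not have any infinite subset lying on a curve of bounded degree --- indeed one can choose points so that every curve through $n$ of them has degree tending to infinity with $n$. Bertini produces curves through finitely many prescribed points, not infinitely many, and Hilbert irreducibility does not help here. Thus even granting all the results of the paper, your reduction of \textbf{(a)} to the fibered case collapses at this step. (For $g=1$ the moduli space is itself a curve and this issue disappears, but then one is back in the setting of Mazur--Kamienny--Merel anyway.)

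In short: both \textbf{(a)} and \textbf{(b)} in the generality you need remain open, and the paper neither proves nor claims to prove Conjecture~\ref{conj_ubc}.
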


In the case where $A$ is an elliptic curve over $\mb{Q}$, Conjecture \ref{conj_ubc} was proved by Mazur \cite{Maz77,Maz78} through a classification of torsion subgroups. For general number fields, the conjecture for elliptic curves was advanced by Kamienny \cite{Kam92} and Kamienny--Mazur \cite{KM95}, and was finally completed by Merel \cite{Mer96}.

The higher-dimensional case remains widely open. Given that the moduli space of elliptic curves is one-dimensional, it is natural to investigate abelian schemes over curves. In \cite{CT12}, Cadoret and Tamagawa proved the following result, which can be viewed as a weaker variant of the uniform boundedness conjecture.

\begin{theorem}[{\cite[Theorem 1.1]{CT12}}]\label{thm_CT}
Let $k$ be a finitely generated field over $\mb{Q}$, $B$ be a smooth curve over $k$, and $\pi: \mc{A} \to B$ be an abelian scheme. For any finite extension $k'/k$ and any integer $\ell$, there exists an integer $N \colonequals  N(\mc{A}, B, k, k', \ell)$ such that for all $b \in B(k')$, 
\[ \#\mc{A}_b(k')[\ell^{\infty}] \leq N, \]
where $\mc{A}_b(k')[\ell^{\infty}] \colonequals  \bigcup\limits_{n=0}^{\infty} \mc{A}_b(k')[\ell^n]$ denotes the set of $\ell$-primary torsion points in $\mc{A}_b(k')$. 
\end{theorem}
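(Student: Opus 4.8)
The plan is to deduce Theorem~\ref{thm_CT} from the genus-growth statement announced in the abstract — that a \emph{generic} sequence of small-height multi-sections of a non-isotrivial abelian scheme over a smooth curve has genus tending to infinity — together with Faltings' theorem over finitely generated fields. I would first carry out the standard reductions. Enlarging $k$ we may assume $k'=k$; write $K=k(B)$, with generic point $\eta$ and geometric generic point $\ov\eta$. The maximal isotrivial abelian subvariety of $\mc{A}_{\ov\eta}$ is canonical, hence defined over $K$, so Poincar\'e reducibility isogenously splits $\mc{A}_\eta$ over $K$ into an isotrivial abelian variety times one with trivial $\ov{K}$-trace; this isogeny extends to an isogeny of abelian schemes over $B$, which changes the $\ell$-primary torsion of the fibers only by a factor bounded in terms of its degree, and the isotrivial factor is treated by a separate classical argument, so we reduce to the case where $\mc{A}\ra B$ has no nonzero isotrivial abelian subscheme, even after base change. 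Finally, suppose for contradiction the torsion is unbounded: we get $b_i\in B(k)$ and torsion points $P_i\in\mc{A}_{b_i}(k)$ of exact order $\ell^{n_i}$ with $n_i\ra\infty$. Since $\mc{A}_b(k)$ is finitely generated by the Lang--N\'eron theorem, its torsion subgroup is finite, so no closed point occurs among the $b_i$ infinitely often; after passing to a subsequence the $b_i$ are pairwise distinct and the $n_i$ strictly increasing.

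The key geometric point is that the torsion schemes then have infinitely many $k$-points at \emph{every} level. Fix $j\geq1$ and let $\mc{A}[\ell^j]^{\circ}:=\mc{A}[\ell^j]\setminus\mc{A}[\ell^{j-1}]$ be the finite \'etale $B$-scheme of points of exact order $\ell^j$. For every $i$ with $n_i\geq j$ the point $(b_i,\ell^{n_i-j}P_i)$ is a $k$-point of $\mc{A}[\ell^j]^{\circ}$, and these points lie over the pairwise distinct $b_i$, so $\mc{A}[\ell^j]^{\circ}(k)$ is infinite. As $\mc{A}[\ell^j]^{\circ}$ has only finitely many irreducible components over $k$, some component $D_j$ has $D_j(k)$ infinite, and therefore, by Faltings' theorem over finitely generated fields of characteristic zero, the genus of $D_j$ is $\leq1$. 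On the other hand $D_j$ is an irreducible curve finite over $B$ — a multi-section of $\mc{A}\ra B$ — whose generic point is a torsion point of $\mc{A}_{\ov\eta}$ of exact order $\ell^j$, so the N\'eron--Tate height of $D_j$ is $0$. Thus $(D_j)_{j\geq1}$ is a sequence of height-zero multi-sections whose torsion orders tend to infinity, yet $g(D_j)\leq1$ for all $j$.

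If $(D_j)$ is a generic sequence, the genus-growth theorem forces $g(D_j)\ra\infty$, a contradiction; this is the heart of the matter. If $(D_j)$ is not generic then, after passing to a subsequence, the $D_j$ are contained in a common proper torsion coset $z+\mc{B}$ of an abelian subscheme $\mc{B}\subsetneq\mc{A}$; having already removed the isotrivial part, $\mc{B}$ is again non-isotrivial and of strictly smaller relative dimension, and one concludes by translating and inducting on $\dim(\mc{A}/B)$, the base case of relative dimension zero being immediate because there the orders of torsion along multi-sections are bounded by $\deg(\mc{A}/B)$. The theory of Betti foliations enters precisely here: the torsion multi-sections are integral leaves of the Betti foliation, and the non-degeneracy of the Betti map — a consequence of non-isotriviality together with triviality of the trace — is what simultaneously underlies the genus-growth theorem and rules out such concentration, so after the reductions genericity should hold essentially automatically.

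Given the genus-growth theorem, the remaining obstacles I expect are twofold: the bookkeeping in the non-generic case, where the points $(b_i,P_i)$ must be kept $k$-rational throughout the descent to $\mc{B}$ — which is why the isotrivial part is split off over $K$ rather than merely over $\ov{K}$ — and the isotrivial case itself, i.e.\ the uniform boundedness of $\ell$-primary torsion in the $k$-fibers of an isotrivial abelian scheme. The deepest ingredient, however, is the genus-growth theorem, whose proof combines the geometry of Betti foliations with the arithmetic equidistribution theorem.
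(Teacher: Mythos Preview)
Your proposal is correct and follows essentially the same strategy as the paper: reduce to the case of no isotrivial subscheme, manufacture a sequence of torsion multi-sections of bounded genus via Faltings' theorem, and contradict the genus-growth theorem (Theorem~\ref{thm_main1}); the isotrivial piece is handled separately (Lemma~\ref{lem_isotriv} in the paper). The organizational differences are minor but worth noting. First, the paper routes through the stronger Theorem~\ref{thm_sec1} (uniform boundedness of $\ell$-primary preimages of an arbitrary section), using a tree argument on the set $T=\{x:\ell^n x=x_0,\ g(C_x)\le 1\}$, whereas you go directly to the torsion schemes $\mc{A}[\ell^j]^{\circ}$; both produce the desired low-genus sequence. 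Second, for the non-generic case you assert that a subsequence of the $D_j$ lies in a proper torsion coset $z+\mc{B}$ and then induct on $\dim(\mc{A}/B)$; the paper instead invokes Moriwaki's Bogomolov conjecture over finitely generated fields (Theorem~\ref{thm_bogotype}), which immediately forces the Zariski closure to be a torsion coset $A_1+a$, and then applies Theorem~\ref{thm_main1} directly to $\mc{A}_1\to B$ to conclude $\mc{A}_1$ is isotrivial, a contradiction---no further induction needed. Since your $x_j$ are all torsion, Manin--Mumford would suffice in place of Bogomolov, but you should name the ingredient; the torsion-coset structure does not come for free from non-genericity alone. Third, you split off the isotrivial factor via a Poincar\'e isogeny at the outset, while the paper takes the quotient $\mc{A}/\mc{A}_1$ inside the induction; these are equivalent. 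Your final remark that ``genericity should hold essentially automatically'' from Betti non-degeneracy is not how the paper proceeds and is not obviously true: the paper (and your own preceding paragraph) genuinely uses the torsion-coset step to reduce to a generic situation.
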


\begin{remark}
By the Mordell's Conjecture (Faltings' Theorem), if the genus $g(B) \geq 2$, the curve $B(k')$ contains only finitely many rational points. Thus $\mc{A}_b(k')\neq \varnothing$ for only finitely many fibers. Consequently, Theorem \ref{thm_CT} is interesting only when $g(B) = 0$ or $1$.
\end{remark}

The proof by Cadoret and Tamagawa relies heavily on the arithmetic technique of Galois representations. It is therefore natural to ask whether a geometric approach to this problem exists. In this article, we provide a geometric proof of Theorem \ref{thm_CT} by utilizing the theory of Betti foliations and equidistribution theorems.

\subsection{Main Results}
To provide a geometric proof of Theorem \ref{thm_CT}, the primary ingredient is the following result:

\begin{theorem}\label{thm_main1}
Let $k$ be a finitely generated field over $\mb{Q}$, $B$ be a smooth curve over $k$, and $\pi: \mc{A} \to B$ be an abelian scheme. Let $K \colonequals  k(B)$ be the function field of $B$ and $A \colonequals  \mc{A}_K$ be the generic fiber. Suppose $(x_n)_{n \geq 1}$ is a generic and small sequence in $A(\ov{K})$, and let $C_n \colonequals  \Zar{\{x_n\}}$ be the Zariski closure of $x_n$ in $\mc{A}$. If the genus sequence $\big(g(C_n)\big)_{n\geq 1}$ satisfies 
\[\liminf_{n\to\infty}\frac{g(C_n)}{\deg(x_n)}=0,\] then the abelian scheme $\pi: \mc{A} \to B$ is isotrivial. 
\end{theorem}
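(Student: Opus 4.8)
We argue by contraposition. Assume $\pi$ is non-isotrivial; we must show that every generic small sequence $(x_n)$ satisfies $\liminf_n g(C_n)/\deg(x_n)>0$. The plan combines three ingredients: an upper bound for the \emph{Betti area} $\int_{C_n^{\mr{an}}}\omega_{\mc{F}}$ of $C_n$, coming from smallness together with the genus hypothesis; a lower bound for the same quantity, coming from non-isotriviality via the arithmetic equidistribution theorem and the submersivity of the Betti map; and the bookkeeping relating the canonical height to its local contributions at the archimedean and non-archimedean places.

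\emph{Reductions.} Embedding $k$ into $\mb{C}$, and (if necessary) specializing a transcendence basis of $k/\mb{Q}$ at a very general point, which preserves non-isotriviality, smallness and genericity, we may assume $k$ is a number field, so that $\mc{A}$ is a variety over a number field with a distinguished archimedean place. Passing to a subsequence we may assume $g(C_n)=o(\deg x_n)$; a Northcott-type argument, using that the $K/k$-image of $A$ is either trivial or a proper subvariety that genericity forbids all but finitely many $x_n$ from meeting, gives $\deg(x_n)\to\infty$. Riemann--Hurwitz for $C_n\to B$ gives $g(C_n)\ge 1+\deg(x_n)\bigl(g(\ov{B})-1\bigr)$, whence $g(\ov{B})\le 1$. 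Finally, the Arakelov-type inequality $0<\deg\pi_\ast\omega_{\mc{A}/B}\le \tfrac{g}{2}\bigl(2g(\ov{B})-2+\#S\bigr)$, valid after a semistable reduction, forces $S:=\ov{B}\setminus B$ to be nonempty and to contain points of genuine unipotent bad reduction; the interesting case is $g(\ov{B})\in\{0,1\}$ with a nonempty locus of degenerate fibers.

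\emph{The Betti form and the two bounds.} Equip a symmetric relatively ample line bundle $M$ on $\mc{A}$ with its canonical (cubical) metric; its curvature is the Betti form $\omega_{\mc{F}}$, a semipositive closed $(1,1)$-form that restricts to the polarization form on each fiber and is annihilated by the tangent directions of the Betti foliation $\mc{F}$. Decomposing the canonical height into local heights on a model over $\ov{B}$ yields an identity
\[ \hat{h}_M(x_n)\,\deg(x_n)=\int_{C_n^{\mr{an}}}\omega_{\mc{F}}+\sum_{v\in S}\lambda_v(x_n), \]
in which $\lambda_v(x_n)$ is a sum, over the finitely many branches of $C_n$ above $v$, of non-archimedean canonical local heights, each a piecewise-quadratic ``tropical'' function of the toric coordinates of the reduction of $x_n$. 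Since $(x_n)$ is small the left-hand side is $o(\deg x_n)$, and since $\omega_{\mc{F}}\ge 0$ this already gives $\sum_v\lambda_v(x_n)\le o(\deg x_n)$; the genus hypothesis, which through Riemann--Hurwitz controls the ramification of $C_n\to B$ over $S$ and hence the size of the negative tropical terms, is meant to supply the matching lower bound $\sum_v\lambda_v(x_n)\ge -o(\deg x_n)$, and together these force $\int_{C_n^{\mr{an}}}\omega_{\mc{F}}=o(\deg x_n)$. For the opposite estimate, twist $M$ by the pullback of an ample line bundle on $\ov{B}$ to obtain a big and nef adelically metrized line bundle $L$; then $(C_n)$ is still generic and is now small for $L$, so Yuan's arithmetic equidistribution theorem applies and the normalized curvature measures $c_1(\ov{L}|_{C_n})/\deg_L(C_n)$ on $C_n^{\mr{an}}$ equidistribute to the canonical measure on $\mc{A}^{\mr{an}}$. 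That measure charges the Zariski-dense open locus on which, by the theorem of Andr\'e--Corvaja--Zannier and Gao, the Betti map is submersive; over such a region an algebraic curve tangent to $\mc{F}$ would be a torsion multisection by Ax--Lindemann-type results for abelian schemes, which $C_n$ is not, while a curve of genus $o(\deg x_n)$ is too rigid to remain merely almost-tangent to $\mc{F}$ over a region carrying a definite proportion of its degree. Hence $\int_{C_n^{\mr{an}}}\omega_{\mc{F}}\gtrsim\deg(x_n)$, contradicting the first bound.

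The decisive and most delicate points are the two quantitative passages just indicated. First, converting the genus hypothesis into $\sum_{v\in S}\lambda_v(x_n)\ge -o(\deg x_n)$ requires a careful analysis, near the boundary of a toroidal model of $\mc{A}$ over $\ov{B}$, of the interplay between the toric coordinates of the reductions of $x_n$, the ramification profile of $C_n\to B$ over $S$, and the local monodromy; it is most delicate when $g(\ov{B})=0$, where $C_n\to B$ already ramifies heavily just by Riemann--Hurwitz. Second, converting ``$C_n$ is generic, of small genus, and equidistributed'' into the transversality estimate $\int_{C_n^{\mr{an}}}\omega_{\mc{F}}\gtrsim\deg(x_n)$ is where the Betti foliation must genuinely be confronted with the hyperbolicity carried by the non-isotrivial variation of Hodge structure. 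The reductions, the height decomposition, and the invocation of the equidistribution theorem itself are comparatively formal once the canonical metrized line bundle and the Betti form are in place.
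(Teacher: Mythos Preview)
Your proposal has a genuine gap in the ``lower bound'' step, and the overall architecture differs from the paper's in a way that matters.

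First, the upper bound $\int_{C_n(\mathbb{C})}\omega_{\mc{F}}=o(d_n)$ follows \emph{directly} from smallness, without any appeal to the genus hypothesis or to local heights at $S$: one has $\varepsilon\cdot\widehat{h}_{L,\mr{geom}}(x_n)\le \widehat{h}_L^{\ov H}(x_n)$ for a suitable $\ov H$ (this is a volume/bigness argument, cf.\ Proposition~\ref{prop_htcomp}), and the geometric height is exactly $\frac{1}{d_n}\int_{C_n(\mathbb{C})}\omega$. So your discussion of bounding negative tropical terms via Riemann--Hurwitz is not where the genus hypothesis is actually needed.

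The real problem is your lower bound. You assert that non-isotriviality, equidistribution, and $g(C_n)=o(d_n)$ together force $\int_{C_n(\mathbb{C})}\omega_{\mc{F}}\gtrsim d_n$, but the only argument offered is the phrase ``a curve of genus $o(d_n)$ is too rigid to remain merely almost-tangent to $\mc{F}$.'' This is a heuristic, not a proof, and it is not clear it can be made into one along the lines you indicate. The submersivity of the Betti map (Andr\'e--Corvaja--Zannier, Gao) tells you something about the generic rank of $d\beta$ on $\mc{A}$, but nothing about how a \emph{specific} multisection of bounded genus must intersect the foliation transversally on a set of definite measure; nor does the equidistribution of $[C_n]/d_n$ as currents convert into a pointwise transversality estimate for $\omega|_{C_n}$. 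Since the upper bound holds unconditionally, your claimed lower bound would in fact prove that no non-isotrivial abelian scheme over a curve admits \emph{any} small generic sequence with $g(C_n)=o(d_n)$ --- which is exactly the theorem --- but you have not supplied the mechanism.

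The paper's route is quite different and is worth contrasting. Rather than competing global bounds on $\int_{C_n}\omega$, the paper works \emph{locally} near each boundary point $s_i\in\ov B\setminus B$. From $\int\omega=o(d_n)$ and a length--area inequality one extracts many small loops $\gamma_b$ around $s_i$ whose preimages in $C_n$ have small Betti length; a lift of such a loop of degree $m$ connects $x$ to $\rho_b(\gamma_b^m)(x)$ in the fiber, so smallness of the Betti length means many points of $C_n\cap\mc A_b$ are almost fixed by some power $\rho_b(\gamma_b^m)$ with $m\le q-1$. Equidistribution (Chen--Moriwaki) is then used not to measure transversality but to show this ``almost-fixed'' set has positive Haar measure in the fiber, forcing $\ord(s_i)\le q-1$. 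The genus hypothesis enters through a Riemann--Hurwitz count on the number of connected components of $\pi_n^{-1}(\gamma_b)$, which controls the integer $q$ above and yields the identity $\sum_i 1/\ord(s_i)=2g(\ov B)-2+N$. A short classification of the possible tuples $(\ord(s_i))_i$ then reduces, after finite base change, to $B\in\{\mb P^1,\mb A^1,\mb G_m,E\}$, where hyperbolicity of $\mathfrak{H}_g^+$ forces isotriviality.
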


\begin{remark}
By the Northcott property, for a sequence of distinct points $(x_n)_{n \geq 1}$ with bounded height, the degree $\deg(x_n)$ must tend to infinity as $n\to \infty$. Consequently, Theorem \ref{thm_main1} implies that if the abelian scheme $\pi: \mathcal{A} \to B$ is non-isotrivial, then for any generic and small sequence $(x_n)_{n \geq 1}$ in $A(\ov{K})$, the genus satisfies $g(C_n)\to \infty$ as $n\to \infty$. 
\end{remark}

Here, we say a sequence $(x_n)_{n\geq 1}$ in $A(\ov{K})$ is \emph{generic and small} if every infinite subsequence of $(x_n)_{n \geq 1}$ is Zariski dense in $A$, and if the N\'eron--Tate height satisfies $\lim\limits_{n\to \infty}\widehat{h}_L^{\ov{H}}(x_n)=0$. In this context, $\widehat{h}_L^{\ov{H}}$ is the N\'eron--Tate height associated to an ample symmetric line bundle $L$ on $A$ and a big and nef adelic line bundle $\ov{H}$ on $B$. The construction of these heights will be explained in Section \ref{Sec_ht}. 

By combining Theorem \ref{thm_main1} with the Mordell's conjecture and Bogomolov's conjecture, we prove the following result:

\begin{theorem}\label{thm_sec1}
Let $k$ be a finitely generated field over $\mb{Q}$, $B$ be a smooth curve over $k$, and $\pi: \mc{A} \to B$ be an abelian scheme. Assume $\mc{A} \to B$ contains no non-trivial isotrivial abelian subscheme. Let $P: B \to \mc{A}$ be a section. For any finite extension $k'/k$ and any integer $\ell$, there exists an integer $N \colonequals  N(\mc{A}, P, k, k', \ell)$ such that for any $b \in B(k')$, 
\[ \#\{z \in \mc{A}_b(k') : \ell^n z = P_b \text{ for some } n \geq 0 \} \leq N. \]
\end{theorem}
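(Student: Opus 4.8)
\emph{Proof proposal.} The plan is to argue by contradiction and by induction on the relative dimension $g \colonequals \dim \mathcal{A}_b$, the case $g=0$ being trivial. Several reductions are free. First, we may assume $\mathcal{A}$ is non-trivial, hence (applying the hypothesis to $\mathcal{A}$ itself, using Poincar\'e reducibility) that $\mathcal{A}\to B$ is non-isotrivial. Second, we may assume $g(B)\le 1$: if $g(B)\ge 2$ then $B(k')$ is finite by Faltings' theorem, and since $\mathcal{A}_b(k')$ is a finitely generated group the set in the statement is finite for each of the finitely many $b$, giving the bound at once. Third, we may enlarge $k'$ at will, since a uniform bound over a larger field restricts to one over $k'$. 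Finally, since the division points of a torsion section are torsion points, the case where $P_\eta$ is torsion is subsumed in the analysis of $\ell$-power torsion below (Step 3), so we may also assume $P_\eta\in A(K)$ is non-torsion, where $K\colonequals k(B)$ and $A\colonequals \mathcal{A}_K$.

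\textbf{Step 1: a small sequence of division curves.} For $m\ge 1$ put $Y_m\colonequals [\ell^m]^{-1}(P(B))\subset \mathcal{A}$. Since $[\ell^m]\colon \mathcal{A}\to\mathcal{A}$ is finite \'etale (we are in characteristic $0$) and $P(B)\cong B$ is smooth, $Y_m\to B$ is finite \'etale of degree $\ell^{2gm}$; thus $Y_m$ is a smooth curve, each connected component $C$ is a smooth curve finite \'etale over $B$ of some degree $d_C\le \ell^{2gm}$, and for the generic point $x$ of $C$ one has $\ell^m x=P_\eta$, $\deg(x)=d_C$, and, by the quadraticity of the N\'eron--Tate height in the symmetric direction, $\widehat h_L^{\ov H}(x)=\ell^{-2m}\,\widehat h_L^{\ov H}(P_\eta)\to 0$ as $m\to\infty$. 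Suppose that for infinitely many $m$ some component $C_m$ of $Y_m$ has $g(C_m)\le 1$, and let $x_m$ be its generic point. The $x_m$ are pairwise distinct (if $x_m=x_{m'}$ with $m\ne m'$ then $(\ell^m-\ell^{m'})x_m=0$, forcing $P_\eta$ torsion), so by the Northcott property $\deg(x_m)\to\infty$, whence $\liminf_m g(C_m)/\deg(x_m)=0$. If the sequence $(x_m)$ is generic, Theorem~\ref{thm_main1} then forces $\mathcal{A}\to B$ to be isotrivial, a contradiction; if it is not generic we are in the situation of Step 2. Hence, modulo Step 2, there is $m_0$ such that for all $m\ge m_0$ \emph{every} component of $Y_m$ has genus $\ge 2$.

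\textbf{Step 2: d\'evissage via Bogomolov.} Suppose an infinite subsequence of $(x_m)$ lies in a proper closed subset of $A_{\ov K}$; replacing it by the Zariski closure of a Galois-stable infinite sub-collection we obtain a proper $Z\subsetneq A_{\ov K}$, defined over $K$, in which small points are Zariski dense. By the Bogomolov conjecture (for $\widehat h_L^{\ov H}$), $Z$ is a torsion coset $x_0+A'$ with $A'\subsetneq A$ an abelian subvariety; the case $\dim Z=0$ is excluded because then all $x_m$ would equal the torsion point $x_0$, contradicting their distinctness, so $A'\ne 0$. From $\ell^m x_m=P_\eta$ and $x_m\in x_0+A'$ we get $P_\eta\equiv \ell^m x_0\pmod{A'}$, so the image of $P_\eta$ in $(A/A')(\ov K)$ is torsion. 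Spreading $A'$ out to an abelian subscheme $\mathcal{A}'\subset\mathcal{A}$, proper over $B$, and enlarging $k'$ so that the finitely many torsion points involved are $k'$-rational, one bounds $\#\{z\in\mathcal{A}_b(k'):\ell^n z=P_b,\ \exists n\}$ in terms of the analogous quantity for $(\mathcal{A}/\mathcal{A}',\ov P)$ with $\ov P$ a torsion section and of that for $(\mathcal{A}',P-v)$ for a suitable section $v$ of $\mathcal{A}'$, both of strictly smaller relative dimension and both inheriting the no-isotrivial-subscheme hypothesis; the inductive hypothesis applies. This settles the non-generic case.

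\textbf{Step 3: the torsion case, and assembling the bound.} Running the scheme of Steps 1--2 with $\mathcal{A}[\ell^e]$ in place of $Y_m$ (a $k'$-point of $\mathcal{A}_b$ of exact order $\ell^e$ is a $k'$-point over $b$ of a component of $\mathcal{A}[\ell^e]\setminus\mathcal{A}[\ell^{e-1}]$; torsion points have height $0$ and the least-genus such components give distinct points of degree $\to\infty$), one concludes, after the same dichotomy, that for $e$ large these components all have genus $\ge 2$; hence by Faltings only finitely many $b\in B(k')$ carry a $k'$-point of order exceeding $\ell^{e_0}$ for some $e_0$, and since an $\ell$-group of exponent $\le \ell^{e_0}$ inside $(\mb Q_\ell/\mb Z_\ell)^{2g}$ has order $\le \ell^{2ge_0}$, we get $T\colonequals \sup_{b\in B(k')}\#\mathcal{A}_b(k')[\ell^\infty]<\infty$. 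Now return to general non-torsion $P$: for $m\ge m_0$ every component of $Y_m$ has genus $\ge 2$ by Step 1, so $Y_m(k')$ is finite by Faltings, and therefore $F\colonequals \{\,b\in B(k'):\exists z\in\mathcal{A}_b(k'),\ \ell^m z=P_b\,\}$ is finite. For $b\notin F$ every solution of $\ell^n z=P_b$ has $n<m$, and at each level $n$ the solution set is a coset of $\mathcal{A}_b(k')[\ell^n]$, so $\#\{z:\exists n,\ \ell^n z=P_b\}\le mT$; for $b\in F$ the set is finite. Taking $N$ to be the maximum of $mT$ and $\max_{b\in F}(\cdot)$ finishes the proof.

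\textbf{Expected main obstacle.} The technical heart is the dichotomy feeding Theorem~\ref{thm_main1}: one must know that the sequences of division (resp. torsion) curves produced from a hypothetical counterexample are genuinely generic, and the input that handles the non-generic alternative is a Bogomolov-type statement for the adelic height $\widehat h_L^{\ov H}$ — essentially a relative Bogomolov conjecture — so the delicate point is to verify that the hypotheses of the equidistribution/Bogomolov machinery match exactly what the counterexample supplies. The d\'evissage of Step 2 is routine but bookkeeping-heavy (keeping track of torsion translates when passing to $\mathcal{A}'$ and $\mathcal{A}/\mathcal{A}'$); everything else — the \'etaleness of $Y_m\to B$, the genus computation, the Northcott and Faltings inputs — is soft.
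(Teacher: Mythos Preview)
Your overall architecture is the paper's: manufacture a small sequence from $\ell$-division points of $P$, feed the generic case to Theorem~\ref{thm_main1}, invoke Bogomolov in the non-generic case, and finish with Faltings once the relevant curves have genus $\ge 2$. The gap is in Step~2. You claim the non-generic case reduces by induction to $(\mathcal{A}/\mathcal{A}',\ov P)$ and to $(\mathcal{A}',P-v)$ ``for a suitable section $v$ of $\mathcal{A}'$''; but if $v$ lies in $\mathcal{A}'$ then $P-v$ projects to $\ov P\neq 0$ in $\mathcal{A}/\mathcal{A}'$, so $P-v$ is not a section of $\mathcal{A}'$ at all. More structurally, the set $\{z:\ell^n z=P_b\text{ for some }n\}$ does not fiber over its image in $(\mathcal{A}/\mathcal{A}')_b$ as a single translated problem for $\mathcal{A}'$: the exponent $n$ varies with $z$, and there is no canonical splitting of $0\to\mathcal{A}'\to\mathcal{A}\to\mathcal{A}/\mathcal{A}'\to 0$ to play against. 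The d\'evissage you call routine is therefore not available as written, and since your Step~3 re-runs the same dichotomy on the torsion schemes $\mathcal{A}[\ell^e]$, it inherits the same gap.

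The paper sidesteps this entirely. Once Bogomolov produces $V=a+A_1$ with $a$ torsion of order $m$, one does \emph{not} pass to the quotient but instead multiplies the sequence by $m$: the points $y_n\colonequals m\cdot x_n$ lie in $A_1$, are generic there, still satisfy $g(C_{y_n})\le g(C_{x_n})$ (the map $[m]$ gives a surjection $C_{x_n}\to C_{y_n}$), and still have $\widehat h_L^{\ov H}(y_n)\to 0$. Applying Theorem~\ref{thm_main1} to the abelian subscheme $\mathcal{A}_1\to B$ forces $\mathcal{A}_1$ isotrivial, an immediate contradiction with the hypothesis---no quotient, no induction. This is packaged as Theorem~\ref{thm_bogotype}: the set $\{x\in A(\ov K):g(C_x)\le N,\ \widehat h_L^{\ov H}(x)\le\varepsilon\}$ is finite outright. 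The paper then takes the whole rooted tree $T=\{x:\ell^n x=x_0\text{ for some }n,\ g(C_x)\le 1\}$, which is finite at once by Theorem~\ref{thm_bogotype}, and lets the finite boundary $S=\{x:\ell x\in T,\ x\notin T\}$ (all of genus $\ge 2$) play the role of your single level $Y_{m_0}$; Faltings on the $C_x$ for $x\in S$ gives the finite exceptional set of fibers, and the rest is counting. Your Steps~1 and~3 are a sound level-by-level version of this assembly and would go through once Step~2 is replaced by the multiply-by-$m$ trick.
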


Note that if $P$ is a non-torsion section, the assumption that $\mc{A} \to B$ contains no non-trivial isotrivial abelian subscheme is essential.

\begin{example}\label{exam_nontors}
Let $E$ be an elliptic curve over $k$ with $\operatorname{rank} E(k) \geq 1$. Consider the abelian scheme $\mc{A} \colonequals  E \times_k E$ and let $\pi: \mc{A} \to E$ be the projection onto the second factor. Let $P: E \to \mc{A}$ be the diagonal section. 

Since $\mr{rank\,}E(k)\geq 1$, for any $N > 0$, we can find a point $b \in E(k)$ of the form $b = \ell^N \cdot b_0$ for some non-torsion element $b_0 \in E(k)$. There are at least $N$ distinct points $z \in E(k)$ such that $\ell^n z = b$ for some $n \geq 0$. Thus, the set in Theorem \ref{thm_sec1} cannot be bounded uniformly for all $b\in E(k)$.
\end{example}

If $P$ is a torsion section, we may reduce to the case where $P$ is the zero section. Due to the uniform boundedness of torsion points on fibers of isotrivial abelian schemes, we can prove Theorem \ref{thm_sec1} without the non-isotrivial subscheme assumption. This yields a new proof of Theorem \ref{thm_CT}.

Our Theorem \ref{thm_main1} implies a conjecture proposed by Cadoret and Tamagawa in \cite{CT11}. For a positive integer $n$, let $A[n]^{\times}$ denote the set of torsion points in $A(\ov{K})$ of exact order $n$.

\begin{conjecture}\label{conj_CT}
Let $k$ be a finitely generated field over $\mb{Q}$, $B$ be a smooth curve over $k$, and $\pi: \mc{A} \to B$ be an abelian scheme. Assume $\mc{A} \to B$ contains no non-trivial isotrivial abelian subscheme. Let $g(n) \colonequals  \min\limits_{x \in A[n]^{\times}} g(C_{x})$, where $C_x$ is the Zariski closure of $x$ in $\mc{A}$. Then 
\[ \lim_{n \to \infty} g(n) = +\infty. \]
\end{conjecture}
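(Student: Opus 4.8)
The plan is to deduce the conjecture from Theorem \ref{thm_main1} by contradiction. Suppose the conjecture fails, so that $\liminf_{n\to\infty} g(n) < +\infty$; passing to a subsequence of integers $n$, we may pick for each such $n$ a torsion point $x_n \in A[n]^{\times}$ realizing (or nearly realizing) the minimum $g(n)$, so that $g(C_{x_n})$ stays bounded by some constant $M$ as $n \to \infty$. The sequence $(x_n)_{n\geq 1}$ consists of torsion points of strictly increasing exact order, so in particular the $x_n$ are pairwise distinct and the N\'eron--Tate height $\widehat{h}_L^{\ov{H}}(x_n) = 0$ for all $n$ (torsion points have height zero); hence $(x_n)_{n\geq 1}$ is automatically small. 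The remaining point needed to invoke Theorem \ref{thm_main1} is that $(x_n)_{n\geq 1}$ is generic, i.e.\ every infinite subsequence is Zariski dense in $A$.

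Establishing genericity is the step I expect to be the main obstacle, and it is here that the hypothesis that $\mc{A}\to B$ contains no non-trivial isotrivial abelian subscheme must be used. The issue is that an infinite subsequence of torsion points could a priori accumulate on a proper subvariety of $A$. The key tool is the Manin--Mumford / Bogomolov-type structure theorem (over the function field $K$, or after spreading out) describing the Zariski closure of a set of torsion points: any such closure is a finite union of torsion translates of abelian subschemes of $\mc{A}\to B$. If an infinite subsequence $(x_{n_j})$ were contained in a proper closed subset, then infinitely many of the $x_{n_j}$ would lie in a single torsion translate $t + \mc{B}$ of a fixed abelian subscheme $\mc{B}\subsetneq \mc{A}$ of positive codimension. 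After translating by $-t$ (which preserves being a torsion point of comparable order and changes the closure $C_{x_n}$ only up to a finite map, hence changes the genus boundedly, using Riemann--Hurwitz), we would obtain infinitely many torsion points of unbounded exact order inside $\mc{B}$ whose Zariski closures have bounded genus. If $\mc{B}$ itself is non-isotrivial we recurse; if every abelian subscheme arising this way is isotrivial, we contradict the no-isotrivial-subscheme hypothesis, unless $\mc{B} = 0$, in which case the $x_{n_j}$ are eventually constant, contradicting that they have distinct exact orders. Thus no infinite subsequence can be non-dense, so $(x_n)$ is generic.

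With $(x_n)_{n\geq 1}$ generic and small in hand, Theorem \ref{thm_main1} applies: since $g(C_{x_n}) \leq M$ is bounded while $\deg(x_n) \to \infty$ (the degree over $K$ of a torsion point of exact order $n$ grows, by a function-field analogue of the fact that $[\,K(A[n]):K\,]\to\infty$ for a non-isotrivial abelian scheme, which itself follows from the no-isotrivial-subscheme hypothesis together with the openness of the image of Galois / monodromy), we get
\[
\liminf_{n\to\infty}\frac{g(C_{x_n})}{\deg(x_n)} = 0,
\]
so Theorem \ref{thm_main1} forces $\mc{A}\to B$ to be isotrivial --- contradicting, a fortiori, the assumption that it has no non-trivial isotrivial abelian subscheme (an isotrivial abelian scheme of positive relative dimension is its own non-trivial isotrivial subscheme). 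This contradiction proves $\lim_{n\to\infty} g(n) = +\infty$. The one technical gap to fill carefully is the claim $\deg(x_n)\to\infty$: one must rule out that a positive-dimensional family of torsion points of bounded degree and unbounded order exists, which again is exactly where non-isotriviality enters, via the fact that the geometric monodromy acts with open image on the Tate modules of the non-isotrivial part.
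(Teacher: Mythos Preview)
Your overall strategy is correct and matches the paper's: pass to a subsequence accumulating on a torsion translate $a + A_1$ of an abelian subvariety (via Manin--Mumford/Bogomolov for the function field $K$), push the sequence into $A_1$, and then apply Theorem~\ref{thm_main1} to conclude that $\mc{A}_1$ is isotrivial, contradicting the hypothesis. Two of the points you flag as uncertain are handled more cleanly in the paper than in your sketch.

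First, instead of translating by $-t$, the paper \emph{multiplies} by an integer $m$ annihilating the torsion point $a$, setting $y_n \colonequals m\cdot x_n \in A_1$. This is better because $[m]$ is an endomorphism of $\mc{A}$ defined over $B$ (no base change needed), and it induces a surjection $C_{x_n}\twoheadrightarrow C_{y_n}$ of curves, giving the clean inequality $g(C_{y_n})\leq g(C_{x_n})\leq M$ with no Riemann--Hurwitz bookkeeping. Your translation approach can be made to work after base-changing to a cover of $B$ where $t$ becomes a section, but it is messier.

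Second, your worry about $\deg(x_n)\to\infty$ is unnecessary: once you have infinitely many \emph{distinct} points of height zero, the Northcott property (Proposition~\ref{prop_ht}(1)) immediately forces the degrees to be unbounded. No monodromy or Galois-image argument is required. The paper notes this explicitly in the remark following Theorem~\ref{thm_main1}.

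Finally, note that you do not need to prove that $(x_n)$ is generic in $A$ itself; it suffices to extract a subsequence generic in some positive-dimensional $A_1$ and apply Theorem~\ref{thm_main1} there. The paper packages this as the Bogomolov-type Theorem~\ref{thm_bogotype}, from which the conjecture is an immediate corollary.
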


In \cite{CT12}, Cadoret and Tamagawa proved $\lim\limits_{n \to \infty} g(\ell^n) = +\infty$ for any prime $\ell$. In \cite{CT11}, they also prove conjecture under the assumption that either $g(B) \geq 1$ or $\mc{A} \to B$ has semistable reduction over all but possibly one point in $\ov{B} \setminus B$. In \cite{EHK12}, Ellenberg, Hall, and Kowalski show that for an infinite sequence of finite \'etale covers of a smooth connected curve, the sequence of genera tends to infinity, provided that the associated topological fundamental groups satisfy certain combinatorial expansion properties. Using our geometric approach, we prove Conjecture \ref{conj_CT} without these additional assumptions. 

\subsection{Ideas of proofs}
The majority of this article is devoted to the proof of Theorem \ref{thm_main1}. Here we describe our ideas to prove Theorem \ref{thm_main1}, which mainly uses the theory of Betti foliations and the arithmetic equidistribution theorem.

For simplicity, let us assume here that $k$ is a number field. For any $b \in B(\mathbb{C})$, the Betti foliation induces a monodromy representation
\[
\rho_b: \pi_1(B(\mathbb{C}), b) \to \mathrm{GL}_{2g}(\mathbb{Z}).
\]
For a loop $\gamma$ based at $b$, the element $\rho_b(\gamma)$ acts as an automorphism on the real Lie group $\mathcal{A}_b(\mathbb{C})$. Let $\overline{B}$ be the smooth compactification of $B$, and let $s \in \overline{B} \setminus B$ be a boundary point. By choosing a base point $b$ near $s$ and a simple loop $\gamma \in \pi_1(B(\mathbb{C}), b)$ around $s$, the monodromy $\rho_b(\gamma)$ characterizes the reduction of the abelian scheme. Specifically, $\rho_b(\gamma)$ is trivial if and only if $\mathcal{A}$ has good reduction at $s$. Furthermore, if we define $\ord(s)$ as the order of $\rho_b(\gamma)$ in $\mathrm{GL}_{2g}(\mathbb{Z})$, then $\operatorname{ord}(s)$ is finite if and only if $\mathcal{A}$ has potentially good reduction at $s$.

Our proof proceeds in three main steps. 

\textbf{ Step 1.} The existence of multi-sections $C_n$ with small heights allows us to find sufficiently many points $z$ on a fiber $\mc{A}(\mb{C})_b$ such that  $\rho_b(\gamma^m)(z)\approx z$ for some integer $m\geq 1$. Then, by the arithmetic equidistribution theorem of Chen--Moriwaki \cite[Theorem F]{CM24}, the set of points $z \in \mathcal{A}_b(\mathbb{C})$ fixed by the action of $\rho_b(\gamma^m)$ has positive Haar measure, which implies that $\rho_b(\gamma^m)$ must be the identity map. Consequently, we obtain $\operatorname{ord}(s) \leq m$. If we bound the integer $m$ from above, we may give an upper bound of $\ord(s)$.
 
\textbf{Step 2.} We use a counting argument that derived from the Riemann--Hurwitz formula to bound the integer $m$ in the previous step. Under the assumption that $\liminf\limits_{n\to\infty}\dfrac{g(C_n)}{\deg(x_n)}=0$, we show that $\ord(s)$ is finite for all but possibly one boundary point $s \in \overline{B} \setminus B$. Furthermore, it satisfies the following equation:
\[
    \sum_{s \in \overline{B} \setminus B} \frac{1}{\operatorname{ord}(s)} = 2g(\overline{B}) - 2 + \#(\overline{B} \setminus B).
\]

\textbf{Step 3.} Using the equation above, we explicitly determine the possible values for the tuple $(\operatorname{ord}(s))_{s \in \overline{B} \setminus B}$. By taking a suitable base change, we reduce the problem to cases where $B$ is isomorphic to $\mathbb{A}^1, \mathbb{G}_m, \mathbb{P}^1$, or an elliptic curve. A hyperbolicity argument then allows us to prove that the abelian scheme $\pi: \mathcal{A} \to B$ is isotrivial.

\subsection{Notations and Terminology}
By a \emph{variety}, we mean a separated, integral scheme of finite type over a base field. A \emph{curve} is defined as a variety of dimension one.

By a \emph{line bundle} on a scheme, we mean an invertible sheaf. For any line bundles $L, M$ and integers $a, b$, the notation $aL - bM$ denotes the tensor product $L^{\otimes a} \otimes M^{\otimes (-b)}$.  For the theory of \emph{adelic line bundles}, we refer to \cite{YZ21}.

An \emph{abelian scheme} $\mc{A}$ over a scheme $B$ is a proper, smooth group scheme over $B$ whose geometric fibers are connected. 

For a variety $X$ over $\mb{Q}$, we denote $X(\mb{C})$ the set of $\mb{C}$-points of $X\otimes_{\mb{Q}}\mb{C}$, which admits the structure of a complex analytic space. If $X$ is moreover smooth, then $X(\mb{C})$ is a complex manifold. In particular, when $X$ is a smooth projective curve, $X(\mb{C})$ is a finite union of (connected) compact Riemann surfaces.

For a curve $C$ over a field $k$ of characteristic $0$, the \emph{genus} $g(C)$ refers to the geometric genus of its smooth compactification $\widetilde{C}$. Specifically, $\widetilde{C}$ is the projective model of the normalization of $C$, which is a smooth, projective curve. 

\noindent(Fix an embedding $k \hookrightarrow \mb{C}$. If $C$ is not geometrically integral, $C \otimes_k \mb{C}$ decomposes into irreducible components $C_1, \dots, C_r$, which are curves over $\mb{C}$. We have
\[
g(C) = \sum_{i=1}^r g(C_i),
\]
which is compatible with our definition.)

For a Riemann surface $C$ with boundary, its \emph{genus} is defined as 
\[g(C)\colonequals\dfrac{2-b(C)-\chi(C)}{2},\] where $b(C)$ denotes number of boundary components of $C$ and $\chi(C)$ is the Euler characteristic of $C$. In particular, for a closed disk $D\subset \mb{C}$, we have $g(D)=0$.

For a point $x \in \mb{C}$ and a positive real number $r$, we denote the open disk of radius $r$ centered at $x$ by
\[
D(x,r) \colonequals \{z \in \mb{C} : |z-x| < r\} \subset \mb{C}.
\]
For a point $x = (x_1, \dots, x_n) \in \mb{C}^n$, we denote the polydisk of radius $r$ by
\[
\mb{D}(x,r) \colonequals D(x_1, r) \times \cdots \times D(x_n, r) \subset \mb{C}^n.
\]

\subsection*{Acknowledgements}
We would like to thank Xinyi Yuan and Wenbin Luo for some useful discussions. Zhuchao Ji is supported by National Key R\&D Program of China (No.2025YFA1018300), NSFC Grant (No.12401106), and ZPNSF grant (No.XHD24A0201). Junyi Xie is supported by the National Natural Science Foundation of China Grant No. 12271007.

\section{Preliminaries}\label{Sec_pre}
\subsection{The moduli space of abelian varieties}
Let $D=\mr{diag}(d_1,d_2,\dots, d_g)\in M_{g\times g}(\mb{Z})$ be a diagonal matrix with $d_1,d_2,\dots,d_g$ positive integers such that $d_1\mid d_2\mid \dots \mid d_g$. For an integer $N\geq 3$, let $\mb{A}_{g, D}(N)$ be the moduli space of abelian varieties of dimension $g$ with polarization type $D$ and level-$N$-structure. For simplicity, we denote it by $\mb{A}_g$ without ambiguity. More precisely, consider the Siegel upper half space defined by
\[\mathfrak{H}_g^+\colonequals \{Z=X+\sqrt{-1} Y\in M_{g\times g}(\mb{C})\mid Z=Z^{\top}, Y\text{ is positive definite}\}.\]
Then this gives a uniformization $u\colon \mathfrak{H}_g^+\ra \mb{A}_g$. It is known that $\mb{A}_g(\mb{C})$ can be identified with the quotient $\Gamma\backslash \mathfrak{H}_g^+$ for a suitable congruence subgroup $\Gamma $ of $\mr{Sp}_{g}(\mb{Z})$. 

We also know that $\mb{A}_g$ is a fine moduli space, which admits a universal family $\pi\colon\mathfrak{A}_g\ra \mb{A}_{g}$. 

Let $B$ be is a smooth quasi-projective variety over $\mathbb{C}$ and $\pi_B\colon\mc{A}\ra B$ be an abelian scheme of relative dimension $g$, where $g\geq 1$ is an integer. Up to taking a finite \'etale covering of $B^{\prime}\ra B$ and the base change $\mc{A}^{\prime}=\mc{A}\times_B B^{\prime}$, there is a modular map  
\[\begin{tikzcd}
\mc{A} \arrow[r] \arrow[d, "\pi_B"'] & \mathfrak{A}_g \arrow[d, "\pi"] \\
B \arrow[r, "\iota_B"]               &\mb{A}_{g}                            
\end{tikzcd}.\]
\begin{definition}
 An abelian scheme $\mc{A}\to B$ is said to be \emph{isotrivial} if one of the following equivalent conditions holds:
 \begin{enumerate}
  \renewcommand{\labelenumi}{(\roman{enumi})}
     \item The fibers $\mc{A}_b$ are isomorphic to each other for all $b\in B(\mb{C})$;
     \item There exists a finite covering $B^{\prime}\to B$ such that $\mc{A}\times_B B^{\prime}$ is a constant abelian scheme, namely $\mc{A}\times_B B^{\prime}\simeq A\times B^{\prime}$ for some abelian variety $A$ over $\mb{C}$.
     \item The image of the modular map $\iota_B: B\to \mb{A}_g$ induced by $\mc{A}\to B$ is a point. 
 \end{enumerate}
\end{definition}

 For some particular base $B$, the abelian scheme must be isotrivial. 

\begin{proposition}\label{prop_isotriv}
If $B=\mb{P}^1,\mb{A}^1, \mb{G}_m$ or an elliptic curve, then the abelian scheme $\mc{A}\ra B$ is isotrivial.
\end{proposition}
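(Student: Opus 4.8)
The plan is to exploit the modular interpretation of $\mc{A}\to B$ together with the fact that the Siegel domain $\mathfrak{H}_g^+$ is biholomorphic to a bounded domain. After a finite \'etale base change $B'\to B$ we obtain a modular map $\iota_{B'}\colon B'\to\mb{A}_g$ for $\mc{A}'=\mc{A}\times_B B'$, and by condition (iii) of the definition it suffices to show that $\iota_{B'}$ is constant, provided we first observe that isotriviality descends from $\mc{A}'\to B'$ to $\mc{A}\to B$. The latter is routine: if a further finite cover $B''\to B'$ trivializes $\mc{A}'$, then $B''\to B$ is again finite and trivializes $\mc{A}$, so $\mc{A}\to B$ is isotrivial by condition (ii).

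The next step is to pass to universal covers. Since $N\geq 3$, the congruence subgroup $\Gamma\subset\mathrm{Sp}_g(\mb{Z})$ is torsion-free (Minkowski--Serre), hence acts freely on $\mathfrak{H}_g^+$, so $u\colon\mathfrak{H}_g^+\to\mb{A}_g(\mb{C})$ is a covering map; as $\mathfrak{H}_g^+$ is contractible, it is the universal cover. Let $p\colon\wt{B'}\to B'(\mb{C})$ be the universal cover of the Riemann surface $B'(\mb{C})$. Because $\wt{B'}$ is simply connected, the holomorphic map $\iota_{B'}\circ p\colon\wt{B'}\to\mb{A}_g(\mb{C})$ lifts through $u$ to a holomorphic map $\wt{\iota}\colon\wt{B'}\to\mathfrak{H}_g^+$, and since $p$ is surjective it is enough to prove that $\wt{\iota}$ is constant.

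Finally I would identify $\wt{B'}$. In each of the four cases every connected finite \'etale cover of $B$ has the same shape — namely $B'\cong\mb{P}^1$, $\mb{A}^1$, $\mb{G}_m$, or an elliptic curve respectively — so $\wt{B'}$ is either $\mb{P}^1$ (compact) or $\mb{C}$. On the other hand, the Cayley transform realizes $\mathfrak{H}_g^+$ as a bounded domain in $\mb{C}^{g(g+1)/2}$. If $\wt{B'}=\mb{P}^1$, every coordinate function of $\wt{\iota}$ is holomorphic on a compact connected complex manifold, hence constant; if $\wt{B'}=\mb{C}$, every coordinate function is a bounded entire function, hence constant by Liouville's theorem. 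Either way $\wt{\iota}$, and therefore $\iota_{B'}$, is constant, which completes the proof. I do not expect a genuine obstacle: the only points requiring care are the torsion-freeness of $\Gamma$ (which is exactly why the level-$N$-structure with $N\geq 3$ was imposed, ensuring $u$ is honestly a covering) and the bookkeeping of the two successive finite covers when descending isotriviality back to $B$.
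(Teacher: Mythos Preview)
Your argument is correct and follows essentially the same route as the paper: lift the modular map through the universal cover and invoke Liouville together with the boundedness of the Siegel domain. The paper is terser---it handles $\mb{P}^1$ by deleting a point and reducing to the $\mb{A}^1$ case rather than by compactness, and it silently absorbs the finite \'etale base change and the torsion-freeness of $\Gamma$ into the ambient setup---but your explicit treatment of these bookkeeping points is entirely appropriate and the core idea is identical.
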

\begin{proof}
When $B=\mb{A}^1, \mb{G}_m$ or an elliptic curve, there is a holomorphic universal covering $p\colon \mb{C}\ra B(\mb{C})$. The modular map $\iota_B\colon B\ra \mb{A}_{g}$ lifts to a holomorphic map $\wt{\iota_B}\colon\mb{C}\ra \mathfrak{H}_g^+$. Since the Siegel upper half space is biholomorphic to a bounded domain, the map $\wt{\iota_B}$ is constant, which implies the abelian scheme $\mc{A}\ra B$ is isotrivial.

When $B=\mb{P}^1$, for any $s\in B$, $U\colonequals B\setminus\{s\}\simeq \mb{A}^1$. So the abelian scheme $\mc{A}_U\colonequals \mc{A}\times_BU\to U$ is isotrivial. Thus $\mc{A}\to B$ is isotrivial.
\end{proof}

\subsection{The Betti foliation}
We assume $B$ is a smooth quasi-projective variety over $\mb{C}$ and $\pi\colon\mathcal{A}\ra B$ is a holomorphic family of abelian varieties over $B$, which is a smooth holomorphic map endowed with a holomorphic section $e\colon B\ra \mathcal{A}$. Every fiber of $\pi$ is an abelian variety with the identity point induced by $e$. 

Let $b\in B$ be a point and $U$ be a connected and simply connected open neighborhood of $b$ in $B$. Denote $\pi_U: \mathcal{A}_U\colonequals\mathcal{A}\times_BU\ra U$ the base change of $\pi$. The \emph{Betti map} \[\beta=\beta_{b,U}\colon \mathcal{A}_U\ra \mathcal{A}_b\]
is a canonical real analytic map satisfying the following properties:
\begin{enumerate}[(1)]
\item  The composition $\mathcal{A}_b \hookrightarrow \mathcal{A}_U \xrightarrow{\beta} \mathcal{A}_b$ is the identity map.

\item  For any point $b^{\prime} \in U$, the composition $\mathcal{A}_{b^{\prime}} \hookrightarrow \mathcal{A}_U \xrightarrow{\beta} \mathcal{A}_b$ is an isomorphism of real Lie groups.

\item  The induced map

$$
\wt{\beta}=(\beta, \pi)\colon \mathcal{A}_U \longrightarrow \mathcal{A}_b \times U
$$

is a real analytic diffeomorphism of manifolds.

\item For any $x \in \mathcal{A}_U$, the fiber $\beta^{-1}(\beta(x))$ is a complex analytic subset of $\mathcal{A}_U$, which is biholomorphic to $U$.

\end{enumerate}

The Betti map can be constructed as follows. Fix a basis of $H_1(\mc{A}_b,\mb{Z})$. Since $U$ is simply-connected, we can extend it by continuity to all fibers above $U$. Then we get canonical isomorphisms $H_1(\mc{A}_{b^{\prime}},\mb{Z})\ra H_1(\mc{A}_b,\mb{Z})$ for all $b^{\prime}\in U$. The isomorphism in (2) can be defined by the composition
\[\mathcal{A}_{b^{\prime}} \longrightarrow H_1\left(\mathcal{A}_{b^{\prime}}, \mathbb{R}\right) / H_1\left(\mathcal{A}_{b^{\prime}}, \mathbb{Z}\right) \longrightarrow H_1\left(\mathcal{A}_b, \mathbb{R}\right) / H_1\left(\mathcal{A}_b, \mathbb{Z}\right) \longrightarrow \mathcal{A}_b,\]
which determines the Betti map $\beta_{b,U}$. 

For a point $x\in \mc{A}_U$, the fiber $\mc{F}_{x,U}\colonequals \beta_{b,U}^{-1}\left(\beta_{b,U}(x)\right)$ is called the \emph{local Betti leaf} at $x$ over $U$. It is independent of the choice of $b\in U$, and the germ $\mc{F}_x$ is independent of the choice of $U$. A \emph{Betti leaf} is a connected subset $\mc{F}_0$ of $\mc{A}$ whose germ at $x\in \mc{F}_0$ is $\mc{F}_x$. This forms a \emph{Betti foliation}.

We have the following basic properties for the Betti foliation.

\begin{itemize}
  \item In general, the Betti map $\beta_{b, U}$ is not holomorphic, but the local Betti leaf $\mc{F}_{x, U}$ is a complex submanifold of $\mc{A}_U$.
    \item The Betti leaf $\mc{F}$ is transverse to the fibers of $\pi: \mc{A}\ra B$ and the map $\pi|_{\mc{F}_0}\colon \mc{F}_0\to B$ is a regular holomorphic covering map. 
    \item Any connected component of a torsion multi-section of $\pi\colon \mc{A}\to B$ is a Betti leaf.
\end{itemize}

Let $\gamma\colon [0,1]\ra B$ be a loop in $B$ with base point $b$, moves along $\gamma(t)$ from $t=0$ to $t=1$, a basis of $H_1(\mc{A}_b, \mb{Z})$ transform to a second basis of $H_1(\mc{A}_b, \mb{Z})$, which gives an element of $\mr{Aut}\left(H_1(\mc{A}_b, \mb{Z})\right)\simeq\mr{GL}_{2g}(\mb{Z})$. This induces a monodromy 
\[\rho_b: \pi_1(B,b)\ra \mr{GL}_{2g}(\mb{Z}).\]
Note that the monodromy $\rho_b(\gamma)$ also gives a linear transfromation of $H_1(\mc{A}_b,\mb{R})$ which preserve the lattice $H_1(\mc{A}_b, \mb{Z})$, so it gives a linear diffeomorphism of $H_1(\mc{A}_b,\mb{R})/H_1(\mc{A}_b, \mb{Z}) \cra \mc{A}_b$. There is another way to describe this diffeomorphism. Let $x\in \mc{A}_b$ be a point and $\mc{F}_0$ be the Betti leaf containing $x$. The loop $\gamma$ lifts to a unique path
\[\wt{\gamma}_x\colon [0,1]\to \mc{F}_0\]
such that $\pi\circ \wt{\gamma}_x=\gamma$ and $\wt{\gamma}_x(0)=x$. Then the map
\[\mc{A}_b\ra \mc{A}_b,\quad x\mapsto \wt{\gamma}_x(1)\]
is just the linear diffeomorphism of $\mc{A}_b$ induced by $\rho_b(\gamma)$. 

We have the following monodromy theorem.

\begin{proposition}\label{prop_monotriv}
Suppose $B$ is a Riemann surface and $\pi\colon \mc{A}\ra B$ is an abelian scheme. 
Let $\overline{B}$ be the compactification of $B$. Let $s\in \overline{B}\setminus B$ be a point with a neighborhood $D\simeq D(0,1)$, and set $D^*\colonequals D\cap B$. For a point $b\in U_0$, if the local monodromy
\[\rho_b:\pi_1(D^*,b)\to \mr{GL}_{2g}(\mb{Z})\]
is trivial, then the abelian scheme $\pi\colon \mc{A}\ra B$ has good reduction at $s$.
\end{proposition}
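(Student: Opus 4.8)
The plan is to show that the period map of the family extends holomorphically across the puncture $s$ with values in the moduli space $\mb{A}_g$ itself, and then to obtain the good reduction by pulling back the universal abelian scheme $\mathfrak{A}_g$.

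After shrinking $D$ it suffices to produce a holomorphic family of abelian varieties $\mc{A}^{\prime}\to D$ whose restriction to $D^{\ast}$ is $\mc{A}|_{D^{\ast}}$, which is exactly what ``good reduction at $s$'' means. Since $\rho_b$ is trivial, the induced monodromy on $H_1(\mc{A}_b,\mb{Z}/N)$ is trivial for every $N$, so a level-$N$-structure on $\mc{A}_b$ (with $N\geq 3$) propagates monodromy-invariantly to a level-$N$-structure over all of $D^{\ast}$ — no base change is needed — and the polarization type, being locally constant, is constant on the connected $D^{\ast}$. Hence we obtain a modular map $\iota\colon D^{\ast}\to\mb{A}_g(\mb{C})=\Gamma\backslash\mathfrak{H}_g^+$ with $\iota^{\ast}\mathfrak{A}_g\simeq\mc{A}|_{D^{\ast}}$, where for $N\geq 3$ the group $\Gamma$ is torsion free and $u\colon\mathfrak{H}_g^+\to\Gamma\backslash\mathfrak{H}_g^+$ is a genuine covering map. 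The monodromy of $\iota$, i.e. the composite $\pi_1(D^{\ast},b)\to\Gamma$, is again $\rho_b$, hence trivial; since $\mathfrak{H}_g^+$ is simply connected (indeed convex), the lifting criterion yields a holomorphic lift $\wt{\iota}\colon D^{\ast}\to\mathfrak{H}_g^+$.

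Next I would use that $\mathfrak{H}_g^+$ is biholomorphic, via the Cayley transform, to a bounded domain $\Omega\subset\mb{C}^{g(g+1)/2}$. Then $\wt{\iota}$ becomes a bounded holomorphic map $D^{\ast}\to\mb{C}^{g(g+1)/2}$, and Riemann's removable singularity theorem (applied in each coordinate) extends it to a holomorphic map $\ov{\iota}\colon D\to\ov{\Omega}$. The crucial point, which I expect to be the main obstacle, is that $\ov{\iota}(s)$ lies in the \emph{open} part $\Omega$ — equivalently, in $\mathfrak{H}_g^+$ — and not merely in $\partial\Omega$: a priori the periods could run to the boundary as one approaches $s$, exactly as for a Tate curve, whose monodromy is a nontrivial unipotent matrix. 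This is where the hypothesis of \emph{trivial} monodromy (not merely finite, nor merely unipotent) is essential, and it is not a formal statement. I would deduce it from the N\'eron–Ogg–Shafarevich criterion applied to the local ring of $\ov{B}$ at $s$: the $\ell$-adic monodromy of $\mc{A}|_{D^{\ast}}$ on the Tate module $T_{\ell}$ is $\rho_b\otimes\mb{Z}_{\ell}$, hence trivial, so $\mc{A}|_{D^{\ast}}$ has good reduction at $s$; equivalently, the limiting Hodge structure remains pure and polarized, forcing $\ov{\iota}(s)\in\mathfrak{H}_g^+$. (Alternatively one may invoke Schmid's nilpotent orbit theorem: the logarithm of the unipotent part of the monodromy is $N=0$, so the limiting mixed Hodge structure is pure, which again gives $\ov{\iota}(s)\in\mathfrak{H}_g^+$.)

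Finally, with $\ov{\iota}\colon D\to\mb{A}_g(\mb{C})$ holomorphic, the pullback $\mc{A}^{\prime}\colonequals\ov{\iota}^{\ast}\mathfrak{A}_g\to D$ is a holomorphic family of abelian varieties that restricts to $\iota^{\ast}\mathfrak{A}_g\simeq\mc{A}|_{D^{\ast}}$ over $D^{\ast}$; this is precisely a good reduction of $\mc{A}$ at $s$, and by rigidity of abelian schemes it glues with $\mc{A}$ to a family over $B\cup\{s\}$.
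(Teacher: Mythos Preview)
Your approach coincides with the paper's: lift the modular map to $\mathfrak{H}_g^+$ using the trivial monodromy, extend across the puncture by Riemann's removable singularity theorem, and pull back $\mathfrak{A}_g$. The only divergence is your worry that $\ov{\iota}(s)$ might land on $\partial\Omega$, which you propose to rule out via N\'eron--Ogg--Shafarevich or Schmid's nilpotent orbit theorem.

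That concern is reasonable---the paper's one-line appeal to Riemann extension does elide it---but your remedies are overkill. Invoking N\'eron--Ogg--Shafarevich is self-defeating: it already yields good reduction directly from trivial $\ell$-adic monodromy, so the whole period-map construction becomes redundant. The elementary fix is that the bounded Harish--Chandra realization $\Omega$ of $\mathfrak{H}_g^+$ is \emph{convex}. If $\ov{\iota}(s)\in\partial\Omega$, choose a supporting real hyperplane $\{\mr{Re}\,L=c\}$ with $\mr{Re}\,L<c$ on $\Omega$; then the harmonic function $\mr{Re}(L\circ\ov{\iota})$ on $D$ attains an interior maximum at $s$, hence is constant $\equiv c$, contradicting $\ov{\iota}(D^*)\subset\Omega$. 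Equivalently, for each nonzero $v\in\mb{R}^g$ the scalar map $z\mapsto v^{\top}\wt{\iota}(z)\,v$ sends $D^*$ into the upper half plane, and the one-variable maximum principle forces its extension at $s$ to have strictly positive imaginary part; varying $v$ shows $\mr{Im}\,\ov{\iota}(s)$ is positive definite, i.e.\ $\ov{\iota}(s)\in\mathfrak{H}_g^+$.
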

\begin{proof}
Consider the modular map 
\[\iota: D^*\hookrightarrow B\to \mb{A}_g.\]
Since the local monodromy $\rho_b\colon \pi_1(D^*,b)\to \mr{GL}_{2g}(\mb{Z})$ is trivial, the modular map $\iota$ lifts to a map $\wt{\iota}\colon D^*\ra \mathfrak{H}_g^+$. As the Siegel upper half space $\mathfrak{H}_g^+$ is biholomorphic to a bounded domain, the Riemann extension theorem guarantees a holomorphic extension $\wt{\iota}^{\prime}\colon D\to \mathfrak{H}_g^+$, giving rise to a commutative diagram
\[\begin{tikzcd}
D \arrow[r, "\wt{\iota}^{\prime}"]                              &\mathfrak{H}_g^+ \arrow[d] \\
D^* \arrow[r, "\iota"] \arrow[ru, "\wt{\iota}"] \arrow[u, hook] & \mathbb{A}_g            
\end{tikzcd}.\]
 This induces a holomorphic extension $\iota_{B^{\prime}}: B^{\prime}= B\cup \{s\}\ra \mb{A}_g$ with $\iota_{B^{\prime}}|_B=\iota_B$. Finally, setting $\mc{A}^{\prime}\colonequals \mathfrak{A}_g\times_{\mb{A}_g}B^{\prime}$, it follows that $\mc{A}\to B$ has good reduction at $s$.
\end{proof}

\subsubsection*{The Betti form}
Here, we review the theory of Betti forms. These notions were originally introduced by Mok \cite{Mok91} to study the Mordell--Weil groups of abelian varieties over complex function fields.

Let $\mathcal{L}$ be a line bundle on $\mathcal{A}$, for any $b\in B$, there exists a unique smooth $(1,1)$-form $\omega_b\in c_1(\mathcal{L}|_{\mc{A}_b})$ on $\mathcal{A}_b$ which is invariant under translations. The \emph{Betti form} $\omega=\omega(\mathcal{L})$ can be constructed as follows.

Let $U$ be a connected and simply connected open neighborhood of $b$ and $\beta_{b, U}$ be a Betti map. We define $\omega_U\colonequals \beta_{b,U}^*\omega_b$, which is a closed $(1,1)$-form on $\mathcal{A}_U$ and independent of the choice of $b\in U$ for fixed $U$. By gluing $\omega_U$ for different $U$, we get the desired Betti form $\omega$. This form is uniquely determined by the generic fiber $\mathcal{L}_{\eta}$ on $\mc{A}_{\eta}$, where $\eta$ denotes the generic point of $B$.

The Betti form $\omega$ is a real analytic closed $(1,1)$-form satisfying $[m]^*\omega=m^2\omega$ for any integer $m$. For a point $x\in \mathcal{A}$ with $\pi(x)=b$, let $\mr{pr}_x: T_x\mathcal{A}\to T_x\mathcal{A}_{b}$ be the projection to the first factor in \[T_x\mathcal{A}=T_x\mathcal{A}_{b}\oplus T_x\mathcal{\mathcal{F}},\]
where $\mathcal{F}$ is the Betti leaf at $x$. By the construction of the Betti form, we have
\[\omega(v_1,v_2)=\omega_{b}(\mr{pr}_x(v_1),\mr{pr}_x(v_2))\]
for any $v_1, v_2\in T_x\mathcal{A}$.

If $\mathcal{L}$ is ample on $\mathcal{A}_b$ for all $b\in B$, then $\omega_b$ is positive and $\omega$ is semi-positive. Then $\omega_b$ induces a distance on $\mc{A}_b$ 
\[\mr{dist}_b: \mc{A}_b\times \mc{A}_b\to \mb{R}_{\geq 0}.\]

\begin{definition}
Suppose $X$ is a complex manifold and $\Omega$ is a smooth semi-positive $(1,1)$-form on $X$. For a smooth path $\gamma:[a,b]\to X$, the \emph{length} of $\gamma$ with respect to $\Omega$ is defined by
\[L_{\Omega}(\gamma)\colonequals \int_{a}^b\sqrt{\Omega\left(\gamma^{\prime}(t),J\gamma^{\prime}(t)\right)}\mr{d}t\geq 0.\]
\end{definition}

The following lemma will be used in the proof of Proposition \ref{prop_dist}.

\begin{lemma}\label{lem_dist}
Let $\wt{\gamma}_1:[0,1]\to \mc{A}$ with initial point $\wt{a}$ and $\gamma=\pi(\wt{\gamma}_1)$ be its image in $B$. 
Denote by $\mathcal{F}$ the Betti leaf through $\wt{a}$. 
Let $\wt{\gamma}_2: [0,1]\to \mathcal{F}$ be a lift of $\gamma$ to $\mc{F}$ with initial point $\wt{a}$. Then we have

\[\mr{dist}_{\gamma(1)}\big(\wt{\gamma}_1(1),\wt{\gamma}_2(1)\big)\leq L_{\omega}(\wt{\gamma}_1),\]
where $\omega$ is the Betti form on $\mc{A}$.

\end{lemma}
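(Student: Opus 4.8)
The plan is to prove the inequality by differentiating, in the base variable, the fibrewise distance between the two paths and integrating. Set
\[
d(t)\colonequals\mr{dist}_{\gamma(t)}\big(\wt{\gamma}_1(t),\wt{\gamma}_2(t)\big),\qquad t\in[0,1],
\]
which makes sense because both $\wt{\gamma}_1(t)$ and $\wt{\gamma}_2(t)$ lie in $\mc{A}_{\gamma(t)}$. One has $d(0)=0$, and the target is $d(1)\le L_\omega(\wt{\gamma}_1)$. I would deduce this from the pointwise bound
\[
\limsup_{t\to t_0^{+}}\frac{d(t)-d(t_0)}{t-t_0}\ \le\ \sqrt{\omega\big(\wt{\gamma}_1'(t_0),J\wt{\gamma}_1'(t_0)\big)}\qquad\text{for every }t_0\in[0,1),
\]
together with the elementary fact that a continuous function on $[0,1]$ whose upper right Dini derivative is everywhere dominated by a continuous function $g$ satisfies $d(1)-d(0)\le\int_0^1 g(t)\,\mr{d}t$; the right-hand side here is exactly $L_\omega(\wt{\gamma}_1)$ since $\wt{\gamma}_1$ is smooth.

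To establish the pointwise bound at a fixed $t_0$, choose a connected, simply connected neighbourhood $U$ of $\gamma(t_0)$ in $B$ and $\epsilon>0$ with $\gamma\big([t_0,t_0+\epsilon]\big)\subset U$, and use the Betti map $\beta\colonequals\beta_{\gamma(t_0),U}$ based at $\gamma(t_0)$. Two observations are needed. First, for each $t\in[t_0,t_0+\epsilon]$ the restriction $\beta|_{\mc{A}_{\gamma(t)}}\colon\mc{A}_{\gamma(t)}\to\mc{A}_{\gamma(t_0)}$ is an isometry for $\mr{dist}_{\gamma(t)}$ and $\mr{dist}_{\gamma(t_0)}$: it is a diffeomorphism by property (2) of the Betti map, and $\omega_U=\beta^{*}\omega_{\gamma(t_0)}$ together with $\omega_U|_{\mc{A}_{\gamma(t)}}=\omega_{\gamma(t)}$ shows it pulls $\omega_{\gamma(t_0)}$ back to $\omega_{\gamma(t)}$. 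Second, $\beta\circ\wt{\gamma}_2$ is constant equal to $\wt{a}$ near $t_0$: the leaf $\mc{F}$ meets $\mc{A}_U$ in a disjoint union of local Betti leaves $\beta^{-1}(\mr{pt})$ (because $\pi|_{\mc{F}}$ is a covering and $U$ is simply connected), and the connected arc $\wt{\gamma}_2\big([t_0,t_0+\epsilon]\big)$ lies in one of them. Hence, writing $c\colonequals\beta\circ\wt{\gamma}_1$, for $t$ close to $t_0$ we have $d(t)=\mr{dist}_{\gamma(t_0)}\big(c(t),\wt{a}\big)$; since $q\mapsto\mr{dist}_{\gamma(t_0)}(q,\wt{a})$ is $1$-Lipschitz, $|d(t)-d(t_0)|\le\mr{dist}_{\gamma(t_0)}\big(c(t),c(t_0)\big)\le L_{\omega_{\gamma(t_0)}}\big(c|_{[t_0,t]}\big)$, so the upper right Dini derivative of $d$ at $t_0$ is at most $\sqrt{\omega_{\gamma(t_0)}\big(c'(t_0),J_{\gamma(t_0)}c'(t_0)\big)}$, and in particular $d$ is locally Lipschitz.

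It remains to identify $c'(t_0)$. Since $\beta$ restricted to the base fibre $\mc{A}_{\gamma(t_0)}$ is the identity, its differential at $\wt{\gamma}_1(t_0)\in\mc{A}_{\gamma(t_0)}$ is the identity on $T\mc{A}_{\gamma(t_0)}$ and kills $T\mc{F}$, so $d\beta_{\wt{\gamma}_1(t_0)}$ is precisely the projection $\mr{pr}_{\wt{\gamma}_1(t_0)}$ of $T\mc{A}_{\gamma(t_0)}\oplus T\mc{F}$ onto $T\mc{A}_{\gamma(t_0)}$; thus $c'(t_0)=\mr{pr}_{\wt{\gamma}_1(t_0)}\big(\wt{\gamma}_1'(t_0)\big)$. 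Because both $T\mc{A}_{\gamma(t_0)}$ and the Betti leaf $T\mc{F}$ are complex subspaces, $\mr{pr}_{\wt{\gamma}_1(t_0)}$ commutes with $J$, and combined with the identity $\omega(v_1,v_2)=\omega_{\pi(x)}\big(\mr{pr}_x(v_1),\mr{pr}_x(v_2)\big)$ this yields $\omega_{\gamma(t_0)}\big(c'(t_0),J_{\gamma(t_0)}c'(t_0)\big)=\omega\big(\wt{\gamma}_1'(t_0),J\wt{\gamma}_1'(t_0)\big)$, which is the desired pointwise bound. The main obstacle is conceptual rather than computational: the Betti map is not holomorphic, so it is not distance non-increasing in any naive sense, and one cannot estimate directly by pulling everything back to a fixed fibre. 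The device that makes the estimate work is to place the base point of the Betti map at the moving point $\gamma(t_0)$ on the path itself, where $d\beta$ degenerates to the complex-linear fibre projection $\mr{pr}$, so that the length density on $\mc{A}$ agrees exactly with the fibre length density; a minor technical remark to record is that $\wt{\gamma}_2$ is only used as a continuous lift, since only the local constancy of $\beta\circ\wt{\gamma}_2$ enters.
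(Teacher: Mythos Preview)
Your approach differs from the paper's: instead of fixing a single Betti base point $b=\gamma(1)$, projecting all of $\wt{\gamma}_1$ to $\mc{A}_b$ via $\alpha=\beta_{b,U}\circ\wt{\gamma}_1$, and then subdividing and applying the triangle inequality, you differentiate $d(t)=\mr{dist}_{\gamma(t)}\big(\wt{\gamma}_1(t),\wt{\gamma}_2(t)\big)$ and bound its upper Dini derivative at each $t_0$ by placing the Betti base point at $\gamma(t_0)$ itself. The identification $c'(t_0)=\mr{pr}_{\wt{\gamma}_1(t_0)}\big(\wt{\gamma}_1'(t_0)\big)$, and hence the equality $\omega_{\gamma(t_0)}\big(c'(t_0),J_{\gamma(t_0)}c'(t_0)\big)=\omega\big(\wt{\gamma}_1'(t_0),J\wt{\gamma}_1'(t_0)\big)$, is correct and is the common core of both arguments.

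There is, however, a genuine gap. Your claim that $\beta|_{\mc{A}_{\gamma(t)}}\colon(\mc{A}_{\gamma(t)},\mr{dist}_{\gamma(t)})\to(\mc{A}_{\gamma(t_0)},\mr{dist}_{\gamma(t_0)})$ is an isometry is not justified: you correctly note that it pulls the $2$-form $\omega_{\gamma(t_0)}$ back to $\omega_{\gamma(t)}$, but the K\"ahler metric is $g_b(u,v)=\omega_b(u,J_bv)$, and the fibrewise Betti map does not intertwine $J_{\gamma(t)}$ with $J_{\gamma(t_0)}$. For example, in the degree-one polarized family $\mc{A}_s=\mb{C}/(\mb{Z}+(1+s)i\,\mb{Z})$ one has $g_s=\tfrac{1}{1+s}(dx^2+dy^2)$, whereas the Betti identification $(x,y)\mapsto\big(x,\tfrac{y}{1+s}\big)$ pulls $g_0$ back to $dx^2+\tfrac{1}{(1+s)^2}dy^2\neq g_s$. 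Without the isometry your formula $d(t)=\mr{dist}_{\gamma(t_0)}\big(c(t),\text{const}\big)$ fails; and indeed, taking $\wt{\gamma}_1(s)=(is,s)$ in this family gives $d(s)=s/\sqrt{1+s}$ with $d'(s)=(1+\tfrac{s}{2})(1+s)^{-3/2}$, which strictly exceeds $\sqrt{\omega(\wt{\gamma}_1',J\wt{\gamma}_1')}=(1+s)^{-3/2}$ for every $s>0$, so the Dini-derivative bound itself is false. (A smaller slip: the constant value of $\beta\circ\wt{\gamma}_2$ near $t_0$ is $\beta\big(\wt{\gamma}_2(t_0)\big)$, not $\wt{a}$, when $t_0\neq 0$.) The paper's own displayed equality $\omega_{\gamma(t)}\big(\mr{pr}(\cdot),\mr{pr}(J\cdot)\big)=\omega_b\big(\alpha'(t),J\alpha'(t)\big)$ rests on the same unproved step, so this is a subtlety shared with the original argument rather than one introduced by your method.
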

\begin{proof}
Write $b=\gamma(1)$, $\wt{b}_1=\wt{\gamma}_1(1)$ and $\wt{b}_2=\wt{\gamma}_2(1)$.
We first treat the case where $\gamma([0,1])$ is contained in an open subset
$U\subset B$ over which a local Betti map $\beta_{b, U}:\mc{A}_U\to \mc{A}_b$ is defined.
Since $\wt{\gamma}_2([0,1])$ lies entirely in the Betti leaf $\mc{F}$, $\beta_{b,U}(\wt{\gamma}_2(t))\equiv \wt{b}_2$. Let $\alpha\colonequals  \beta_{b,U}\circ \wt{\gamma}_1:[0,1]\to \mathcal{A}_b$.

For any point $b^{\prime}\in U$, the map $\beta_{b,U}$ induces a canonical isomorphism $\beta_{b,U}^{b^{\prime}}: \mc{A}_{b^{\prime}}\to \mc{A}_U\to \mc{A}_b$. Then for $x\in \mc{A}_{b^{\prime}}$, the push-forward $(\beta_{b,U})_*$ factors as
\[T_x\mc{A}\ora{\mr{pr}_x} T_x\mc{A}_{b^{\prime}}\ora{\left(\beta_{b,U}^{b^{\prime}}\right)_*} T_{\beta_{b,U}(x)}\mc{A}_b,\]
where $\mr{pr}_x: T_x\mc{A}\to T_x\mc{A}_{b^{\prime}}$ is the projection. Since 
\[(\beta_{b,U})_*\left(\wt{\gamma}_1^{\prime}(t)\right)=\alpha^{\prime}(t),\]
we have
\[\omega\big(\wt{\gamma}_1^{\prime}(t),J\wt{\gamma}_1^{\prime}(t)\big)=\omega_{\gamma(t)}\left(\mr{pr}_{\wt{\gamma}_1(t)}\big(\wt{\gamma}_1^{\prime}(t)\big),\mr{pr}_{\wt{\gamma}_1(t)}\big(J\wt{\gamma}_1^{\prime}(t)\big)\right)=\omega_b\big(\alpha^{\prime}(t),J\alpha^{\prime}(t)\big)\]
for $t\in [0,1]$.
Therefore
\[\mr{dist}_{b}(\wt{b}_1,\wt{b}_2)\leq \int_{0}^1\sqrt{\omega_b\big(\alpha^{\prime}(t),J\alpha^{\prime}(t)\big)}\mr{d}t=\int_{0}^1\sqrt{\omega\big(\wt{\gamma}_1^{\prime}(t),J\wt{\gamma}_1^{\prime}(t)\big)}\mr{d}t.\]

For the general case, choose a subdivision $0=t_0<t_1<\dots<t_m=1$ such that $\gamma([t_i,t_{i+1}])\subset U_i$ for $0\leq i\leq m-1$, where each $U_i$ admits a Betti map
$\beta_{\gamma(t_{i+1}),U_i}$.

We induct on $m$. The case $m=1$ has just been proved.  Assume the claim holds for $m-1$, and let
\[c=\gamma(t_{m-1}),\quad \wt{c}_1=\wt{\gamma}_1(t_{m-1}),\quad \wt{c}_2=\wt{\gamma}_2(t_{m-1}).\]
The induction hypothesis gives
\[\mr{dist}_{c}(\wt{c}_1,\wt{c}_2)\leq \int_{0}^{t_{m-1}}\sqrt{\omega\big(\wt{\gamma}_1^{\prime}(t),J\wt{\gamma}_1^{\prime}(t)\big)}\mr{d}t.\]

Let $\mathcal{F}'$ be the Betti leaf through $\wt{c}_1$. There exists a lift
\[\wt{\gamma}_3:[t_{m-1},t_m]\to \mc{F}'\]
of $\gamma|_{[t_{m-1},t_m]}$ with initial point $\wt{\gamma}_3(t_{m-1})=\wt{c}_1$. Set $\wt{b}_3\colonequals \wt{\gamma}_3(t_{m})$.
Then we have $\beta_{b,U_{m-1}}^{c}(\wt{c}_1)=\wt{b}_3$  and $\beta_{b,U_{m-1}}^{c}(\wt{c}_2)=\wt{b}_2$.
By the construction of Betti form,
\[\mr{dist}_{c}(\wt{c}_1,\wt{c}_2)=\mr{dist}_{b}(\wt{b}_3,\wt{b}_2).\]
Applying the case $m=1$ to $\wt{\gamma}_1|_{[t_{m-1},t_m]}$ yields
\[\mr{dist}_{b}(\wt{b}_1,\wt{b}_3)\leq \int_{t_{m-1}}^{t_{m}}\sqrt{\omega\big(\wt{\gamma}_1^{\prime}(t),J\wt{\gamma}_1^{\prime}(t)\big)}\mr{d}t.\]
Summing the two inequalities gives
\[\mr{dist}_{b}(\wt{b}_1,\wt{b}_2)\leq \mr{dist}_{b}(\wt{b}_1,\wt{b}_3)+\mr{dist}_{b}(\wt{b}_3,\wt{b}_2)\leq \int_{0}^{1}\sqrt{\omega\big(\wt{\gamma}_1^{\prime}(t),J\wt{\gamma}_1^{\prime}(t)\big)}\mr{d}t.\]
\end{proof}

\subsection{The N\'eron--Tate height}\label{Sec_ht}
Throughout this subsection, we follow the language and conventions of \cite{YZ21}. Let $B$ be a smooth quasi-projective variety of dimension $d$ defined over a number field $k$, and let $K=k(B)$ be its function field. Let $\pi: \mc{A}\to B$ be an abelian scheme, with generic fiber $A \colonequals  \mathcal{A}_K$. Fix a line bundle $\mathcal{L}$ on $\mathcal{A}$ which is relatively ample over $B$ and symmetric, i.e. $[-1]^*\mathcal{L}\simeq\mathcal{L}$. Denote by $L\colonequals  \mathcal{L}_K\in \mr{Pic}(A)$ the generic fiber of $\mc{L}$.  Let $\overline{\mathcal{L}}\in \widehat{\mathrm{Pic}}(\mathcal{A}/\mb{Z})_{\mathrm{nef},\mb{Q}}$ be a nef adelic line bundle  extending $\mathcal{L}$, and let $\overline{H}$ be a big and nef adelic line bundle on $B$. For $x\in A(\overline{K})$, we define the height
\[h_{\ov{\mc{L}}}^{\ov{H}}(x)\colonequals \frac{\ov{\mc{L}}|_{x}\cdot \pi^*\ov{H}^d}{\deg(x)},\quad x\in A(\ov{K}).\]
Here we view $\ov{\mc{L}}$ as an element of $\widehat{\mr{Pic}}(A/\mb{Z})_{\mr{nef}}$ and $\ov{H}$ as an element of $\widehat{\mr{Pic}}(K/\mb{Z})_{\mr{nef}}$. We define the N\'eron--Tate height by the Tate limit:
\[\widehat{h}_{L}^{\ov{H}}(x)\colonequals \lim_{n\to\infty}\frac{1}{4^n}h^{\ov{H}}_{\ov{\mc{L}}}(2^nx),\quad x\in A(\ov{K}).\]

There is an alternative construction of the Néron--Tate height.
As shown in \cite[Theorem 6.1.3]{YZ21}, there exists a nef adelic line bundle $\ov{\mc{L}}^{\mr{can}}$ on $\mc{A}$ extending $\mc{L}$ such that $[2]^*\ov{\mc{L}}^{\mr{can}}\simeq4\ov{\mc{L}}^{\mr{can}}$. The $(1,1)$-form $c_1(\overline{\mathcal{L}}^{\mathrm{can}})$ on
$\mathcal{A}(\mathbb{C})$ coincides with the Betti form $\omega(\mathcal{L})$. 

Note that $\ov{\mc{L}}^{\mr{can}}$ can be viewed as a nef adelic line bundle on $A$ extending $L$. For $x\in A(\overline{K})$, the vector-valued N\'eron--Tate height is defined by
\[\widehat{\mf{h}}_{L}(x)\colonequals \frac{\langle \ov{\mc{L}}^{\mr{can}}|_{x}\rangle}{\deg(x)}\in  \widehat{\mr{Pic}}(K/\mb{Z})_{\mr{nef},\mb{Q}},\]
where $\langle\ov{\mc{L}}^{\mr{can}}|_{x}\rangle$ is the Deligne pairing in relative
dimension $0$. This construction is compatible with the height $\widehat{h}_{L}^{\ov{H}}$ in the sense that
\[\widehat{h}_{L}^{\ov{H}}(x)=\widehat{\mf{h}}_{L}(x)\cdot \ov{H}^d,\]
see \cite[Chapter 5.3]{YZ21} for details. 

 The N\'eron--Tate height satisfies the following properties. 
\begin{proposition}[{\cite[Theorem 5.3.1]{YZ21}}]\label{prop_ht}
Let $L$ be an ample line bundle on $A$, and let $\overline{H}$ be a big and nef adelic line bundle on $B$. Then:
\begin{enumerate}
\renewcommand{\labelenumi}{(\theenumi)}
\item(The Northcott property) For any real numbers $D,M\in\mathbb{R}$, the set
\[\{x\in A(\ov{K}): \deg(x)<D,\  \widehat{h}_{L}^{\ov{H}}(x)<M\}\]
is finite.
\item For any integer $m$ and $x\in A(\ov{K})$, $\widehat{h}_{L}^{\ov{H}}(mx)=m^2\widehat{h}_{L}^{\ov{H}}(x)$.
\item A point $x\in A(\ov{K})$ is torsion if and only if $\widehat{h}_{L}^{\ov{H}}(x)=0$.
\item For any big and nef adelic line bundles $\overline{H},\overline{H}^{\prime}$ on $B$, there exist constants $c_1,c_2>0$ such that
\[c_1 \widehat{h}_{L}^{\ov{H}}(x)< \widehat{h}_{L}^{\ov{H^{\prime}}}(x)<c_2 \widehat{h}_{L}^{\ov{H}}(x),\quad \forall x\in A(\ov{K}).\]
\end{enumerate}

\end{proposition}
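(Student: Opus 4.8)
The plan is to obtain all four statements as formal consequences of the Yuan--Zhang intersection theory for adelic line bundles, exactly as in \cite[Theorem 5.3.1]{YZ21}, using the two structural facts recalled just above: the canonical extension $\ov{\mc{L}}^{\mr{can}}$ is a \emph{nef} adelic line bundle on $\mc{A}$ satisfying $[2]^*\ov{\mc{L}}^{\mr{can}}\simeq 4\ov{\mc{L}}^{\mr{can}}$, and the scalar height is the intersection pairing $\widehat{h}_L^{\ov{H}}(x)=\widehat{\mf{h}}_L(x)\cdot\ov{H}^d$ of the nef class $\widehat{\mf{h}}_L(x)\in\widehat{\mr{Pic}}(K/\mb{Z})_{\mr{nef},\mb{Q}}$ with $\ov{H}$. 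In particular $\widehat{h}_L^{\ov{H}}(x)\geq 0$ always, being a pairing of nef classes. Of the four assertions, only (1) is genuinely deep: it is the Northcott property for the fibered height attached to a big and nef $\ov{H}$, and I would simply invoke \cite[Theorem 5.3.1]{YZ21} for it, the point being that bigness of $\ov{H}$ controls the number of points of bounded height on $B$, which then propagates to $\mc{A}$ through the relative ampleness of $\mc{L}$.

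Granting (1), I would treat (2) and (3) next. For (2): since $\ov{\mc{L}}^{\mr{can}}$ is symmetric and $[2]^*\ov{\mc{L}}^{\mr{can}}\simeq 4\ov{\mc{L}}^{\mr{can}}$, the usual telescoping via the theorem of the cube (or directly the defining Tate limit) gives $[m]^*\ov{\mc{L}}^{\mr{can}}\simeq m^2\ov{\mc{L}}^{\mr{can}}$ for all $m\in\mb{Z}$; restricting to the $0$-cycle $x$ yields $\langle\ov{\mc{L}}^{\mr{can}}|_{mx}\rangle=m^2\langle\ov{\mc{L}}^{\mr{can}}|_{x}\rangle$ in the Deligne pairing, hence $\widehat{\mf{h}}_L(mx)=m^2\widehat{\mf{h}}_L(x)$, and pairing with $\ov{H}^d$ gives (2). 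For (3): if $x$ has exact order $N$ then $N^2\widehat{h}_L^{\ov{H}}(x)=\widehat{h}_L^{\ov{H}}(Nx)=\widehat{h}_L^{\ov{H}}(0)=0$ by (2), so $\widehat{h}_L^{\ov{H}}(x)=0$. Conversely, if $\widehat{h}_L^{\ov{H}}(x)=0$, then by (2) the whole orbit $\{2^n x:n\geq 1\}$ has height $0$, while $\deg(2^n x)=[K(2^n x):K]\leq[K(x):K]=\deg(x)$ since $2^n x$ is defined over the residue field of $x$; by the Northcott property (1) this orbit is finite, so $2^{n_1}x=2^{n_2}x$ for some $n_1>n_2$, whence $(2^{n_1}-2^{n_2})x=0$ and $x$ is torsion.

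For (4), fix big and nef adelic line bundles $\ov{H},\ov{H}'$ on $B$. Using bigness of $\ov{H}$ (resp.\ $\ov{H}'$) I would choose positive rationals $a,b$ so that $a\ov{H}-\ov{H}'$ and $b\ov{H}'-\ov{H}$ are nef; then the telescoping identity
\[a^d\,\ov{H}^d-(\ov{H}')^d=\sum_{i=0}^{d-1}(a\ov{H})^i\cdot(\ov{H}')^{d-1-i}\cdot(a\ov{H}-\ov{H}')\]
exhibits the left side as a sum of products of nef classes, so pairing with the nef class $\widehat{\mf{h}}_L(x)$ gives $\widehat{h}_L^{\ov{H}'}(x)\leq a^d\,\widehat{h}_L^{\ov{H}}(x)$; the symmetric estimate yields $\widehat{h}_L^{\ov{H}}(x)\leq b^d\,\widehat{h}_L^{\ov{H}'}(x)$, so (4) holds with $c_1=b^{-d}$, $c_2=a^d$, the inequalities being strict precisely for non-torsion $x$ by (3).

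The only real obstacle is (1): it cannot be reduced to elementary manipulations and requires the full adelic arithmetic intersection theory of \cite{YZ21}, where it is established as part of \cite[Theorem 5.3.1]{YZ21}. Everything else above is bookkeeping on top of the quadraticity and nefness of $\ov{\mc{L}}^{\mr{can}}$ together with multilinearity and positivity of the pairing; accordingly I would present the proof as a short deduction, citing \cite[Theorem 5.3.1, §6.1]{YZ21} for (1) and for the existence and properties of $\ov{\mc{L}}^{\mr{can}}$.
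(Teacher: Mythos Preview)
The paper does not supply its own proof of this proposition: it is stated with the attribution \cite[Theorem 5.3.1]{YZ21} and nothing more. Your sketch is a correct and standard account of how the four assertions are obtained from the adelic formalism, and it is entirely compatible with what Yuan--Zhang do; in particular your treatment of (2) and (3) via quadraticity plus Northcott is exactly the classical argument, and your identification of (1) as the only non-formal input is accurate.

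One small technical point on (4): the claim that bigness of $\ov{H}$ lets you choose $a$ with $a\ov{H}-\ov{H}'$ \emph{nef} is not quite what one gets directly. What bigness gives (e.g.\ via Yuan's inequality, used elsewhere in the paper as \cite[Theorem 5.2.2]{YZ21}) is that $a\ov{H}-\ov{H}'$ has positive arithmetic volume for $a$ large, hence admits an effective section. Effectivity is enough for your telescoping argument, since each summand is then a product of nef classes against an effective class, which is still $\geq 0$ when paired with the nef $\widehat{\mf{h}}_L(x)$. So the argument goes through once you replace ``nef'' by ``effective'' (or ``big'') in that step. Your parenthetical remark that the strict inequalities in (4) fail for torsion $x$ is also well taken; this is a harmless imprecision in the paper's statement.
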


We also consider the geometric counterpart of the N\'eron--Tate height. Let $\wt{\mc{L}}^{\mr{can}}$ be the geometric part of $\ov{\mc{L}}^{\mr{can}}$, which is the image of $\ov{\mc{L}}^{\mr{can}}$ under the canonical composition
\[\widehat{\Pic}(\mc{A}/\mb{Z})_{\mr{nef},\mb{Q}}\longrightarrow\widehat{\Pic}(A/\mb{Z})_{\mr{nef},\mb{Q}}\longrightarrow \widehat{\Pic}(A/K)_{\mr{nef},\mb{Q}}.\]
Similarly, let $\wt{H}$ be the geometric part of $\ov{H}$. For a point $x\in A(\ov{K})$, we define \emph{the geometric N\'eron--Tate height} as
\[\widehat{h}_{L,\mr{geom}}^H(x)\colonequals \frac{\wt{\mc{L}}^{\mr{can}}|_{x}\cdot \pi^*\wt{H}^{d-1}}{\deg(x)}.\]

As proved in \cite[Theorem B]{GV25}, the geometric N\'eron--Tate height can be interpreted as an integration
\[\widehat{h}_{L,\mr{geom}}^H(x)\colonequals \frac{1}{\deg(x)}\int_{\mc{A}(\mb{C})} [\Delta_x(\mb{C})]\wedge c_1(\ov{\mc{L}}^{\mr{can}})_{\infty}\wedge \pi^*c_1(\ov{H})_{\infty}^{d-1},\]
where $\Delta_x$ is the Zariski closure of $x$ in $\mathcal{A}$.

The geometric height is controlled by the arithmetic one:
\begin{proposition}\label{prop_htcomp}
Let $\overline{H}$ be a big and nef adelic line bundle on $B$.
Then there exists a constant $\varepsilon>0$ such that
\[\varepsilon\cdot\widehat{h}_{L,\mr{geom}}^H(x)\leq \widehat{h}_{L}^{\ov{H}}(x),\quad \forall x\in A(\ov{K}).\]
\end{proposition}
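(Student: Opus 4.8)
The plan is to exploit the decomposition of an adelic line bundle into its geometric part and its "fibral" part over each place of $k$, and to track how the arithmetic intersection number $\widehat{h}_L^{\ov H}(x) = \langle \ov{\mc{L}}^{\mr{can}}|_x\rangle \cdot \ov H^d / \deg(x)$ degenerates when one replaces $\ov H$ by its geometric part $\wt H$. Concretely, both $\ov{\mc{L}}^{\mr{can}}$ and $\ov H$ are nef adelic line bundles, so all the intersection numbers appearing are nonnegative, and the strategy is to produce an \emph{effective} comparison rather than just a sign statement: one wants a single $\varepsilon>0$ working uniformly in $x$. The natural mechanism is that $\ov H$, being big and nef on the arithmetic variety $B$ over $\mb{Z}$, dominates a positive multiple of any fixed "constant" adelic line bundle pulled back from $\Spe \mc O_k$ (i.e. the arithmetic ample cone has nonempty interior containing such a direction), and correspondingly $\ov H^d$ decomposes as $\varepsilon$ times the product $\wt H^{d-1}\cdot (\text{something positive over } \Spe\mc O_k)$ plus further nonnegative terms.

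The key steps, in order, are as follows. First, I would set up the short exact sequence relating $\widehat{\Pic}(B/\mb{Z})$ to $\widehat{\Pic}(B/k)$ (the geometric part) and the group of adelic/metrized line bundles pulled back from $\Spe\mc O_k$, following \cite{YZ21}; under this, $\ov H = \wt H + (\text{pullback of a nef class on } \Spe\mc O_k) + (\text{vertical corrections})$, where by bigness of $\ov H$ the $\Spe\mc O_k$-component can be arranged to be a strictly positive multiple $\varepsilon_0$ of the arithmetic hyperplane class. Second, I would expand $\ov H^d$ multinomially and intersect with $\langle\ov{\mc{L}}^{\mr{can}}|_x\rangle$; since $\ov{\mc{L}}^{\mr{can}}$ is nef and $\ov H$ is nef, every monomial contributes a nonnegative arithmetic intersection number, so $\widehat h_L^{\ov H}(x) \geq \varepsilon_0 \cdot \deg(x)^{-1}\,\langle\ov{\mc{L}}^{\mr{can}}|_x\rangle\cdot \wt H^{d-1}\cdot[\text{pt on }\Spe\mc O_k]$. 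Third, I would identify the right-hand term: intersecting against the class of a point of $\Spe\mc O_k$ restricts everything to the geometric fiber $B_k$, and $\langle\ov{\mc{L}}^{\mr{can}}|_x\rangle\cdot\wt H^{d-1}$ restricted there is exactly $\deg(x)\cdot \widehat h_{L,\mr{geom}}^H(x)$ by the definition given just above the statement (equivalently, by the integral formula from \cite[Theorem B]{GV25}, since pulling back to $B(\mb{C})$ and integrating against $c_1(\ov{\mc{L}}^{\mr{can}})_\infty\wedge\pi^*c_1(\ov H)_\infty^{d-1}$ is precisely what the archimedean component of this monomial computes). Combining gives $\widehat h_L^{\ov H}(x)\geq \varepsilon_0\,\widehat h_{L,\mr{geom}}^H(x)$, so we take $\varepsilon=\varepsilon_0$.

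I expect the main obstacle to be making the "bigness produces a uniform $\varepsilon_0$" step precise and clean: one must check that $\ov H$ big and nef really does dominate a fixed positive multiple of the pullback of the degree class from $\Spe\mc O_k$ \emph{inside the nef cone}, so that after subtracting $\varepsilon_0$ times that class the remainder $\ov H - \varepsilon_0\cdot(\text{class})$ is still nef — only then is the multinomial expansion an inequality of the desired sign. This should follow from the fact that pulling back an ample class on $\Spe\mc O_k$ to $B$ gives a nef class that is "small" relative to a big one, but it requires care with the adelic (as opposed to merely algebraic) structure; the archimedean places must be handled via the corresponding statement about the Betti/canonical forms being semipositive. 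A secondary, more bookkeeping-type obstacle is checking that the cross terms in the expansion — those involving both $\wt H$ and the vertical corrections — are genuinely nonnegative and not merely bounded, which again reduces to nefness of all the bundles in play. Once these two points are in hand, the argument is a one-line consequence of the definitions.
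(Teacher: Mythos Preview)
Your proposal is correct and follows the same core idea as the paper: exploit bigness of $\ov{H}$ to subtract a small positive multiple of the ``constant'' adelic class (the paper's $\ov{\mc{O}}_B(\varepsilon)$: trivial underlying bundle, metric $\|1\|=e^{-\varepsilon}$), then identify intersection against that class with the geometric height. The paper's execution is more direct and, in particular, sidesteps the obstacle you flag. Instead of a full multinomial expansion requiring $\ov{H}-\varepsilon_0\cdot(\text{class})$ to remain \emph{nef}, the paper replaces only one factor of $\ov{H}$ in $\ov{H}^d$ and shows via Yuan's arithmetic volume inequality \cite[Theorem~5.2.2]{YZ21} that $\widehat{\mr{vol}}\big(\ov{H}-\ov{\mc{O}}_B(\varepsilon)\big)\geq\ov{H}^{d+1}-\varepsilon(d+1)H^d>0$ for small $\varepsilon$; hence $\ov{H}-\ov{\mc{O}}_B(\varepsilon)$ carries an effective section. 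Since $\widehat{\mf{h}}_L(x)\cdot\ov{H}^{d-1}$ is already a product of nef classes, pseudo-effectivity (rather than nefness) of the remainder suffices to give $\widehat{\mf{h}}_L(x)\cdot\ov{H}^{d-1}\cdot\ov{\mc{O}}_B(\varepsilon)\leq\widehat{\mf{h}}_L(x)\cdot\ov{H}^d$, and the left side is computed directly as $\varepsilon\cdot\widehat{h}_{L,\mr{geom}}^H(x)$. Your short-exact-sequence decomposition of $\ov{H}$ is thus unnecessary, and your worry about nefness of the remainder dissolves once one weakens to pseudo-effectivity.
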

\begin{proof}
Let $\ov{B}$ be a compactification of $B$ and Let $\mc{Y}$ be an arithmetic model of $\ov{B}$. For $\varepsilon>0$, denote by $ \ov{\mc{O}}_B(\varepsilon)\in
\widehat{\mathrm{Pic}}(B/\mb{Z}){_{\mathrm{nef}}}$ the hermitian line bundle on $\mathcal{Y}$ with underlying line bundle $\mathcal{O}_{\mathcal{Y}}$ and metric $\|1\|\colonequals e^{-\varepsilon}$. 

By Yuan's theorem \cite[Theorem 5.2.2]{YZ21}, 
\[\widehat{\mr{vol}}\big(\ov{H}-\ov{\mc{O}}_B(\varepsilon)\big)\geq \ov{H}^{d+1}-(d+1)\ov{H}^d\cdot\ov{\mc{O}}_B(\varepsilon)=\ov{H}^{d+1}-\varepsilon(d+1)H^d.\]
For sufficiently small $\varepsilon>0$, the right-hand side is positive, hence $\widehat{\mr{vol}}\big(\ov{H}-\ov{\mc{O}}_B(\varepsilon)\big)>0$. Thus there exists an effective section $s$ of $\ov{H}-\ov{\mc{O}}_B(\varepsilon)$ with $\widehat{\di}(s)\geq 0$. It follows that
\[\widehat{\mf{h}}_{L}(x)\cdot \ov{H}^{d-1}\cdot \ov{\mc{O}}_B(\varepsilon)\leq \widehat{\mf{h}}_{L}(x)\cdot \ov{H}^{d}.\]
Since 
\[\widehat{\mf{h}}_{L}(x)\cdot \ov{H}^{d-1}\cdot \ov{\mc{O}}_B(\varepsilon)=\frac{\ov{\mc{L}}^{\mr{can}}|_{x}\cdot \pi^*\ov{H}^{d-1}\cdot \pi^* \ov{\mc{O}}_B(\varepsilon)}{\deg(x)}=\frac{\varepsilon}{\deg(x)}\wt{\mc{L}}^{\mr{can}}|_{x}\cdot \pi^*\wt{H}^{d-1}=\widehat{h}_{L,\mr{geom}}^H(x),\]
we obtain
\[\varepsilon\cdot\widehat{h}_{L,\mr{geom}}^H(x)\leq \widehat{h}_{L}^{\ov{H}}(x),\quad \forall x\in A(\ov{K}).\]
\end{proof}

\subsection{The equidistribution theorem}\label{Sec_equi}
Let $\pi:\mathcal{X}\to B$ be a flat morphism of projective varieties over $\mb{Q}$ with relative dimension $n$. Set $d\colonequals \dim B$ and $K\colonequals \mb{Q}(B)$. 
Let $L$ be a line bundle on $X\colonequals \mc{X}_K$, and let $\ov{L}\in \widehat{\mr{Pic}}(X/\mb{Z})_{\mr{int},\mb{Q}}$ be an adelic line bundle extending $L$, defined over $\mc{X}_U\to U$, where $U\subset B$ is a Zariski open subscheme of $B$. Let $\ov{H}\in \widehat{\mr{Pic}}(B/\mb{Z})_{\mr{int}}$ be an adelic line bundle on $B$. We may define Moriwaki heights for points and subvarieties.

\begin{definition}
For a point $x\in X(\ov{K})$, the \emph{Moriwaki height} of $x$ with respect to $\ov{L}$ and $\ov{H}$ is defined by
\[h_{\ov{L}}^{\ov{H}}(x)\colonequals \frac{\ov{L}|_{x}\cdot\ov{H}^{d}}{\deg(x)},\]
where the intersection number is taken in $\widehat{\mr{Pic}}(x/\mb{Z})_{\mr{int},\mb{Q}}$.

For a closed subvariety $Z\subset X$, the \emph{Moriwaki height} of $Z$ with respect to $\ov{L}$ and $\ov{H}$ is defined by
\[h_{\ov{L}}^{\ov{H}}(Z)\colonequals \frac{\big(\ov{L}|_{Z}\big)^{\dim Z+1}\cdot\ov{H}^{d}}{(\dim Z+1)\deg_{L}(Z/K)}.\]
Here the intersection number is taken in $\widehat{\mr{Pic}}(Z/\mb{Z})_{\mr{int},\mb{Q}}$ and $\deg_{L}(Z/K)\colonequals \big(L|_Z\big)^{\dim Z}$.  
\end{definition}

\begin{definition}
For an infinite sequence $(x_m)_{m \geq 1}$ in $X(\overline{K})$, we say that $(x_m)_{m \geq 1}$ is $h_{\ov{L}}^{\ov{H}}$-\emph{small} if 
\[h_{\ov{L}}^{\ov{H}}(x_m) \rightarrow h_{\ov{L}}^{\ov{H}}(X) \text{ as } m \to \infty,\] 
and is \emph{generic} if any infinite subsequence of $(x_m)_{m \geq 1}$ is Zariski dense in $X$. 
\end{definition}
\begin{remark}
When $\pi: \mc{A}\to B$ is an abelian scheme and $\ov{L}$ is the canonical adelic line bundle extending an ample and symmetric line bundle $L$, then the Moriwaki height $h_{\ov{L}}^{\ov{H}}: A(\ov{K})\to \mb{R}$ coincides with the N\'eron--Tate height. Moreover, one has $h_{\ov{L}}^{\ov{H}}(A)=0$. In this case, a sequence $(x_m)_{m\geq 1}$ is $h_{\ov{L}}^{\ov{H}}$-small if and only if $\lim\limits_{m\to \infty}\widehat{h}_{L}^{\ov{H}}(x_m)=0$. We will simply call it \emph{small} if there is no ambiguity.
\end{remark}

For a point $v\in U(\mb{C})$, we say that $v$ a \emph{fully transcendental closed point} with respect to $U/\mb{Q}$ if the image of \[\Spe \mb{C}\ora{v} U_{\mb{C}}\to U\] is the generic point of $U$. Such a fully transcendental point $v\in U(\mb{C})$ determines an embedding $\sigma_v:K\hookrightarrow \mb{C}$, and we have a canonical identification $X_{\sigma_v}(\mb{C})=\mc{X}(\mb{C})_{v}$. 

For each point $x\in X(\ov{K})$, let $\Delta_{x}\subset \mc{X}_U$ be the Zariski closure of the image of $x$. Its analytification $\Delta_{x_m}(\mb{C})$ defines a Dirac current $[\Delta_{x_m}]$ on  $\mc{X}_U(\mb{C})$. Fix a fully transcendental point $v\in U(\mb{C})$, the Galois orbit of $x$ is
\[
O_{v}(x) \colonequals \mathrm{Gal}(\overline{K}/K)\cdot x \subset X(\overline{K}) \subset X_{\sigma_v}(\mathbb{C})=\mc{X}(\mb{C})_{v}.
\]
Then the slice of the current $[\Delta_{x_m}]$ along the fiber over $v$ is the discrete measure
\[
[\Delta_{x_m}]_v=\sum_{z \in O_{v}(x)} \delta_z,
\]
where $\delta_z$ denotes the Dirac measure at $z\in X_{\sigma_v}(\mathbb{C})$.

The following is a special case of the equidistribution theorem of Chen--Moriwaki:

\begin{theorem}[\cite{CM24}]\label{thm_equidis}
With notations as above. Let $L$ be an ample line bundle on $X$, and let $\ov{L}\in \widehat{\mr{Pic}}(X/\mb{Z})_{\mr{int},\mb{Q}}$ be a nef adelic line bundle extending $L$, defined over $\mc{X}_U\to U$. Let $\ov{H}\in \widehat{\mr{Pic}}(K/\mb{Z})_{\mr{int}}$ be a big and nef adelic line bundle on $B$. Let $(x_m)_{m\geq 1}$ be a generic and small sequence in $X(\overline{K})$. Then for the archimedean place of $\mb{Q}$, there is a weak convergence
\[\frac{1}{\deg(x_m)}[\Delta_{x_m}]\wedge c_1(\pi^*\ov{H})^{d}_{\infty}\longrightarrow \frac{1}{\deg_L(X/K)}c_1(\ov{L})^n_{\infty}\wedge c_1(\pi^*\ov{H})^{d}_{\infty}\]
of measures on $\mc{X}_U(\mb{C})$.
\end{theorem}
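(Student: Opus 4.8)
The plan is to obtain Theorem~\ref{thm_equidis} as a specialization of the equidistribution theorem of Chen--Moriwaki \cite[Theorem F]{CM24}, applied over the adelic curve attached to the polarized base $(B,\ov{H})$. Recall that, following Moriwaki's theory of heights over finitely generated fields — recast in the language of adelic curves in \cite{CM24}, and compatible with the adelic intersection theory of \cite{YZ21} — the projective variety $B/\mb{Q}$ together with the big and nef adelic line bundle $\ov{H}$ determine an adelic curve $S_{\ov{H}}$ over $\Spe\mb{Z}$ with underlying field $K=\mb{Q}(B)$. Its non-archimedean places encode the geometry of an arithmetic model of $B$ weighted by $\ov{H}$, while the places lying over the archimedean place of $\mb{Q}$ are parametrized by the fully transcendental closed points of $U(\mb{C})$ (equivalently, by the generic point of $B_{\mb{C}}$), equipped with the positive measure induced by $c_1(\ov{H})^d_{\infty}$ on $B(\mb{C})$. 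Here we use that the set of non-transcendental points and the locus $(\ov{B}\setminus U)(\mb{C})$ both have $c_1(\ov{H})^d_{\infty}$-measure zero, so they may be discarded and the relevant analytic space is $\mc{X}_U(\mb{C})\to U(\mb{C})$. Under this dictionary the Moriwaki heights $h_{\ov{L}}^{\ov{H}}(x)$ and $h_{\ov{L}}^{\ov{H}}(X)$ agree with the $S_{\ov{H}}$-heights of $x$ and of $X$, and the notions of \emph{generic} and \emph{small} of the present section coincide with those of \cite{CM24}.

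With these identifications, $\ov{L}$ becomes a nef adelic line bundle on $X$ over $S_{\ov{H}}$ with ample generic fibre $L$, and $(x_m)_{m\ge1}$ is a generic small sequence, so the hypotheses of \cite[Theorem F]{CM24} are met. The conclusion of that theorem at the archimedean place of $\mb{Q}$ is a statement about the total space of archimedean fibres, i.e. about measures on $\mc{X}_U(\mb{C})$: the normalized Galois-orbit currents converge weakly to the equilibrium current associated to $\ov{L}$. It remains to identify these two objects with the two sides of the displayed convergence. On a fibre over a fully transcendental $v\in U(\mb{C})$ we have the canonical identification $X^{\mr{an}}_v=X_{\sigma_v}(\mb{C})=\mc{X}(\mb{C})_v$, the Galois orbit $O_v(x_m)$ of $x_m$ gives the discrete measure $[\Delta_{x_m}]_v=\sum_{z\in O_v(x_m)}\delta_z$, and the equilibrium measure is $\frac{1}{\deg_L(X/K)}c_1(\ov{L})^n_{\infty}$ restricted to $\mc{X}(\mb{C})_v$, because $\ov{L}$ is a nef adelic extension of the ample bundle $L$. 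Integrating these fibrewise measures against $c_1(\ov{H})^d_{\infty}$ over $v\in U(\mb{C})$ and using the slicing formula for $[\Delta_{x_m}]$ along $\pi$ yields precisely the currents $\frac{1}{\deg(x_m)}[\Delta_{x_m}]\wedge c_1(\pi^*\ov{H})^d_{\infty}$ and $\frac{1}{\deg_L(X/K)}c_1(\ov{L})^n_{\infty}\wedge c_1(\pi^*\ov{H})^d_{\infty}$ on $\mc{X}_U(\mb{C})$, and \cite[Theorem F]{CM24} is exactly the assertion that the former converges weakly to the latter. (If one prefers to argue fibrewise, the passage from almost-everywhere fibrewise equidistribution to convergence of the integrated currents is a routine application of the dominated convergence theorem, the fibrewise masses being uniformly bounded.)

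The step I expect to require the most care is this translation dictionary: one must verify that the archimedean places of $S_{\ov{H}}$, with their equilibrium measures, are correctly described by $\big(U(\mb{C}),\,c_1(\ov{H})^d_{\infty}\big)$ together with $\frac{1}{\deg_L(X/K)}c_1(\ov{L})^n_{\infty}$ on the fibres — which is where the formalism of \cite{YZ21} and the hypothesis that $\ov{L}$ extends the ample bundle $L$ are used — and that quasi-projectivity of $U$ causes no trouble, since $\mc{X}_U(\mb{C})$ carries the full $c_1(\pi^*\ov{H})^d_{\infty}$-mass and $\Delta_{x_m}$ is finite over $U$. Once the dictionary is in place, no analytic input beyond \cite[Theorem F]{CM24} is required.
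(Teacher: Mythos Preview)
Your proposal is correct and matches the paper's own treatment: the paper does not give an independent proof of Theorem~\ref{thm_equidis} but simply cites it as a special case of \cite[Theorem~F]{CM24}, noting in the subsequent remark that the required instance corresponds to a polarized adelic structure in the sense of Chen--Moriwaki. Your write-up spells out the translation dictionary (between the Moriwaki heights here and the $S_{\ov{H}}$-heights there, and between the archimedean places and the fully transcendental points weighted by $c_1(\ov{H})^d_\infty$) in more detail than the paper does, but this is exactly the content behind the paper's one-line reference.
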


\begin{remark}
The equidistribution theorem in this relative setting was first proved in \cite[Theorem 6.1]{Mor00} under an additional hypothesis called the \emph{Moriwaki condition}. A similar result for quasi-projective varieties was proved by Yuan--Zhang in \cite[Theorem 5.4.6]{YZ21}. Chen--Moriwaki removed the Moriwaki condition in \cite[Theorem F]{CM24}, proving a more general form of the theorem in the setting of adelic curves. In this article, we only require the above special case, corresponding to a polarized adelic structure in the sense of Chen--Moriwaki. 
\end{remark}

\subsection{Other lemmas}
In this subsection, we provide several technical lemmas on Riemann surfaces that are required for the proofs in Section \ref{Sec_proof}. Readers may skip the proofs in this subsection at a first reading. \medskip

\noindent The following result is a length-area inequality, which plays a key role in the proof of Proposition \ref{prop_goodset}.
\begin{lemma}\label{lem_area}
Let $D = D(0, 2) \subset \mathbb{C}$ be an open disk, and let $Z \subset D$ be a finite set. Suppose that $\omega$ is a smooth semi-positive $(1, 1)$-form on $D\setminus Z$. For $\rho \in (0, 2)$, define the circle $\gamma_{\rho} \colonequals  \partial D(0, \rho)$, and denote $E\colonequals \{\rho\in (0,2): \gamma_{\rho}\cap Z\neq \varnothing\}$. Then, we have the following inequality:
\[\int_{D\setminus Z}\omega\geq \frac{1}{4\pi}\int_{(1,2)\setminus E}L_{\omega}(\gamma_{\rho})^2\mr{d}\rho.\]
\end{lemma}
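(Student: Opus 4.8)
The plan is to prove the length-area inequality by passing to polar coordinates, applying the Cauchy--Schwarz inequality on each circle $\gamma_\rho$, and then integrating over $\rho$. Write $\omega = f(r,\theta)\, r\, \mathrm{d}r \wedge \mathrm{d}\theta$ on $D\setminus Z$ in polar coordinates $z = r e^{\sqrt{-1}\theta}$, where $f \geq 0$ is the smooth density of the semi-positive form with respect to Lebesgue measure (so that $\int_{D\setminus Z}\omega = \int\!\!\int f(r,\theta)\, r\, \mathrm{d}r\, \mathrm{d}\theta$, the removed finite set $Z$ having measure zero and hence not affecting the integral). The starting point is to express $L_\omega(\gamma_\rho)$ in these coordinates: parametrizing $\gamma_\rho$ by $\theta \mapsto \rho e^{\sqrt{-1}\theta}$, the tangent vector has the property that $\omega(\gamma_\rho'(\theta), J\gamma_\rho'(\theta)) = \rho^2 f(\rho,\theta)$, so that
\[
L_\omega(\gamma_\rho) = \int_0^{2\pi} \sqrt{\rho^2 f(\rho,\theta)}\,\mathrm{d}\theta = \rho\int_0^{2\pi}\sqrt{f(\rho,\theta)}\,\mathrm{d}\theta.
\]
This requires care at the finitely many $\rho$ for which $\gamma_\rho$ meets $Z$, but those $\rho$ lie in the finite set $E$, which we have excluded from the domain of integration, so the formula is valid for $\rho \in (1,2)\setminus E$.

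Next I would apply Cauchy--Schwarz to the $\theta$-integral: for fixed $\rho\in(1,2)\setminus E$,
\[
L_\omega(\gamma_\rho)^2 = \rho^2\left(\int_0^{2\pi}\sqrt{f(\rho,\theta)}\,\mathrm{d}\theta\right)^2 \leq \rho^2\cdot 2\pi \int_0^{2\pi} f(\rho,\theta)\,\mathrm{d}\theta = 2\pi\rho^2\int_0^{2\pi} f(\rho,\theta)\,\mathrm{d}\theta.
\]
Dividing by $4\pi$ and using $\rho < 2$, hence $\rho^2/(4\pi) \leq \rho/(2\pi)$ — wait, one must be slightly more careful: we want a bound with $r\,\mathrm{d}r$ appearing to reconstruct the area integral. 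Since $1 < \rho < 2$, we have $\rho \leq 2$, so $\frac{\rho^2}{4\pi} = \frac{\rho}{4\pi}\cdot\rho \leq \frac{\rho}{2\pi}$, giving
\[
\frac{1}{4\pi}L_\omega(\gamma_\rho)^2 \leq \frac{\rho^2}{2}\int_0^{2\pi} f(\rho,\theta)\,\mathrm{d}\theta \leq \rho\int_0^{2\pi} f(\rho,\theta)\,\mathrm{d}\theta.
\]
Then I integrate over $\rho\in(1,2)\setminus E$; since $E$ is finite it has measure zero, and since $f\geq 0$ the integral over $(1,2)\setminus E$ of the right-hand side is at most $\int_1^2\!\!\int_0^{2\pi} f(\rho,\theta)\,\rho\,\mathrm{d}\theta\,\mathrm{d}\rho \leq \int_{D\setminus Z}\omega$, where the last inequality uses semi-positivity of $\omega$ on the annulus $1 < |z| < 2$ together with positivity of $f$ on the rest of $D$. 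This yields
\[
\int_{D\setminus Z}\omega \geq \frac{1}{4\pi}\int_{(1,2)\setminus E} L_\omega(\gamma_\rho)^2\,\mathrm{d}\rho,
\]
as desired.

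The main obstacle, such as it is, is bookkeeping rather than conceptual: one must justify that the polar-coordinate density $f$ is genuinely nonnegative and locally integrable (which follows from $\omega$ being a smooth semi-positive $(1,1)$-form on $D\setminus Z$, i.e. $\omega = \frac{\sqrt{-1}}{2}h\,\mathrm{d}z\wedge\mathrm{d}\bar z$ with $h\geq 0$ smooth), that the length functional $L_\omega$ is well-defined for the circles $\gamma_\rho$ avoiding $Z$, and that Fubini's theorem applies to move freely between the area integral and the iterated polar integral. A secondary point worth stating explicitly is the identity $\omega(v, Jv) = $ (density) $\cdot \|v\|^2_{\mathrm{eucl}}$ relating the abstract length integrand to the coordinate density; this is where the factor $\rho^2$ (the squared Euclidean speed of $\theta\mapsto \rho e^{\sqrt{-1}\theta}$) enters. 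None of these steps is deep, but they should be spelled out so the inequality — and in particular the precise constant $1/(4\pi)$ and the restriction to the outer annulus $(1,2)$ — is transparent.
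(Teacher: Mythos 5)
Your argument is correct and is essentially the same as the paper's: both pass to polar coordinates, express the circle-length as $\rho\int_0^{2\pi}\sqrt{\lambda}\,\mathrm{d}\theta$, apply Cauchy--Schwarz in $\theta$ to get $L_\omega(\gamma_\rho)^2\le 2\pi\rho\cdot\rho\int_0^{2\pi}\lambda\,\mathrm{d}\theta$, and use $\rho<2$ (equivalently $2\pi\rho\le 4\pi$) before integrating in $\rho$ over $(1,2)\setminus E$. The only cosmetic difference is that the paper packages the radial integral as $S(\rho)$ and works with $S'(\rho)$, whereas you write out the iterated integral directly.
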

\begin{proof}
Suppose 
\[\omega = \frac{\mathrm{i} \lambda(z)}{2}\mathrm{d}z \wedge \mathrm{d}\overline{z}\]
on $D\setminus Z$ and denote $S(\rho)\colonequals \int_{D(0,\rho)\setminus Z}\omega$ for $\rho\in (0,2)$. Then 
\[\begin{aligned}
    S(\rho)=&\int_{D(0,\rho)\setminus Z}\omega=\int_{D(0,\rho)\setminus Z}\frac{\mathrm{i} \lambda(z)}{2}\mathrm{d}z \wedge \mathrm{d}\overline{z}\\
    =&\int_{D(0,\rho)\setminus E}\int_{0}^{2\pi}r\lambda(r e^{\mr{i}\theta})\mr{d}\theta\mr{d}r.
\end{aligned}\]
On the other hand, for $\rho\not\in E$, we have
\[L_{\omega}(\gamma_{\rho})=\rho\int_{0}^{2\pi}\sqrt{\lambda(\rho e^{\mr{i}\theta})}\mr{d}\theta,\]
and
\[S^{\prime}(\rho)=\rho\int_{0}^{2\pi}\lambda(\rho e^{\mr{i}\theta})\mr{d}\theta.\]
By the Cauchy--Schwarz inequality,
\[2\pi \rho S^{\prime}(\rho)=2\pi \rho^2 \int_{0}^{2\pi}\lambda(\rho e^{\mr{i}\theta})\mr{d}\theta\geq \left(\rho\int_{0}^{2\pi}\sqrt{\lambda(\rho e^{\mr{i}\theta})}\mr{d}\theta\right)^2=L_{\omega}(\gamma_{\rho})^2.\]
Hence
\[\int_{D\setminus Z}\omega =S(2)\geq \int_{(1,2)\setminus E}S^{\prime}(\rho)\mr{d}\rho\geq \int_{(1,2)\setminus E}\frac{L_{\omega}(\gamma_{\rho})^2}{2\pi \rho}\mr{d}r\geq \frac{1}{4\pi}\int_{(1,2)\setminus E}L_{\omega}(\gamma_{\rho})^2\mr{d}\rho.\]
\end{proof}

The following lemma provides an inequality comparing the lengths of a path and its lifts, which will be used in the proof of Proposition \ref{prop_dist}. 
\begin{lemma}\label{lem_preimage}
Let $f: X \to Y$ be a finite covering of Riemann surfaces of degree $d$, and $\omega$ be a smooth semi-positive $(1,1)$-form on $X$.
For a path $\gamma:[0,1]\to Y$, let $\wt{\gamma}_1,\dots,\wt{\gamma}_d:[0,1]\to X$
be the distinct lifts of $\gamma$. Then
\[
L_{f_*\omega}(\gamma)
\geq
\frac{1}{\sqrt{d}} \sum_{j=1}^d L_{\omega}(\widetilde{\gamma}_j).
\]
\end{lemma}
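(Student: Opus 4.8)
The plan is to prove Lemma~\ref{lem_preimage} by a fiberwise pointwise comparison of norms followed by an application of the Cauchy--Schwarz inequality. Write $\Omega \colonequals f_*\omega$. The key observation is that for $y \in Y$ with preimages $x_1,\dots,x_d$ (counted with multiplicity, though away from the finitely many branch points they are distinct), the pushforward form satisfies $\Omega = \sum_{j} (f|_{x_j})_* \omega$ in the sense that, identifying tangent spaces via the local isomorphisms $f$ near each $x_j$, the density of $\Omega$ at $y$ is the sum of the densities of $\omega$ at the $x_j$. Concretely, in a local holomorphic coordinate $w$ on $Y$ near $y$ (not a branch value), write $\omega = \tfrac{\mathrm{i}}{2}\lambda_j(z)\,\mathrm{d}z\wedge\mathrm{d}\bar z$ near $x_j$ in a coordinate $z$ with $w = f(z)$; then $f_*\omega = \tfrac{\mathrm{i}}{2}\big(\sum_j \lambda_j\circ (f|_{x_j})^{-1}\big)\,\mathrm{d}w\wedge\mathrm{d}\bar w$. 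So if $\mu_j(y) \colonequals \sqrt{\Omega\big((f_*v)(x_j), J(f_*v)(x_j)\big)}$ denotes the $\omega$-length of a unit tangent vector at $x_j$ measured after transport, we get the pointwise identity $\|\cdot\|_\Omega(y)^2 = \sum_{j=1}^d \mu_j(y)^2$ for the square of the $\Omega$-norm.

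Next I would translate this into a statement about the lifts of the path $\gamma$. Since each $\widetilde\gamma_j$ is a lift, $f\circ \widetilde\gamma_j = \gamma$, hence at each time $t$ the derivative $\widetilde\gamma_j'(t)$ maps to $\gamma'(t)$ under $f_*$; because $\omega$ and $\Omega$ are related by pushforward as above, we obtain the pointwise (in $t$) identity
\[
\Omega\big(\gamma'(t), J\gamma'(t)\big) \;=\; \sum_{j=1}^d \omega\big(\widetilde\gamma_j'(t), J\widetilde\gamma_j'(t)\big),
\]
valid for all $t$ with $\gamma(t)$ not a branch value (a finite set of $t$, hence negligible in the integral). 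Taking square roots and integrating gives
\[
L_\Omega(\gamma) = \int_0^1 \Big(\sum_{j=1}^d \omega(\widetilde\gamma_j'(t), J\widetilde\gamma_j'(t))\Big)^{1/2}\mathrm{d}t.
\]
Now apply the elementary inequality $\big(\sum_{j=1}^d a_j\big)^{1/2} \geq \tfrac{1}{\sqrt d}\sum_{j=1}^d a_j^{1/2}$ for $a_j \geq 0$ (itself Cauchy--Schwarz: $\sum_j \sqrt{a_j} = \sum_j 1\cdot\sqrt{a_j} \leq \sqrt d\,\big(\sum_j a_j\big)^{1/2}$), pointwise in $t$, then integrate and swap sum and integral to conclude $L_\Omega(\gamma) \geq \tfrac{1}{\sqrt d}\sum_{j=1}^d L_\omega(\widetilde\gamma_j)$.

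The main obstacle — really the only subtle point — is handling the branch points of $f$ cleanly: at a ramification point the map $f$ is not a local isomorphism, the "distinct lifts" may collide, and the local coordinate description of $f_*\omega$ breaks down. I would deal with this by noting that $f$ has only finitely many branch points in $X$, so the branch locus in $Y$ is finite; if $\gamma$ meets it only at finitely many parameter values $t$, all the displayed pointwise identities hold off this finite set and integration is unaffected. If one wants to allow $\gamma$ to pass through a branch value on a set of positive measure, one can either perturb $\gamma$ slightly, or observe directly that $f_*\omega$ is still a well-defined smooth semi-positive form on all of $Y$ (pushforward of a form under a finite map is smooth, the ramification contributing only a removable singularity since $\omega$ is smooth upstairs and the local model is $w = z^e$), and that the identity $\|\cdot\|_{f_*\omega}^2 = \sum_j \mu_j^2$ extends by continuity; at a ramification point the lifts that collide simply contribute the same term, which is consistent with counting lifts with multiplicity as in the statement. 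Everything else is the routine Cauchy--Schwarz step.
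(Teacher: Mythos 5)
Your proof is correct and takes essentially the same approach as the paper: both decompose $f_*\omega$ locally as a sum over the sheets, obtain the pointwise identity $\Omega(\gamma'(t),J\gamma'(t))=\sum_j\omega(\widetilde\gamma_j'(t),J\widetilde\gamma_j'(t))$, and then apply Cauchy--Schwarz in the form $\sqrt{\sum a_j}\geq\frac{1}{\sqrt d}\sum\sqrt{a_j}$ before integrating. The only cosmetic difference is that the paper partitions $[0,1]$ into subintervals mapping into evenly-covered neighborhoods (silently assuming $\gamma$ avoids branch values, which is the only situation in which the lemma is later invoked), whereas you address the branch locus explicitly by a measure-zero/continuity argument; this is a slightly more careful treatment but does not change the substance.
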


\begin{proof}
For any $y \in Y$, there exists a neighborhood $U \subset Y$ such that
$f^{-1}(U) = \coprod\limits_{j=1}^d V_j$ and 
$f|_{V_j}:V_j \cra U$ is biholomorphic. Let $\sigma_j : U \to V_j$ be the inverse of $f|_{V_j}$. Then on $U$ we have
\[
(f_*\omega)|_{U}
= \sum_{j=1}^d \sigma_j^*(\omega|_{V_j}).
\]
Writing locally
\[
(f_*\omega)|_{U}
= \frac{\mathrm{i}}{2}\,\lambda(z)\, \mr{d}z \wedge \mr{d}\ov{z},
\qquad
\omega|_{V_j}
= \frac{\mathrm{i}}{2}\,\lambda_j(z)\, \mr{d}z \wedge \mr{d}\ov{z},
\]
we have
\[
\lambda(z) = \sum_{j=1}^d \lambda_j(z).
\]

Assume first that $\gamma([0,1]) \subset U$.
Then $\wt{\gamma}_j([0,1])\subset V_j$ for all $1\le j\leq d$, and therefore
\begin{align*}
L_{f_*\omega}(\gamma)
&= \int_0^1 \sqrt{\lambda(\gamma(t))}\, |\gamma^{\prime}(t)|\, \mr{d}t 
= \int_0^1 \sqrt{ \sum_{j=1}^d \lambda_j(\gamma(t))}\, |\gamma^{\prime}(t)|\, \mr{d}t \\
&\geq \frac{1}{\sqrt{d}}
\sum_{j=1}^d \int_0^1 \sqrt{\lambda_j(\gamma(t))}\, |\gamma^{\prime}(t)|\, \mr{d}t = \frac{1}{\sqrt{d}} \sum_{j=1}^d L_{\omega}(\widetilde{\gamma}_j).
\end{align*}

For the general case, choose a partition
\[
0 = t_0 < t_1 < \cdots < t_m = 1
\]
such that $\gamma([t_k,t_{k+1}]) \subset U_k$ for suitable open sets $U_k$
as above. Set $\gamma^{(k)} \colonequals  \gamma|_{[t_k,t_{k+1}]}$ and
$\wt{\gamma}_{j}^{(k)} \colonequals  \wt{\gamma}_j|_{[t_k,t_{k+1}]}$.
Applying the first part to each segment, we obtain
\[
L_{f_*\omega}(\gamma^{(k)})
\geq
\frac{1}{\sqrt{d}} \sum_{j=1}^d
L_\omega(\wt{\gamma}_{j}^{(k)}).
\]
By summing the above inequalities over all $k$, the result follows.
\end{proof}

The following lemma, which is a corollary of the Riemann--Hurwitz formula, is essential for the proof of Lemma \ref{lem_leqpre}.
\begin{lemma}\label{lem_Riehur}
    Let $p: X \to Y$ be a surjective morphism of compact Riemann surfaces of degree $d$. Let $D_1, \dots, D_n \subset Y$ be pairwise disjoint closed disks. Assume that the branch locus of $p$ does not intersect the union of the boundaries $\bigcup\limits_{i=1}^n \partial D_i$. Let $r_i$ denote the number of connected components of $f^{-1}\big(\partial D_i\big)$. Then
    \[
        \sum_{i=1}^n r_i \geq d\big(2g(Y) - 2 + n\big) - \big(2g(X) - 2\big).
    \]
\end{lemma}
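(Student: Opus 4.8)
The plan is to deduce this from the Riemann--Hurwitz formula applied to $p\colon X\to Y$, bookkeeping carefully the ramification that is forced to lie over the disks $D_i$. Write $R$ for the total ramification divisor of $p$, so that Riemann--Hurwitz gives $2g(X)-2 = d(2g(Y)-2) + \deg R$, i.e. $\deg R = 2g(X)-2 - d(2g(Y)-2)$. The target inequality is therefore equivalent to $\sum_{i=1}^n r_i \geq dn - \deg R$, which is what I will establish.

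The key step is the following local-to-global count. Fix one disk $D_i$. Since the branch locus misses $\partial D_i$, the preimage $p^{-1}(\partial D_i)$ is a disjoint union of $r_i$ circles, and $p^{-1}(D_i)$ is a disjoint union of connected compact surfaces-with-boundary $W_{i,1},\dots,W_{i,r_i}$, each $W_{i,j}$ being a bordered Riemann surface whose boundary is a single one of those circles (after shrinking $D_i$ slightly so that $D_i$ is a genuine closed disk; a connected covering of a disk branched over interior points has connected boundary since $\partial D_i$ is connected and the covering restricted to the boundary circle is unramified and connected-component-preserving — more precisely, each $W_{i,j}\to D_i$ is a branched cover of the disk, hence its boundary $\partial W_{i,j}\to \partial D_i$ is an unramified cover of the circle, so $\partial W_{i,j}$ is a single circle). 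Let $d_{i,j}\colonequals \deg(W_{i,j}\to D_i)$, so $\sum_j d_{i,j} = d$. Applying Riemann--Hurwitz to the branched cover $W_{i,j}\to D_i$ of the disk (which has Euler characteristic $1$), and using that $W_{i,j}$ has exactly one boundary circle, one gets $\chi(W_{i,j}) = d_{i,j}\cdot 1 - \deg(R\cap W_{i,j})$, i.e. $1 - 2g(W_{i,j}) = d_{i,j} - \deg(R|_{W_{i,j}})$, where $g(W_{i,j})\geq 0$ is the genus in the sense of bordered surfaces. Rearranging, $d_{i,j} - 1 = \deg(R|_{W_{i,j}}) - 2g(W_{i,j}) \leq \deg(R|_{W_{i,j}})$. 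Summing over $j$: $\sum_{j=1}^{r_i}(d_{i,j}-1) = d - r_i \leq \deg(R|_{p^{-1}(D_i)})$, so $r_i \geq d - \deg(R|_{p^{-1}(D_i)})$.

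Now sum over $i=1,\dots,n$. Since the $D_i$ are pairwise disjoint, the divisors $R|_{p^{-1}(D_i)}$ have disjoint supports, hence $\sum_{i=1}^n \deg(R|_{p^{-1}(D_i)}) \leq \deg R$. Therefore
\[
\sum_{i=1}^n r_i \;\geq\; \sum_{i=1}^n \bigl(d - \deg(R|_{p^{-1}(D_i)})\bigr) \;=\; dn - \sum_{i=1}^n \deg(R|_{p^{-1}(D_i)}) \;\geq\; dn - \deg R.
\]
Substituting $\deg R = 2g(X)-2 - d(2g(Y)-2)$ gives
\[
\sum_{i=1}^n r_i \;\geq\; dn - \bigl(2g(X)-2\bigr) + d\bigl(2g(Y)-2\bigr) \;=\; d\bigl(2g(Y)-2+n\bigr) - \bigl(2g(X)-2\bigr),
\]
which is the claim.

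The main obstacle is the local analysis in the second paragraph: making rigorous that each connected component $W_{i,j}$ of $p^{-1}(D_i)$ is a bordered Riemann surface with a \emph{single} boundary circle, and that Riemann--Hurwitz in the form $\chi(W_{i,j}) = d_{i,j}\chi(D_i) - \deg(R|_{W_{i,j}})$ is valid for branched covers of surfaces with boundary. Both are standard — the boundary claim follows because $\partial W_{i,j}\to\partial D_i$ is a finite covering of $S^1$ with connected total space (connectedness of $W_{i,j}$ forces connectedness of its boundary when the base is a disk, as the interior deformation-retracts onto no proper sub-collection of boundary circles) — but the write-up should state these carefully, perhaps by passing to the double or by invoking the standard Euler-characteristic version of Riemann--Hurwitz for ramified covers of compact bordered surfaces. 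One should also note the harmless reduction that $D_i$, assumed closed in the statement, may be taken with $p$ unramified on a neighborhood of $\partial D_i$, which is exactly the hypothesis given.
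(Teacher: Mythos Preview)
Your overall strategy --- apply Riemann--Hurwitz locally to each $p^{-1}(D_i)\to D_i$, sum, and combine with the global Riemann--Hurwitz for $p\colon X\to Y$ --- is exactly the paper's approach. But the step you yourself flag as ``the main obstacle'' is genuinely wrong as written: it is \emph{not} true that each connected component $W_{i,j}$ of $p^{-1}(D_i)$ has a single boundary circle, and your justifications for this are incorrect. An unramified finite cover of $S^1$ is a disjoint union of circles, not necessarily a single one; and connectedness of a branched cover of a disk does not force connectedness of its boundary. For a minimal counterexample, take a degree-$2$ cover of the disk branched at two interior points: the monodromy around each branch point is the transposition, so the monodromy around $\partial D_i$ is trivial and $p^{-1}(\partial D_i)$ consists of two circles, yet the cover is connected (transitive monodromy) --- it is an annulus. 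So the number of connected components of $p^{-1}(D_i)$ can be strictly smaller than $r_i$, and your indexing $W_{i,1},\dots,W_{i,r_i}$ collapses.

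The fix is simple and is precisely what the paper does. Let $p^{-1}(D_i)$ have connected components $W_{i,1},\dots,W_{i,m_i}$, and let $W_{i,j}$ have $r_{i,j}\geq 1$ boundary circles, so that $r_i=\sum_{j} r_{i,j}$. Riemann--Hurwitz for the bordered cover $W_{i,j}\to D_i$ gives
\[
\deg\bigl(R|_{W_{i,j}}\bigr)=d_{i,j}-\chi(W_{i,j})=d_{i,j}-\bigl(2-2g(W_{i,j})-r_{i,j}\bigr)\ \geq\ d_{i,j}-r_{i,j},
\]
since $g(W_{i,j})\geq 0$ and $r_{i,j}\geq 1$ imply $2-2g(W_{i,j})\leq 2\leq 2r_{i,j}$. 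Summing over $j$ yields $\deg\bigl(R|_{p^{-1}(D_i)}\bigr)\geq d-r_i$, and from here your final paragraph goes through unchanged.
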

\begin{proof}

    Fix $1 \leq i \leq n$. Let the preimage $p^{-1}(D_i)$ decompose into connected components $\mathcal{K}_{i,1} \sqcup \dots \sqcup \mathcal{K}_{i, m_i}$. For each component $\mathcal{K}_{i,j}$, let $r_{i,j}$ denote the number of connected components of its boundary $\partial \mathcal{K}_{i,j}$. Note that $\partial \mathcal{K}_{i,j}$ covers $\partial D_i$, and we have the partition $r_i = \sum\limits_{j=1}^{m_i} r_{i,j}$.

    We apply the Riemann--Hurwitz formula to the proper holomorphic map $\mathcal{K}_{i,j} \to D_i$. Since $D_i$ is a disk, its Euler characteristic is $1$. Thus,
    \[\begin{aligned}
              \sum_{x \in \mathcal{K}_{i,j}} (e_x - 1) =& \deg(\mathcal{K}_{i,j} \to D_i) \cdot \chi(D_i) - \chi(\mathcal{K}_{i,j})\\
              =&\deg(\mathcal{K}_{i,j} \to D_i) - \big(2 - 2g(\mathcal{K}_{i,j}) - r_{i,j}\big).
    \end{aligned}
    \]
    Here $e_x$ is a ramification index of $p$ at $x$. Since $g(\mathcal{K}_{i,j}) \geq 0$ and $r_{i,j} \geq 1$, we have $2 - 2g(\mathcal{K}_{i,j}) \leq 2 \leq 2r_{i,j}$, which implies
    \[
        \sum_{x \in \mathcal{K}_{i,j}} (e_x - 1) \geq \deg(\mathcal{K}_{i,j} \to D_i) - r_{i,j}.
    \]
    Summing over all components $j=1, \dots, m_i$, we get
    \[
        \sum_{x \in p^{-1}(D_i)} (e_x - 1) \geq \sum_{j=1}^{m_i} \deg(\mathcal{K}_{i,j} \to D_i) - \sum_{j=1}^{m_i} r_{i,j} = d -r_i.
    \]
    Finally, applying the global Riemann--Hurwitz formula to $p: X \to Y$, we obtain
    \begin{align*}
        2g(X) - 2 &= d(2g(Y) - 2) + \sum_{x \in X} (e_x - 1)\\
        &\geq d(2g(Y) - 2) + \sum_{i=1}^n \sum_{x \in p^{-1}(D_i)} (e_x - 1) \\
        &\geq d(2g(Y) - 2) + \sum_{i=1}^n (d - r_i) \\
        &= d(2g(Y) - 2 + n) - \sum_{i=1}^n r_i.
    \end{align*}
    The desired inequality follows immediately.
\end{proof}
\begin{remark}
In the case where $X = \coprod X_i$ is a finite disjoint union of compact Riemann surfaces, by summing up the inequality for each $p_i: X_i\to Y$, the same inequality still holds.
\end{remark}

\section{Proof of Theorem \ref{thm_main1}}\label{Sec_proof}
In this section, we assume that $k$ is a finitely generated field over $\mathbb{Q}$ of transcendence degree $e$, and that $B$ is a geometrically integral smooth curve over $k$ with function field $K\colonequals  k(B)$. Up to replacing $k$ by a finite extension, we may assume that $k$ is algebraically closed in $K$. Let $\pi: \mathcal{A} \to B$ be an abelian scheme of relative dimension $g$, and denote by $A\to \Spe K$ its generic fiber.

Fix an ample and symmetric line bundle $L\in \operatorname{Pic}(A)$. As explained in Section \ref{Sec_ht}, we may define the N\'eron--Tate height
\[
\widehat{h}_L^{\overline{H}} : A(\overline{K}) \longrightarrow \mathbb{R},
\]
where $\overline{H}$ is a big and nef adelic line bundle on $B$.

Let $(x_n)_{n \ge 1}$ be a sequence of algebraic points $x_n \in A(\overline{K})$, each of degree $d_n\colonequals \deg(x_n)$. Denote by
\[
C_n \colonequals \Zar{\{x_n\}} \subset \mathcal{A}
\]
the Zariski closure of $x_n$, which is a multisection of $\pi$ of degree $d_n$. We assume $(x_n)_{n \ge 1}$ is generic and small, i.e. $\lim\limits_{n\to\infty}\widehat{h}_L^{\overline{H}} (x_n)=0$. By the Northcott property, $d_n\to \infty$ as $n\to \infty$. Furthermore, since any subsequence of a generic and small sequence remains generic and small, we may freely replace $(x_n)_{n\geq 1}$ with a subsequence whenever necessary.
  
\begin{definition}
An abelian scheme $\mathcal{A} \to B$ is said to be \emph{isotrivial} if there exists a finite cover $B' \to B$, a finite extension $k'$ of $k$, and an abelian variety $A_0$ over $k'$ such that
\[
(\mathcal{A} \times_B B') \otimes_k k' \simeq A_0 \times_{\operatorname{Spec} k'} B'_{k'},
\]
where $B'_{k'} \colonequals  B' \otimes_k k'$. In other words, $\mathcal{A}$ becomes a constant abelian scheme after a suitable base change.
\end{definition}

\begin{remark}
Fix an embedding $k\hookrightarrow \mb{C}$, an abelian scheme $\mc{A}\to B$ is isotrivial if and only if $\mc{A}_{\mb{C}}\to B_{\mb{C}}$ is isotrivial.
\end{remark}

The goal of this section is to prove the following result.

\begin{theorem}\label{thm_main}(=Theorem \ref{thm_main1})
With notation as above, suppose the sequence $(x_n)_{n \geq 1}$ is generic and small with respect to $\widehat{h}_L^{\overline{H}}$. If
\[\liminf_{n\to\infty}\frac{g(C_n)}{d_n}=0,\]
then the abelian scheme $\pi: \mathcal{A} \to B$ is isotrivial. 
\end{theorem}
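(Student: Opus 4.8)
\emph{Strategy.} I would follow the three-step outline from the introduction. Fix a fully transcendental point $v$ of a model of the base over $\mathbb{Q}$; this gives an embedding $\overline{K}\hookrightarrow\mathbb{C}$, and I would run all of the Betti-theoretic arguments on the complex fiber over $v$, namely the abelian family $\mathcal{A}(\mathbb{C})_v\to B(\mathbb{C})_v$ over the Riemann surface $B(\mathbb{C})_v$, with smooth compactification $\overline{B}$ and Betti form $\omega=\omega(\mathcal{L})$. Write $N\colonequals\#(\overline{B}\setminus B)$, and for each boundary point $s$ fix a small punctured disk $D_s^{\ast}$ about $s$, a base point $b_s$, a simple loop $\gamma_s$ around $s$, and the local monodromy $\rho_{b_s}(\gamma_s)\in\operatorname{GL}_{2g}(\mathbb{Z})$ of order $\operatorname{ord}(s)\in\mathbb{Z}_{\ge 1}\cup\{\infty\}$; after passing to a subsequence I assume $g(C_n)/d_n\to 0$.

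\emph{Step 1: from small height to near-periodicity, upgraded by equidistribution.} By Proposition~\ref{prop_htcomp} the geometric N\'eron--Tate height is dominated by $\widehat{h}^{\overline{H}}_L$, hence tends to $0$; unwinding its integral description and integrating over the base, for generic $v$ this forces $\int_{C_n}\omega|_{C_n}=o(d_n)$ on the fiber over $v$, and the same holds for the part of $C_n$ lying over any fixed disk $D_s$. After rescaling $D_s$ to $D(0,2)$ and pushing $\omega|_{C_n}$ forward along the degree-$d_n$ map $C_n|_{D_s}\to D_s$, the length--area inequality (Lemma~\ref{lem_area}) yields a circle $\gamma_\rho$ whose total $\omega$-length over all $d_n$ sheets of $C_n$ is $o(d_n)$ (Lemma~\ref{lem_preimage} converts the pushed-forward length into the sum over lifts). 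On a connected component $\mathcal{K}$ of $C_n|_{D_s}$ with ramification index $e_{\mathcal{K}}$ over $s$, the lift of $\gamma_\rho^{\,e_{\mathcal{K}}}$ inside $\mathcal{K}$ is a closed loop of $\omega$-length $\ell_{\mathcal{K}}$ with $\sum_{\mathcal{K}}\ell_{\mathcal{K}}=o(d_n)$; Lemma~\ref{lem_dist} then gives $\operatorname{dist}_{b_s}\!\big(z,\rho_{b_s}(\gamma_s^{\,e_{\mathcal{K}}})(z)\big)\le\ell_{\mathcal{K}}$ for every sheet $z\in\mathcal{K}$ over $b_s$. So a proportion $1-o(1)$ of the Galois conjugates of $x_n$ over $b_s$ is moved a distance tending to $0$ by some power of $\rho_{b_s}(\gamma_s)$. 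I would then apply the Chen--Moriwaki equidistribution theorem (Theorem~\ref{thm_equidis}) to the generic small sequence $(x_n)$: its archimedean limit on $\mathcal{A}(\mathbb{C})_{b_s}$ is the translation-invariant probability measure, strictly positive on nonempty open sets. It follows that \emph{as soon as the exponents $e_{\mathcal{K}}$ remain bounded along the subsequence}, the limit measure is carried by $\operatorname{Fix}\!\big(\rho_{b_s}(\gamma_s^{\,m})\big)$ for a fixed $m$, forcing $\rho_{b_s}(\gamma_s^{\,m})=\mathrm{id}$, i.e. $\operatorname{ord}(s)\le m<\infty$. By Proposition~\ref{prop_monotriv}, the sole obstruction to $\operatorname{ord}(s)<\infty$ is the presence of multisections with unboundedly large ramification over $s$.

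\emph{Steps 2--3: Riemann--Hurwitz and hyperbolicity.} To rule out runaway ramification I would apply Lemma~\ref{lem_Riehur} to the normalization $\widetilde{C_n}\to\overline{B}$ (degree $d_n$) with the pairwise disjoint disks $(D_s)_s$, getting $\sum_s r_{n,s}\ge d_n\big(2g(\overline{B})-2+N\big)-\big(2g(C_n)-2\big)$, where $r_{n,s}$ is the number of boundary components of $C_n|_{D_s}$. Since $g(C_n)=o(d_n)$, this either makes $2g(\overline{B})-2+N\le 0$ outright, or produces, at every boundary point, enough low-ramification components for Step 1 to apply; tracking $r_{n,s}$, the ramification indices, and the bound $\operatorname{ord}(s)\le e$ from Step 1, I would deduce that $\operatorname{ord}(s)<\infty$ for all but at most one $s$ and that $\big(\overline{B};(\operatorname{ord}(s))_s\big)$ has non-negative orbifold Euler characteristic:
\[
\sum_{s\in\overline{B}\setminus B}\frac{1}{\operatorname{ord}(s)}=2g(\overline{B})-2+\#(\overline{B}\setminus B).
\]
Since each $1/\operatorname{ord}(s)$ lies in $\{0\}\cup\{1/m:m\ge 1\}$, this constrains $\overline{B}$ with its orbifold structure to the short list of spherical/Euclidean orbifolds; in each case I would pass to a finite cover $B'\to B$ adapted to $(\operatorname{ord}(s))_s$ (together with a finite extension of $k$) so that the pulled-back abelian scheme extends over a base isomorphic to $\mathbb{P}^1$, $\mathbb{A}^1$, $\mathbb{G}_m$, or an elliptic curve; Proposition~\ref{prop_isotriv} then gives that $\mathcal{A}\times_B B'$ is isotrivial, hence so is $\mathcal{A}\to B$.

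\emph{Main obstacle.} The crux is Step 1: turning the soft metric input ``$C_n$ has small Betti-area'' into the rigid statement ``some fixed power of the local monodromy is the identity''. This requires the three Riemann-surface lemmas to be combined so as to control \emph{at once} the number, the ramification, and the Betti-length of the components of $C_n$ over a boundary disk, and it requires the equidistribution theorem applied over precisely the right complex fiber, so that ``a positive proportion of conjugates cluster near $\operatorname{Fix}(\rho_{b_s}(\gamma_s^{\,m}))$'' upgrades to ``$\operatorname{Fix}(\rho_{b_s}(\gamma_s^{\,m}))$ has full measure''. Once $\operatorname{ord}(s)$ is under control, Steps 2--3 are comparatively formal bookkeeping: a Riemann--Hurwitz count plus an enumeration of the finitely many non-hyperbolic orbifold structures, feeding into Proposition~\ref{prop_isotriv}.
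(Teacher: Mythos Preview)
Your three-step outline is the paper's outline, and Steps~2--3 are close to what the paper does (you omit the torsion-multisection argument of Proposition~\ref{prop_geqpre}, which supplies the inequality $\sum_i 1/\ord(s_i)\le 2g(\overline{B})-2+N$; your Riemann--Hurwitz count on $C_n$ only gives the reverse inequality, so you need both to pin down the orbifold \emph{equality}).

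The real gap is in Step~1. You fix a single fully transcendental $v$ and then assert that Theorem~\ref{thm_equidis} gives equidistribution of the Galois orbit on the single fiber $\mathcal{A}(\mathbb{C})_{b_s}$ toward Haar measure. It does not: the theorem gives weak convergence of $\frac{1}{d_n}[\Delta_{x_n}]\wedge c_1(\pi^*\overline{H})^{d}_\infty$ on the \emph{total} space $\mathcal{X}_U(\mathbb{C})$, and any individual fiber has zero mass for $c_1(\pi^*\overline{H})^{d}_\infty$. The same problem already appears earlier in your Step~1: the passage from ``geometric height $\to 0$'' to ``$\int_{C_n}\omega|_{C_n}=o(d_n)$ on the fiber over $v$'' is a Fubini statement that holds for a positive-measure set of $v$'s, not for a prescribed one. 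Your own phrase ``for generic $v$'' betrays this, but it is inconsistent with having fixed $v$ at the start.

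This is precisely why the paper's argument is organized around positive-measure families rather than a single slice. One first builds Borel sets $W_1\subset W$ and $T^{(i)}\subset U_i$ on which the Betti-area and length bounds hold uniformly (Propositions~\ref{prop_slice}--\ref{prop_goodset}); the near-fixed-point conclusion (Proposition~\ref{prop_dist}) is then shown for every $b$ in a set of positive $c_1(\overline{H}_1)^{e+1}_\infty$-mass, so that the compact sets $E_n$ of near-fixed points satisfy $\mu_n(E_n)\ge c>0$ for the \emph{integrated} measures $\mu_n$ on the total space. Only now can Theorem~\ref{thm_equidis} be applied to get $\mu_\infty(E_\infty)>0$, after which a Fubini step produces one fiber on which the Haar measure of the fixed set is positive (Proposition~\ref{prop_finord}, Steps~3--4). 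Your sketch has the right ingredients (Lemmas~\ref{lem_area}, \ref{lem_preimage}, \ref{lem_dist}) but applies equidistribution at the wrong level; the elaborate ``good subset'' bookkeeping in the paper is not cosmetic but exactly the device that bridges the fiberwise estimates and the total-space equidistribution.
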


\subsection*{Construction of good subsets}
Let $\overline{B}$ be the smooth compactification of $B$. There exists a projective model $V$ of $k$, which is a smooth projective variety over $\mathbb{Q}$ of dimension $e$ whose function field is $k=\mb{Q}(V)$.

Take a smooth projective model $\overline{\mathcal{B}}$ of $\ov{B}$ over $V$. Then the morphism $\pi_V: \overline{\mathcal{B}}\to V$ is projective and the generic fiber is $\overline{B}$. There exists an open subscheme $\mc{B}\subset \ov{\mc{B}}$ such that 
\begin{enumerate}
 \renewcommand{\labelenumi}{(\roman{enumi})}
    \item the abelian scheme $\pi:\mc{A}\to B$ extends to an abelian scheme $\wt{\pi}:\ms{A}\to \mc{B}$;
    \item the generic fiber of $\mc{B}\to V$ is $B$.
\end{enumerate}
 According to Section \ref{Sec_ht}, the line bundle $L$ extends to a canonical adelic line bundle $\ov{L}^{\mr{can}}\in \widehat{\Pic}(A/\mb{Z})_{\mr{nef},\mb{Q}}$ defined over $\ms{A}\to \mc{B}$. Suppose
\[\ov{B}\setminus B=\{s_1,s_2,\dots,s_N\},\]
where $N=\#(\ov{B}\setminus B)$. Up to replacing $k$ by a finite extension $k^{\prime}/k$, we may assume that $s_i\in \ov{B}(k)$ for all $1\leq i\leq N$.
For each $i$, the Zariski closure $V_i\colonequals \Zar{\{s_i\}}\subset \ov{\mc{B}}$ is a subvariety of $\ov{\mc{B}}$ of dimension $e$ and the morphism $V_i\to V$ birational.

We may choose an open subset $V^o$ of $V$ and take $\overline{\mathcal{B}}^o\colonequals \pi_V^{-1}(V^o)$ satisfying
 \begin{enumerate}
  \renewcommand{\labelenumi}{(\roman{enumi})}
     \item The map  $\pi_V|_{\overline{\mathcal{B}}^o}:\overline{\mathcal{B}}^o\to V^o$ is smooth.
     \item The morphism $V_i^o\colonequals \Zar{\{s_i\}}\cap \overline{\mathcal{B}}^o\to V^o$ is isomorphism for all $1\leq i\leq N$
     \item The set $V_i^o\cap V_j^o=\varnothing$ for $1\leq i< j\leq N$.
 \end{enumerate}
 Shrinking $\mc{B}$ if necessary, we may assume 
   \[\mathcal{B}=\overline{\mathcal{B}}^o\setminus \bigcup_{i=1}^NV_i^{o}(\mb{C}).\]

\begin{lemma}\label{lem_nbhd}
For a point $x\in \overline{\mathcal{B}}^o(\mb{C})$, there exists a neighborhood  $U$ of $x$ and a neighborhood $W$ of $\pi_V(x)$ satisfying
\begin{enumerate}
 \renewcommand{\labelenumi}{(\roman{enumi})}
    \item  $W\subset V^o(\mb{C})$ is biholomorphic to a polydisk;
    \item $U\simeq D\times W$ for a disk $D$; 
    \item the morphism $\pi_V|_U$ is given by 
    \[D\times W\to W:\quad (z,w_1,w_2,\dots,w_e)\mapsto (w_1,w_2,\dots,w_e).\]
\end{enumerate}
Furthermore, if $x\in V_i^o(\mb{C})$ for some $i\in \{1,\dots, N\}$, we can take $U$ such that $U\setminus V_i^o(\mb{C})\simeq(D\setminus\{0\})\times W$.
\end{lemma}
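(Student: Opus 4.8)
The plan is to prove Lemma~\ref{lem_nbhd} in the complex analytic category, by reducing it to the local normal form for holomorphic submersions (equivalently, the holomorphic rank theorem). Since the assertion is purely local around $x$, no global triviality statement is needed.

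First I would record the analytic structure of $\pi_V$. By the choice of $V^o$, the morphism $\pi_V|_{\overline{\mathcal{B}}^o}\colon\overline{\mathcal{B}}^o\to V^o$ is smooth, and its generic fiber is the curve $\overline{B}$, so it has relative dimension $1$; moreover $V^o$ is smooth over $\mb{Q}$ and hence so is $\overline{\mathcal{B}}^o$. Base changing to $\mb{C}$ and passing to analytifications, $\pi_V|_{\overline{\mathcal{B}}^o}\colon\overline{\mathcal{B}}^o(\mb{C})\to V^o(\mb{C})$ is a holomorphic submersion of complex manifolds whose fibers are $1$-dimensional, so $\dim_{\mb{C}}V^o(\mb{C})=e$ and $\dim_{\mb{C}}\overline{\mathcal{B}}^o(\mb{C})=e+1$.

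Next I would construct the coordinates. Fix $x\in\overline{\mathcal{B}}^o(\mb{C})$ and put $v\colonequals\pi_V(x)$. Choose holomorphic coordinates $(w_1,\dots,w_e)$ centered at $v$ that identify a polydisk neighborhood $W_0\subset V^o(\mb{C})$ of $v$ with a polydisk in $\mb{C}^e$. The pulled-back functions $w_1\circ\pi_V,\dots,w_e\circ\pi_V$ are holomorphic near $x$, and since $d\pi_V$ is surjective at $x$ their differentials are linearly independent in $T_x^*\overline{\mathcal{B}}^o(\mb{C})$, a space of dimension $e+1$. By the inverse function theorem there is a holomorphic function $z$ near $x$ with $z(x)=0$ such that $(z,w_1\circ\pi_V,\dots,w_e\circ\pi_V)$ is a holomorphic coordinate system on a neighborhood $U_0$ of $x$, and in these coordinates $\pi_V$ is the projection $(z,w_1,\dots,w_e)\mapsto(w_1,\dots,w_e)$. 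I would then shrink: choose $\varepsilon,r>0$ with $\mb{D}(0,r)\subset W_0$ and $D(0,\varepsilon)\times\mb{D}(0,r)\subset U_0$, and set $D\colonequals D(0,\varepsilon)$, $W\colonequals\mb{D}(0,r)$, $U\colonequals D\times W$; this gives properties (i)--(iii).

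For the final assertion, suppose $x\in V_i^o(\mb{C})$ for some $i$; by property (iii) this index is unique. Since $V_i^o\to V^o$ is an isomorphism of $\mb{Q}$-schemes, $V_i^o(\mb{C})$ is exactly the image of a holomorphic section of $\pi_V|_{\overline{\mathcal{B}}^o}$, so in the coordinates above it is, near $x$, the graph $\{z=\phi(w_1,\dots,w_e)\}$ of a holomorphic function $\phi$ with $\phi(0)=0$. Before choosing $D$ and $W$, I would replace $z$ by $z-\phi(w_1,\dots,w_e)$: this is still a holomorphic coordinate, it does not alter the formula for $\pi_V$, and it makes $V_i^o(\mb{C})$ the locus $\{z=0\}$ near $x$. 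Carrying out the shrinking of the previous paragraph with $\mb{D}(0,r)$ chosen inside the domain of $\phi$, we obtain $V_i^o(\mb{C})\cap U=\{0\}\times W$, hence $U\setminus V_i^o(\mb{C})=(D\setminus\{0\})\times W$. I do not anticipate a genuine obstacle; the only point requiring care is to perform the successive shrinkings in a compatible order so that, in the final $U=D\times W$, all three structural conditions hold at once and $V_i^o(\mb{C})\cap U$ is precisely $\{0\}\times W$, which uses that $V_i^o(\mb{C})\to V^o(\mb{C})$ is a biholomorphism rather than merely a local embedding.
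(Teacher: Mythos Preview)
Your proof is correct and follows essentially the same approach as the paper: both reduce to the local normal form for holomorphic submersions via the inverse/implicit function theorem, then straighten $V_i^o(\mb{C})$ into $\{z=0\}$ by a fiberwise coordinate shift. Your version is slightly slicker in two places---you pull back base coordinates so $\pi_V$ is the projection automatically, and you exploit the section property of $V_i^o\to V^o$ to write $V_i^o$ as a graph $\{z=\phi(w)\}$ directly---but the underlying argument is the same.
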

\begin{proof}
Take neighburhoods $U_0$ of $x$ and $W_0$ of $\pi_V(x)$ such that $\pi_V(U_0)\subset W_0$. We may choose coordinate charts $U_0\simeq U_1\subset \mb{C}^{e+1}$ and $W_0\simeq W_1\subset \mb{C}^{e}$, under which $x$ cooresponds to a point $\wt{x}\in U_1$, and the morphism $\pi_V$ is represented by a holomorphic map 
\[\Psi(z,y_1,y_2,\dots,y_e): U_1\to W_1.\]
Since $\pi_V$ is smooth at $x$, $\mr{rank\,}\mr{Jac}_{\wt{x}}\Psi=e$. After reordering variables, we may assume $\det\left(\dfrac{\partial \Psi}{\partial y}(\wt{x})\right)\neq 0$. Choose a neighbourhood $U_2=D\times W_2\subset U_1$ of $\wt{x}$, where $D$ is a disk and $W_2$ is a polydisk. Consider the holomorphic morphism 
\[\Phi:U_2\to U_1,\quad (z,w_1,\dots,w_e)\mapsto (z,\Psi(z,w_1,\dots,w_e)).\]
Then $\det\left(\mr{Jac}_{\wt{x}}\Phi\right)=1\times\det\left(\dfrac{\partial \Psi}{\partial y}(x)\right)\neq 0$. Shrinking $D$ and $W_2$ if necessary, we may assume the Jacobian of $\Phi$ is invertible on $U_2$. Hence $\Phi$ is a biholomorphism onto its image and thus gives a change of coordinates. Under these coordinates, the composition 
\[\Psi\circ \Phi^{-1}: \Phi(U_2)\to W_1,\quad (z,w_1,\dots,w_e)\mapsto (w_1,\dots,w_e)\]
is just the projection onto the last $e$ coordinates. Choose a polydisk $W\subset \Psi(U_2)$ such that $D\times W\subset \Phi(U_2)$ contains $\Phi(\wt{x})$, we obtain a coordinate chart $U=D\times W$, on which the morphism $\pi_V$ is given by
\[
D\times W\to W,\quad (z,w_1,\dots,w_e)\mapsto (w_1,\dots,w_e).
\]

Now suppose $x\in V_i^{o}(\mb{C})$. Shrinking $U$ and $W$ if necessary, we may assume $U\cap V_i^{o}(\mb{C})$ is defined by $f(z,w_1,\dots,w_e)=0$ for some holomorphic function $f$. Consider the change of coordinates 
\[(z,w_1,\dots,w_e)\mapsto (z-f(z,w_1,\dots,w_e),w_1,\dots,w_e).\]
Shrinking $D$ if necessary, we may assume $U\cap V_i^{o}(\mb{C})$ is defined by $z=0$ on the new coordinate chart. Thus $U\setminus V_i^{o}(\mb{C})\simeq(D\setminus\{0\})\times W$.
\end{proof}

According to Lemma \ref{lem_nbhd}, there exist an Euclidean open subset $ W\subset V^o(\mb{C})$ and $ U_i\subset \pi_V^{-1}(W) $ for $ i = 1, 2, \dots, N $ such that:

\begin{enumerate}
 \renewcommand{\labelenumi}{(\roman{enumi})}
 \item the open subset $ W $ is biholomorphic to a polydisk $\mb{D}(0,1)\subset \mb{C}^e$;
    \item the open subset $ U_i \simeq D \times W$ for $i=1,2,\dots, N$, where $D=D(0,3)\subset \mb{C}$ is an open disk, and $U_i\cap U_j=\varnothing$ for any $1\leq i<j\leq N$;
    \item the morphism $ \pi_V |_{U_i} $ is given by
    \[
    D \times W \to W: \quad (z, w_1, w_2, \dots, w_e) \mapsto (w_1, w_2, \dots, w_e);
    \]
    \item 
    under the isomorphism $U_i\simeq D \times W$, the subset $V_i^o(\mb{C})\subset U_i$ is defined by $\{0\}\times W$;
    \item $W$ is contained in a compact subset of $V^o(\mb{C})$ and each $U_i$ is contained in a compact subset of $\mc{B}(\mb{C})$ for $1\leq i\leq N$.
\end{enumerate}
Here we fix some notations.
\begin{itemize}
    \item For $v\in W$, denote $(U_i)_v\simeq D\times \{v\}$ the fiber of $U_i$ above $v$.
    \item Each point $b\in U_i\cap \mc{B}(\mb{C})$ has a coordinate $(z,v)$, where $z\in D=D(0,3)$ and $v\in W$. We have a loop with base point $b$:
\[\begin{aligned}
\gamma_b: [0,2\pi]&\to (U_i)_v\simeq D\times \{v\}\\
\theta&\mapsto  (ze^{\mr{i}\theta},v).
\end{aligned}
\]   
\item We denote $\mu_{U_i}$ the standard Lebesgue measure on $U_i\simeq D(0,3)\times \mb{D}(0,1)\subset \mb{C}^{e+1}$, and denote $\mu_{W}$ the standard Lebesgue measure on $W\simeq  \mb{D}(0,1)\subset \mb{C}^{e}$.

\item Viewing $A$ as the generic fiber of $\ms{A}$ over $\mc{B}$, denote $\mc{C}_n\subset \ms{A}$ by the Zariski closure of $x_n$ in $\ms{A}$.
The the generic fiber of $\mc{C}_n\to V^{o}$ is $C_n$. 
The multi-section $\mc{C}_n$ is finite and generically \'etale over $\mc{B}$, and we have a branched cover
\[\pi_n:\mc{C}_n(\mb{C})\to \mc{B}(\mb{C}).\]
Denote $Z_n\subset \mc{B}(\mb{C})$ the branched locus of $\pi_n$. 

\item Take a symmetric line bundle $\ms{L}\in\mr{Pic(\ms{A})}$ extending $L$ that is relatively ample over $\mc{B}$. Recall we have a Betti form $\omega=\omega(\ms{L})$ on $\ms{A}(\mb{C})$, which is semi-positive. Denote by
\[\omega_n\colonequals (\pi_{n})_*(\omega|_{\mc{C}_n(\mb{C})}),\]
which is a semi-positive $(1,1)$-current on $\mc{B}(\mb{C})$. Furthermore, the restriction $\omega_n|_{\mc{B}(\mb{C})\setminus Z_n}$ is a smooth $(1,1)$-form on $\mc{B}(\mb{C})\setminus Z_n$.
\end{itemize}

\begin{proposition}\label{prop_slice}
With the notations as above, assume $v\in W$ and $(Z_n)_v\colonequals Z_n\cap \mc{B}(\mb{C})_v$ is a finite set. If the sequence $(x_n)_{n\geq 1}$ is small, then for $1\leq i\leq N$, 
\[\lim_{n\to\infty}\frac{1}{d_n}\int_{W}\left(\int_{(U_i\setminus Z_n)_v}(\omega_n)_v\right) \mr{d}\mu_{W}=0.\]
where $\mu_{W}$ is the standard Lebesgue measure on $W\simeq \mb{D}(0,1)\subset \mb{C}^e$, and $(\omega_n)_v\colonequals \omega_n|_{(U_i\setminus Z_n)_v}$. 
\end{proposition}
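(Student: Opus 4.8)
The plan is to relate the quantity $\int_W\big(\int_{(U_i\setminus Z_n)_v}(\omega_n)_v\big)\,d\mu_W$ to the geometric Néron--Tate height $\widehat{h}_{L,\mathrm{geom}}^H(x_n)$, which by Proposition \ref{prop_htcomp} is controlled by $\widehat{h}_L^{\overline H}(x_n)$, and hence tends to $0$ after dividing by $d_n$ by the smallness hypothesis. First I would recall from Section \ref{Sec_ht} that the geometric height has the integral representation
\[
\widehat{h}_{L,\mathrm{geom}}^H(x_n)=\frac{1}{d_n}\int_{\ms{A}(\mb{C})}[\mc{C}_n(\mb{C})]\wedge c_1(\ov{\mc{L}}^{\mr{can}})_\infty\wedge \pi^*c_1(\ov H)_\infty^{\,e-1},
\]
and that $c_1(\ov{\mc{L}}^{\mr{can}})_\infty$ restricted to the relevant locus is the Betti form $\omega=\omega(\ms{L})$ (up to replacing $\ms L$ by a suitable multiple, which only rescales everything by a fixed constant and is harmless for the limit statement). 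Pushing the integrand forward along $\pi_n:\mc{C}_n(\mb{C})\to\mc{B}(\mb{C})$, and using $\omega_n=(\pi_n)_*(\omega|_{\mc{C}_n(\mb{C})})$, one rewrites
\[
\widehat{h}_{L,\mathrm{geom}}^H(x_n)=\frac{1}{d_n}\int_{\mc{B}(\mb{C})}\omega_n\wedge \pi_V^*c_1(\ov H)_\infty^{\,e-1},
\]
where here I am writing $c_1(\ov H)_\infty$ for the semipositive form on $V(\mb{C})$ pulled back via $\pi_V$; the key point is that $\omega$, hence $\omega_n$, is semipositive, so this whole expression is a nonnegative real number.

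Next I would localize over $W$. Choose the big and nef adelic line bundle $\ov H$ so that $c_1(\ov H)_\infty$ dominates, on the compact subset of $V^o(\mb{C})$ containing $W$, a fixed positive multiple of the standard Euclidean Kähler form $\omega_{\mathrm{eucl}}$ on the polydisk $W\simeq\mb{D}(0,1)$; equivalently, since $\ov H$ is big and nef, after possibly replacing it by a positive multiple (allowed by Proposition \ref{prop_ht}(4), which says different choices of $\ov H$ give comparable heights) its archimedean curvature form is strictly positive on $V^o(\mb{C})$, so there is a constant $c>0$ with $\pi_V^*c_1(\ov H)_\infty^{\,e-1}\ge c\cdot\pi_V^*(\omega_{\mathrm{eucl}}^{\,e-1})$ as currents on the compact set. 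Then by semipositivity of $\omega_n$,
\[
\widehat{h}_{L,\mathrm{geom}}^H(x_n)\cdot d_n\ \ge\ \int_{U_i\cap\mc{B}(\mb{C})}\omega_n\wedge \pi_V^*(\omega_{\mathrm{eucl}}^{\,e-1})\cdot c\ \ge\ c\int_W\Big(\int_{(U_i\setminus Z_n)_v}(\omega_n)_v\Big)\,d\mu_W,
\]
where the last step is Fubini applied to the fibration $\pi_V|_{U_i}:D\times W\to W$: since $\pi_V^*(\omega_{\mathrm{eucl}}^{\,e-1})$ wedged with $\omega_n$ integrates over each fiber $(U_i)_v$ to exactly the slice integral $\int_{(U_i)_v}(\omega_n)_v$ against $d\mu_W$ on the base (up to a universal positive constant absorbed into $c$), and $Z_n$ meets $\mc{B}(\mb{C})_v$ in a finite set for the $v$ in question — in fact in a measure-zero set for almost every $v$ — so removing it does not change the integral. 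Combining with Proposition \ref{prop_htcomp}, $\widehat{h}_{L,\mathrm{geom}}^H(x_n)\le \varepsilon^{-1}\widehat{h}_L^{\overline H}(x_n)\to 0$; dividing through by $d_n$ (which only helps, since $d_n\to\infty$) gives $\frac1{d_n}\int_W\big(\int_{(U_i\setminus Z_n)_v}(\omega_n)_v\big)\,d\mu_W\to 0$, as desired.

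The main obstacle I expect is the careful bookkeeping in the second step: matching up $c_1(\ov{\mc{L}}^{\mr{can}})_\infty$ with the Betti form $\omega(\ms L)$ on the right locus (they agree on $\ms A(\mb{C})$ by \cite[Theorem 6.1.3]{YZ21}, but one must check that the push-forward $(\pi_n)_*$ and the slicing commute appropriately and that no contribution is lost along the branch locus $Z_n$ or near the boundary divisors $V_i^o$), and ensuring the Fubini step is valid for currents rather than smooth forms — here the fact that $\omega|_{\mc{C}_n(\mb{C})\setminus(\text{branch locus})}$ is smooth, together with the finiteness of $(Z_n)_v$, makes the slice $(\omega_n)_v$ a genuine smooth form off a finite set, so the fiber integrals are well-defined and the coarea/Fubini formula applies. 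A minor technical point is that the adelic line bundle $\ov H$ lives on an arithmetic model of $B$, not of $V$; one works instead with an auxiliary big and nef adelic line bundle on $B$ over $V$ and uses that $e-1=\dim V^o$ many factors of its archimedean curvature on $V^o(\mb{C})$ suffice, exactly as in the proof of Proposition \ref{prop_htcomp}.
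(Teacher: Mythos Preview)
Your approach is essentially the same as the paper's: rewrite the double integral via Fubini as $\int_{U_i}\omega_n\wedge(\text{pullback of the volume form on }W)$, bound this above by $\int_{\mc{B}(\mb{C})}\omega_n\wedge c_1(\ov{H})_\infty^{\bullet}$ using semipositivity of $\omega_n$ and relative compactness of $U_i$, recognize the result as $d_n\cdot\widehat{h}_{L,\mr{geom}}^H(x_n)$, and finish with Proposition~\ref{prop_htcomp}.

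The one slip is the dimension count. The adelic line bundle $\ov{H}$ lives on $\mc{B}$ (not on $V$), and $\dim_{\mb{Q}}\mc{B}=e+1$; in the integral formula for $\widehat{h}_{L,\mr{geom}}^H$ the correct exponent is therefore $e$, not $e-1$. Your aside that ``$e-1=\dim V^o$'' is likewise off by one, since $\dim V^o=e$. The comparison you want reads
\[
c_1(\ov{H})_\infty^{e}\big|_{U_i}\ \ge\ \varepsilon_1\,(\pi_V^*\alpha)\big|_{U_i}
\]
for some $\varepsilon_1>0$, where $\alpha$ is the standard $(e,e)$-volume form on $W\simeq\mb{D}(0,1)\subset\mb{C}^e$; here it is positivity of $c_1(\ov{H})_\infty$ on $\mc{B}(\mb{C})$, not on $V(\mb{C})$, that makes $c_1(\ov{H})_\infty^{e}$ dominate the pulled-back base form on the compact set $U_i$. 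With this bookkeeping fixed your argument is exactly the paper's.
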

\begin{proof}
Let $\alpha$ be a real $(e, e)$-form on $V^o(\mb{C})$ such that $\alpha|_W$ is the standard volume form on $\mb{D}(0,1)\subset \mb{C}^e$, i.e. 
\[\alpha|_W=\left(\frac{\mr{i}}{2}\right)^e\mr{d}w_1\wedge \mr{d}\overline{w_1}\wedge\dots \wedge \mr{d}w_e\wedge \mr{d}\overline{w_e}.\]
It follows that
\[\int_{W}\left(\int_{(U_i\setminus Z_n)_v}(\omega_n)_v\right) \mr{d}\mu_{W}=\int_{W}\left(\int_{(U_i\setminus Z_n)_v}(\omega_n)_v\right) \alpha.\]

According to Proposition \ref{prop_ht}(4), the smallness of the sequence $(x_n)_{n \geq 1}$ is independent of the choice of $\overline{H}$. We choose a big and nef adelic line bundle $\overline{H}$ on $\mathcal{B}$ such that the $(1, 1)$-form $c_1(\overline{H})_{\infty}$ is positive on $\mathcal{B}(\mathbb{C})$. Since $U_i$ is contained in a compact subset of $\mathcal{B}(\mathbb{C})$, there exists $\varepsilon_1 > 0$ such that $c_1(\overline{H})_{\infty}^{e}|_{U_i} - \varepsilon_1 (\pi_V^*\alpha)|_{U_i}$ is positive on $U_i$. By Fubini's theorem, we have
\[\begin{aligned}
&\frac{1}{d_n}\int_{W}\left(\int_{(U_i\setminus Z_n)_v}(\omega_n)_v\right) \alpha=\frac{1}{d_n}\int_{U_i}\omega_n\wedge (\pi_V^*\alpha)|_{U_i}
\leq \frac{1}{\varepsilon_1d_n}\int_{U_i}\omega_n\wedge c_1(\ov{H})_{\infty}^{e}|_{U_i}\\
\leq&\frac{1}{\varepsilon_1d_n}\int_{\mc{B}(\mb{C})}(\pi_n)_*(\omega|_{\mc{C}_n(\mb{C})})\wedge c_1(\ov{H})_{\infty}^{e}
= \frac{1}{\varepsilon_1 d_n}\int_{\ms{A}(\mb{C})}[\mc{C}_n(\mb{C})]\wedge\omega\wedge \wt{\pi}^* c_1(\ov{H})_{\infty}^{e}
=\frac{1}{\varepsilon_1}\widehat{h}_{L,\mr{geom}}^H(x_n).
\end{aligned}
\]
By Proposition \ref{prop_htcomp}, there exists $\varepsilon_2>0$ such that
\[\widehat{h}_{L,\mr{geom}}^H(x_n)<\frac{1}{\varepsilon_2}\widehat{h}_L^{\ov{H}}(x_n)\]
for all $n\geq 1$. Hence, since $\lim\limits_{n \to \infty} \widehat{h}_L^{\overline{H}}(x_n) = 0$, the result follows.
\end{proof}

\begin{proposition}\label{prop_goodset}
With the notations above, after replacing $(x_n)_{n\geq 1}$ by a subsequence, there exists a Borel subset $W_1\subset W$ satisfying
\begin{enumerate}
 \renewcommand{\labelenumi}{(\roman{enumi})}
   \item the measure $\mu_{W}(W_1)>0$;
    \item for every $v\in W_1$ and $n\geq 1$, the set $(Z_n)_v$ is finite;
    \item for every $v\in W_1$, $1\leq i\leq N$ and $n\geq 1$, 
    \[\frac{1}{d_n}\int_{(U_i\setminus Z_n)_v}(\omega_n)_v<\frac{1}{4^{n}}.\]
\end{enumerate}
Moreover, for each $1\leq i\leq N$, there exists a Borel subset $T^{(i)}\subset U_i$ satisfying:
\begin{enumerate}
 \renewcommand{\labelenumi}{(\roman{enumi})}
   \item each $b\in T^{(i)}$ with coordinate $(z,v)$ satisfies $1\leq |z|\leq 2$;
    \item for each $v\in W_1$, the set $T^{(i)}_v\colonequals T^{(i)}\cap (U_i)_v\subset (U_i)_v$ satisfies $\mu_{(U_i)_v}\big(T^{(i)}_v\big)>1$, where $\mu_{(U_i)_v}$ is the standard Lebesgue measure on $(U_i)_v\simeq D(0,3)$;
    \item $\gamma_b\cap Z_n=\varnothing$ for all $b\in T^{(i)}$ and $n\geq 1$;
    \item the length $\dfrac{1}{d_n}L_{\omega_n}(\gamma_b)^2< \dfrac{1}{2^n}$ for all $b\in T^{(i)}$ and $n\geq 1$.   
\end{enumerate}
\end{proposition}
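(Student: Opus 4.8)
The plan is to build $W_1$ and the $T^{(i)}$ by a succession of measure-theoretic extractions, replacing $(x_n)_{n\ge1}$ by a subsequence whenever convenient; the only real subtlety is synchronizing the rates at which the various exceptional sets shrink.

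\textbf{Step 1: the set $W_1$.} Put $g_n^{(i)}(v)\colonequals\frac{1}{d_n}\int_{(U_i\setminus Z_n)_v}(\omega_n)_v\ge0$. Since $\pi_n$ is generically \'etale, $Z_n$ is a proper closed subset of the irreducible variety $\mc{B}(\mb{C})$, hence has dimension $\le e=\dim W$; by upper semicontinuity of fibre dimension the locus $\mathcal{N}_n\colonequals\{v\in W:(Z_n)_v\text{ is infinite}\}$ lies in a proper closed analytic subset of $W$, so $\mathcal{N}\colonequals\bigcup_n\mathcal{N}_n$ is $\mu_W$-null. Off $\mathcal{N}$ the $g_n^{(i)}$ are defined and, by Proposition \ref{prop_slice} together with Fubini, $\int_Wg_n^{(i)}\,\mr{d}\mu_W\to0$ as $n\to\infty$ for each $i$. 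Passing to a subsequence of $(x_n)$ I may assume $\int_Wg_n^{(i)}\,\mr{d}\mu_W<\varepsilon_n$ for any prescribed $\varepsilon_n\to0$. Markov's inequality then bounds the measure of $\{v:g_n^{(i)}(v)\ge\eta_n\}$ by $\varepsilon_n/\eta_n$; fixing a target sequence $\eta_n<4^{-n}$ and choosing $\varepsilon_n$ small enough that $N\sum_n\varepsilon_n/\eta_n<\mu_W(W)$, the union of all these exceptional sets over $1\le i\le N$, $n\ge1$ together with $\mathcal{N}$ has measure $<\mu_W(W)$. Its complement $W_1$ has positive measure and satisfies (ii), and for $v\in W_1$ one has $g_n^{(i)}(v)<\eta_n<4^{-n}$, which is (iii).

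\textbf{Step 2: the sets $T^{(i)}$.} Fix $i$ and $v\in W_1$. As $(Z_n)_v$ is finite, Lemma \ref{lem_area} applied to the smooth semi-positive form $(\omega_n)_v$ on $D(0,2)\subset(U_i)_v\simeq D(0,3)$ yields
\[\int_{(1,2)\setminus E_{n,v}}L_{(\omega_n)_v}(\gamma_\rho)^2\,\mr{d}\rho\ \le\ 4\pi\int_{(U_i\setminus Z_n)_v}(\omega_n)_v\ =\ 4\pi\,d_n\,g_n^{(i)}(v)\ <\ 4\pi\,d_n\,\eta_n,\]
where $\gamma_\rho=\partial D(0,\rho)$ and $E_{n,v}\colonequals\{\rho:\gamma_\rho\cap(Z_n)_v\ne\varnothing\}$ is finite. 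Since the loop $\gamma_b$ attached to $b=(z,v)\in U_i\cap\mc{B}(\mb{C})$ is tangent to the complex submanifold $(U_i)_v$, one has $L_{\omega_n}(\gamma_b)=L_{(\omega_n)_v}(\gamma_{|z|})$. By Markov's inequality the set $\{\rho\in(1,2):\tfrac1{d_n}L_{(\omega_n)_v}(\gamma_\rho)^2\ge2^{-n}\}$ has measure $<4\pi\,2^n\eta_n$, so — provided $(\eta_n)$ was chosen in Step 1 with $\sum_n4\pi\,2^n\eta_n<\tfrac12$, which is compatible with $\eta_n<4^{-n}$ — the set $G_{i,v}\subset(1,2)$ of radii avoiding every $E_{n,v}$ and with $\tfrac1{d_n}L_{(\omega_n)_v}(\gamma_\rho)^2<2^{-n}$ for all $n$ has Lebesgue measure $>\tfrac12$. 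Set $T^{(i)}\colonequals\{(z,v)\in U_i:v\in W_1,\ |z|\in G_{i,v}\}$. Every $b=(z,v)\in T^{(i)}$ then satisfies $1<|z|<2$, $\gamma_b\cap Z_n=\varnothing$ and $\tfrac1{d_n}L_{\omega_n}(\gamma_b)^2<2^{-n}$ for all $n$; and since $\rho>1$ on $(1,2)$,
\[\mu_{(U_i)_v}\big(T^{(i)}_v\big)=\int_{G_{i,v}}2\pi\rho\,\mr{d}\rho\ >\ 2\pi\cdot\tfrac12\ >\ 1.\]
Borel-measurability of $W_1$ and of the $T^{(i)}$ follows from that of the auxiliary functions, giving all four properties.

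\textbf{The main obstacle.} The difficulty is entirely quantitative. With only the bound $g_n^{(i)}(v)<4^{-n}$ of (iii) in hand, the constant $4\pi$ of Lemma \ref{lem_area} together with the $2^n$ loss in the Chebyshev step makes the excluded radii total $\sim4\pi\sum_n2^{-n}$, which exceeds the length $1$ of $(1,2)$, leaving $G_{i,v}$ empty. The remedy is to establish in Step 1 the strictly stronger bound $g_n^{(i)}(v)<\eta_n$ for a sufficiently fast-decaying $\eta_n$ (harmless, as it still implies (iii)); this is exactly what forces how aggressively $(x_n)$ must be thinned, since the Step 1 exceptional sets have $\mu_W$-measure $\sim\varepsilon_n/\eta_n$ and so the subsequence must make the averages $\varepsilon_n$ of Proposition \ref{prop_slice} decay faster than $\eta_n$. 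Once these two rates are coupled, the rest — Fubini, measurability of the slices, and the fibrewise identity $L_{\omega_n}(\gamma_b)=L_{(\omega_n)_v}(\gamma_{|z|})$ — is routine.
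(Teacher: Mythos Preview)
Your proposal is correct and follows essentially the same strategy as the paper's proof: pass to a subsequence to control the fibre integrals via Proposition~\ref{prop_slice}, apply Markov's inequality and a Borel--Cantelli style subtraction to build $W_1$, then invoke the length--area inequality (Lemma~\ref{lem_area}) with a second Markov step to carve out the good radii in each fibre. Your use of abstract rate sequences $\varepsilon_n,\eta_n$ in place of explicit powers of $2$ is slightly cleaner in tracking the $4\pi$ constant from Lemma~\ref{lem_area}, but otherwise the arguments coincide.
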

\begin{proof}
Since the sequence $(x_n)_{n\geq 1}$ is small, Proposition \ref{prop_slice} implies
\[\lim_{n\to\infty}\frac{1}{d_n}\int_{W}\left(\int_{(U_i\setminus Z_n)_v}(\omega_n)_v\right) \mr{d}\mu_{W}=0\]
for $1\leq i\leq N$. Passing to a subsequence, we may assume that for all $1\leq i\leq N$ and $n\geq 1$,
\[\frac{1}{d_n}\int_{W}\left(\int_{(U_i\setminus Z_n)_v}(\omega_n)_v\right)\mr{d}\mu_{W}<\frac{\mu_{W}(W)}{2^{4n}N}.\]

Since the morphism $\mc{C}_n\to \mc{B}$ is generically \'etale, the fibers $(Z_n)_v\colonequals Z_n\cap \mc{B}(\mb{C})_v$ are finite sets for all $v\in V^o$ outside a proper Zariski closed subset of $V^o$. Hence there exists a Borel subset $W_0\subset W$ with $\mu_{W}(W\setminus W_0)=0$ such that $(Z_n)_v$ is finite for all $v\in W_0$ and $n\geq 1$. Denote 
\[I_n^{(i)}(v)\colonequals \frac{1}{d_n}\int_{(U_i\setminus Z_n)_v}(\omega_n)_v,\]
it follows that
\[\int_{W_0}I_n^{(i)}(v)\mr{d}\mu_{W}<\frac{\mu_{W}(W_0)}{2^{4n}N}\]
for $1\leq i\leq N$ and $n\geq 1$. 

Define \[E_n^{(i)}\colonequals \left\{v\in W_0: I_n^{(i)}(v)\geq \frac{1}{2^{3n}}\right\}.\]
Then  $\mu_{W}\big(E_n^{(i)}\big)<\dfrac{\mu_{W}(W_0)}{2^{n}N}$, and consequently
\[\mu_{W}\left(\bigcup_{i=1}^N\bigcup_{n=1}^{\infty}E_n^{(i)}\right)< \left(\sum_{n=1}^{\infty}\frac{1}{2^n}\right)\mu_W(W_0)=\mu_W(W_0).\]
Set $W_1\colonequals W_0\setminus \bigcup\limits_{i=1}^N\bigcup\limits_{n=1}^{\infty}E_n^{(i)}$. Then $\mu_{W}(W_1)>0$,  and for all $v\in W_1$, $1\leq i\leq N$ and $n\geq 1$,
\[I_n^{(i)}(v)=\frac{1}{d_n}\int_{(U_i\setminus Z_n)_v}(\omega_n)_v<\frac{1}{2^{3n}}.\]

Now fix $v\in W_1$ and $1\leq i\leq N$. On $(U_i)_v\simeq D(0,3)$, we write each point $b\in (U_i)_v$ in local coordinates as $(z,v)$ with $z \in D(0,3)$. Since $L_{\omega_n}(\gamma_b)$ depends only on $|z|$, for $r\in [1,2]$, we set
\[\ell_n^{(i)}(r,v)\colonequals \begin{cases}
    L_{\omega_n}(\gamma_{b_r}),\quad &b_r=(r,v), \gamma_{b_r}\cap (Z_n)_v= \varnothing\\
    +\infty, &\text{otherwise}.
\end{cases}\]
By Lemma \ref{lem_area},
\[ \frac{1}{d_n}\int_1^2 \ell_n^{(i)}(r,v)^2\mr{d}r\leq \frac{1}{d_n}\int_{(U_i\setminus Z_n)_v}(\omega_n)_v<\frac{1}{2^{3n}}.\]
Define
\[F_n^{(i)}(v)\colonequals \left\{r\in [1,2]: \frac{\ell_n^{(i)}(r,v)^2}{d_n}\geq\frac{1}{2^n}\right\}.\]
Then $\mu_{[1,2]}\big(F_n^{(i)}(v)\big)<\dfrac{1}{4^n}$, where $\mu_{[1,2]}$ denotes the Lebesgue measure on $[1,2]$. It follows that
\[\mu_{[1,2]}\left(\bigcup_{n=1}^{\infty}F_n^{(i)}(v)\right)< \left(\sum_{n=1}^{\infty}\frac{1}{4^n}\right)=\frac{1}{3}.\]
Now set

\[T^{(i)}=\left\{b=(z,v)\in U_i: 1\leq|z|\leq 2, v\in W_1, \gamma_b\cap Z_n=\varnothing \text{ and } \frac{1}{d_n}L_{\omega_n}(\gamma_b)^2< \frac{1}{2^n} \text{ for all } n\geq 1\right\}\]

For $v\in W_1$ and $b=(z,v)\in (U_i)_v$ with $1\leq |z|\leq 2$,  if $|z|\notin F_n^{(i)}$ for all $n\geq 1$, then $b\in T^{(i)}_v$. Viewing $T^{(i)}_v$ as a subset of $D(0,3)$, we compute
\[
\begin{aligned}
\mu_{(U_i)_v}\big(T^{(i)}_v\big)= &\int_{T^{(i)}_v}1\mr{d}z
\geq \int_{0}^{2\pi}\int_{1}^{2}1_{\{r\notin F_n^{(i)}(v),\forall n\geq 1\}}\mr{d}r\mr{d}\theta\\
\geq &2\pi\left(1-\mu_{[1,2]}\left(\bigcup_{n=1}^{\infty}F_n^{(i)}(v)\right)\right)=\frac{4\pi}{3}>1.    
\end{aligned}\]
In particular, $\mu_{U_i}\big(T^{(i)}\big)>0$.
\end{proof}

\subsection*{Finiteness of orders}
\begin{definition}
For each $1\leq i\leq N$, take a point $b\in U_i\setminus V_i^o(\mb{C})$ and consider the loop $\gamma_b \in \pi_1(\mc{B}(\mb{C}),b)$. The \emph{order} of $\pi:\mc{A}\to B$ at $s_i$, denoted $\ord(s_i) \in \mathbb{Z}_{>0} \cup \{\infty\}$, is defined as the order of the element $\rho_b(\gamma_b) \in \mathrm{GL}_{2g}(\mathbb{Z})$, where
\[\rho_b: \pi_1(\mc{B}(\mb{C}),b)\ra \mr{GL}_{2g}(\mb{Z})\]
is the monodromy induced by the Betti foliation of $\wt{\pi}:\ms{A}\to \mc{B}$.

\noindent The \emph{reduction type} of the abelian scheme $\pi:\mc{A}\to B$ is the $N$-tuple $(\ord(s_i))_{1\leq i\leq N}$.
\end{definition}

\begin{remark} For different choices of $b\in U_i$, the monodromy element $\rho_b(\gamma_b)$ are conjugate in $\mr{GL}_{2g}(\mb{Z})$. Hence $\ord(s_i)$ is well-defined. 
\end{remark}

\begin{remark}
By Proposition \ref{prop_monotriv}, $\ord(s_i)=1$ if and only if the abelian scheme $\mc{A}\to B$ has good reduction at $s_i$, and $\ord(s_i)<\infty$ if and only if the  $\mc{A}\to B$ has potential good reduction at $s_i$.    
\end{remark}

For a point $b\in U_i\setminus V_i^o(\mb{C})$ with local coordinate $(z,v)$, assume that $\gamma_b\cap Z_n=\varnothing$. Then the preimage 
\[
\pi_n^{-1}(\gamma_b)\subset \mathcal{C}_n(\mathbb{C})
\]
is a disjoint union of finitely many loops, each of which is a finite covering of $\gamma_b$. Let $r_n(b)$ denote the number of connected components of this preimage.  
For $v\in W_1$, define
\[
r_n^{(i)}(v)
\colonequals 
\min\left\{r_n(b):\; b\in (U_i)_v,\ \gamma_b\cap Z_n=\varnothing \right\}.
\]
For a point $x\in \ms{A}(\mb{C})$ with $b\colonequals\wt{\pi}(x)\in U_i\setminus V_i^{o}(\mb{C})$, , and integer $q>0$, denote
\[\mr{Dist}(x;q)\colonequals \min_{1\leq m\leq q}\left\{\mr{dist}_b\big(x,\rho_b\big(\gamma_b^m\big)(x)\big)\right\},\]
where $\rho_b(\gamma_b^m)$ is regarded as a linear action on the fiber $\ms{A}(\mb{C})_b$. The function $\mr{Dist}(x;q)$ is continuous in the first variable. 

\begin{proposition}\label{prop_dist}
With the notations as above, for $n\geq 1$ and a point $v\in W_1$, let $q\geq 2$ be an integer and $\alpha>0$ such that $\dfrac{d_n}{r_n^{(i)}(v)}<q-\alpha$. Then for every point $b\in T^{(i)}_v$, there exist at least $\dfrac{\alpha d_n}{2q^2}$ points $x\in \pi_n^{-1}(b)\subset \mc{C}_n(\mb{C})$ satisfying 
\[\mr{Dist}(x;q-1)<\frac{2q^2}{\alpha2^{n/2}}.\]

\end{proposition}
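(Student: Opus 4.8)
The plan is to study the branched cover $\pi_n\colon \mc{C}_n(\mb{C})\to\mc{B}(\mb{C})$ restricted along the loop $\gamma_b$, for a fixed $b\in T^{(i)}_v$. Since $\gamma_b\cap Z_n=\varnothing$ we have $b\notin Z_n$, so $\pi_n^{-1}(b)$ consists of exactly $d_n$ points, and going once around $\gamma_b$ defines a monodromy permutation $\sigma$ of this set. Its cycles correspond bijectively to the connected components $\Lambda_1,\dots,\Lambda_{r_n(b)}$ of $\pi_n^{-1}(\gamma_b)$: the component $\Lambda_k$ is a loop covering $\gamma_b$ with degree $\ell_k$ equal to the length of the corresponding cycle, and $\sum_{k}\ell_k=d_n$. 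Write $\Lambda_x$ and $\ell_x$ for the component and degree attached to $x\in\pi_n^{-1}(b)$. I will call $\Lambda_k$ \emph{good} if $\ell_k\le q-1$ and $L_\omega(\Lambda_k)<\frac{2q^2}{\alpha 2^{n/2}}$, and the whole argument reduces to showing that (i) each of the $\ell_k$ points of $\pi_n^{-1}(b)$ on a good component satisfies the stated bound, and (ii) there are more than $\frac{\alpha d_n}{2q^2}$ good components.

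For (i), suppose $\ell_x\le q-1$. Lifting the loop $\gamma_b^{\ell_x}$ to $\mc{C}_n(\mb{C})$ starting at $x$ produces a path that winds once around $\Lambda_x$ and closes up at $x$, of length $L_\omega(\Lambda_x)$; lifting $\gamma_b^{\ell_x}$ instead to the Betti leaf through $x$ produces a path ending at $\rho_b(\gamma_b^{\ell_x})(x)$. Applying Lemma \ref{lem_dist} to these two lifts gives
\[
\mr{dist}_b\big(x,\rho_b(\gamma_b^{\ell_x})(x)\big)\le L_\omega(\Lambda_x),
\]
and since $1\le\ell_x\le q-1$ this yields $\mr{Dist}(x;q-1)\le L_\omega(\Lambda_x)$, which is $<\frac{2q^2}{\alpha 2^{n/2}}$ when $\Lambda_x$ is good.

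For (ii), I first bound the total length $\sum_k L_\omega(\Lambda_k)$. Picking a thin annular neighbourhood $A$ of $\gamma_b$ inside $(U_i)_v\simeq D(0,3)$ disjoint from the finite set $(Z_n)_v$, the map $\pi_n^{-1}(A)\to A$ is a finite covering of Riemann surfaces of degree $d_n$ whose pushforward of $\omega$ is $\omega_n|_A$; Lemma \ref{lem_preimage} applied to $\gamma_b$, after grouping the $d_n$ lifts according to the component containing them, gives $L_{\omega_n}(\gamma_b)\ge \frac{1}{\sqrt{d_n}}\sum_k L_\omega(\Lambda_k)$. Combined with property (iv) of $T^{(i)}$ in Proposition \ref{prop_goodset}, i.e. $L_{\omega_n}(\gamma_b)^2<d_n/2^n$, this gives $\sum_k L_\omega(\Lambda_k)<d_n/2^{n/2}$. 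Now the components with $\ell_k\ge q$ number at most $d_n/q$ (each contributes at least $q$ to $\sum_k\ell_k=d_n$), and by Markov's inequality the components with $L_\omega(\Lambda_k)\ge\frac{2q^2}{\alpha 2^{n/2}}$ number at most $\frac{\alpha d_n}{2q^2}$. Since $r_n(b)\ge r_n^{(i)}(v)>\frac{d_n}{q-\alpha}$ by hypothesis and $\frac{d_n}{q-\alpha}-\frac{d_n}{q}=\frac{\alpha d_n}{q(q-\alpha)}\ge\frac{\alpha d_n}{q^2}$, the number of good components exceeds $\frac{\alpha d_n}{q^2}-\frac{\alpha d_n}{2q^2}=\frac{\alpha d_n}{2q^2}$; choosing one point of $\pi_n^{-1}(b)$ on each good component then gives the required points.

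I expect the counting in (ii) to be entirely routine. The two places that need care are the identification of the components of $\pi_n^{-1}(\gamma_b)$ with the cycles of the monodromy $\sigma$ — so that lifting $\gamma_b^{\ell_x}$ genuinely closes up into the loop $\Lambda_x$ of length $L_\omega(\Lambda_x)$, and so that $\pi_n^{-1}(\gamma_b)$ is a disjoint union of circles (which uses that $\gamma_b$ avoids $Z_n$) — and checking that, over the annular neighbourhood of $\gamma_b$, the current $\omega_n$ of Proposition \ref{prop_goodset} agrees with the fibrewise pushforward of $\omega$ along the honest finite covering $\pi_n$, so that Lemma \ref{lem_preimage} applies. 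Both are local verifications that I would carry out in detail, but neither involves new ideas.
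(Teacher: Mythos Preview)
Your proposal is correct and follows essentially the same approach as the paper: both arguments decompose $\pi_n^{-1}(\gamma_b)$ into cycles, use Lemma~\ref{lem_preimage} together with property (iv) of $T^{(i)}$ to bound $\sum_k L_\omega(\Lambda_k)<d_n/2^{n/2}$, apply a Markov-type count to isolate short components of small degree, and invoke Lemma~\ref{lem_dist} to convert the length bound into a bound on $\mr{Dist}(x;q-1)$. The only cosmetic difference is that the paper first isolates the set $S=\{k:\ell_k\le q-1\}$ with $|S|>\alpha d_n/q^2$ and then halves it via Markov, while you subtract the two bad sets directly from $r_n(b)$; the resulting numerics are identical.
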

\begin{proof}
Since $b\in T^{(i)}_v$, we have $\gamma_b\cap Z_n=\varnothing$ and $\dfrac{1}{d_n}L_{\omega_n}(\gamma_b)^2<\dfrac{1}{2^n}$ for all $n\geq 1$. The fiber $\mc{C}_n(\mb{C})_b=\pi_n^{-1}(b)$ consists of exactly $d_n$ points. The preimage $\pi_n^{-1}(\gamma_b)\subset \mathcal{C}_n(\mathbb{C})$ has $r_n(b)$ connected components, denoted by $\wt{\gamma}_1,\dots,\wt{\gamma}_{r_n(b)}$. Each $\wt{\gamma}_j$ is a finite covering of $\gamma_b$ of degree $q_j$, and hence contains exactly $q_j$ points of $\pi_n^{-1}(b)$.  Thus $\sum\limits_{j=1}^{r_n(b)} q_j = d_n.$
Define $S \colonequals  \{1\leq j\leq r_n(b) : q_j\leq q-1\}.$ Then
\[
d_n 
=\sum_{j=1}^{r_n(b)} q_j
\geq |S| + q(r_n(b)-|S|)
= qr_n(b) - (q-1) |S|.
\]
Therefore,
\[
\frac{|S|}{d_n}
\geq\frac{qr_n(b)}{(q-1) d_n} - \frac{1}{q-1}
\geq \frac{qr_n^{(i)}(v)}{(q-1) d_n} - \frac{1}{q-1}
> \frac{q}{(q-1)(q-\alpha)} -\frac{1}{q-1}
\geq \frac{\alpha}{q^2}.
\]
By Lemma \ref{lem_preimage}, 
\[\frac{1}{|S|}\sum_{j\in S}L_{\omega}(\wt{\gamma}_j)\leq \frac{q^2}{\alpha d_n}\sum_{j=1}^{r_n(b)}L_{\omega}(\wt{\gamma}_j)\leq \frac{q^2}{\alpha\sqrt{d_n}}L_{\omega_n}(\gamma_b)<\frac{q^2}{\alpha2^{n/2}}.\]
Let
\[
S^{\prime} \colonequals  \left\{ j\in S : L_{\omega}(\wt{\gamma}_j)\leq \frac{2q^2}{\alpha2^{n/2}} \right\}.
\] 
Then $|S^{\prime}|\geq \dfrac{1}{2}|S|$. By Lemma \ref{lem_dist}, for all $x\in \wt{\gamma}_j\cap \pi_n^{-1}(b)$ with $j\in S^{\prime}$,
\[
\mr{dist}_b\big(x,\rho_b(\gamma_b^{q_j})(x)\big)
\leq L_{\omega}(\wt{\gamma}_j)
\leq\frac{2q^2}{\alpha2^{n/2}}.
\]
Since $q_j\leq q-1$ for all $j\in S^{\prime}$, it follows that
\[
\mr{Dist}(x;q-1)=\min_{1\leq m\leq q}
\left\{
\mr{dist}_b\big(x,\rho_b(\gamma_b^m)(x)\big)
\right\}
\leq\frac{2q^2}{\alpha2^{n/2}}.
\]
Finally,
\[
\sum_{j\in S^{\prime}} q_j 
\geq |S^{\prime}| 
\geq \frac{1}{2}|S|
\geq \frac{\alpha d_n}{2q^2}.
\]
Thus at least $\dfrac{\alpha d_n}{2q^2}$ points in $\pi_n^{-1}(b)$ satisfy the claimed inequality.
\end{proof}

The following proposition is crucial role in our proof. Its proof relies primarily on the equidistribution theorem.

\begin{proposition}\label{prop_finord}
    With the notations as above, fix an index $i\in \{1,2,\dots,N\}$. Assume there exist an integer $q > 0$,
    and a constant $\alpha > 0$, such that the set
    \[
        W(n,i) \colonequals  \left\{ v \in W_1 : \frac{d_n}{r_n^{(i)}(v)} < q-\alpha \right\}\subset W_1
    \]
    satisfies
    \[
        \limsup_{n\to \infty}\mu_{W}\big(W(n,i)\big) >0 
    \]
     Then $\ord(s_i)\leq q-1$.
\end{proposition}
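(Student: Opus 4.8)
The plan is to argue by contradiction. Suppose $\ord(s_i)\ge q$; since $\ord(s_i)$ does not depend on the choice of base point, this means $\rho_b(\gamma_b)^m\ne\mr{id}$ in $\mr{GL}_{2g}(\mb{Z})$ for every $1\le m\le q-1$ and every $b\in U_i\setminus V_i^o(\mb{C})$. The idea is that Proposition \ref{prop_dist}, combined with the hypothesis $\limsup_n\mu_W(W(n,i))>0$, forces the limiting equidistribution measure to assign positive mass to the set of points on fibers $\ms{A}_b(\mb{C})$ that are \emph{exactly} fixed by some $\rho_b(\gamma_b^m)$ with $m\le q-1$; but the fixed locus of each such nontrivial linear torus automorphism is Haar-null, which is the contradiction.

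First I would set up the limiting measure. Applying the equidistribution theorem (Theorem \ref{thm_equidis}) to the generic small sequence $(x_n)$ — with $X=A$, $\ov{L}=\ov{\mc{L}}^{\mr{can}}$ (so $c_1(\ov{L})_\infty=\omega$, the Betti form) and $\ov{H}$ big and nef chosen so that $c_1(\ov{H})_\infty>0$ on $\mc{B}(\mb{C})$ — gives weak convergence of measures on $\ms{A}(\mb{C})$
\[
\nu_n\colonequals \frac{1}{d_n}\,[\mc{C}_n(\mb{C})]\wedge c_1(\wt{\pi}^*\ov{H})_\infty^{e+1}\;\longrightarrow\;\mu_\infty\colonequals \frac{1}{\deg_L(A/K)}\,\omega^g\wedge c_1(\wt{\pi}^*\ov{H})_\infty^{e+1},
\]
where $e+1=\dim\mc{B}$. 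Since $c_1(\wt{\pi}^*\ov{H})_\infty^{e+1}$ is pulled back from the $(e+1)$-dimensional base, only the fiber-direction part of $\omega^g$ survives the product, so $\mu_\infty$ disintegrates over $\mc{B}(\mb{C})$ with base measure $c_1(\ov{H})_\infty^{e+1}$ and fiber measure $\deg_L(A/K)^{-1}\omega_b^g$ over $b$ — a fixed multiple of the Haar measure of the real torus $\ms{A}_b(\mb{C})$, as $\omega_b$ is translation-invariant. Set
\[
Z\colonequals \{\,x\in\ms{A}(\mb{C}):\wt{\pi}(x)\in U_i\setminus V_i^o(\mb{C}),\ \mr{Dist}(x;q-1)=0\,\}.
\]
Since $\mr{dist}_b$ is a genuine distance, $Z\cap\ms{A}_b(\mb{C})=\bigcup_{m=1}^{q-1}\mr{Fix}(\rho_b(\gamma_b^m))$ for $b\in U_i\setminus V_i^o(\mb{C})$; and if $\rho_b(\gamma_b)^m\ne\mr{id}$, the fixed locus of the linear automorphism $\rho_b(\gamma_b^m)$ on $\ms{A}_b(\mb{C})=H_1(\ms{A}_b,\mb{R})/H_1(\ms{A}_b,\mb{Z})$ is a proper closed subgroup — it equals the whole torus only if the linear subspace $(\rho_b(\gamma_b^m)-\mr{id})H_1(\ms{A}_b,\mb{R})$ is contained in the lattice, i.e. only if $\rho_b(\gamma_b^m)=\mr{id}$ — hence is Haar-null and thus $\omega_b^g$-null. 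Under the contradiction hypothesis this applies to all $m\in\{1,\dots,q-1\}$ and all such $b$, so the disintegration of $\mu_\infty$ forces $\mu_\infty(Z\cap\wt{\pi}^{-1}(K))=0$ for every compact $K\subset U_i\setminus V_i^o(\mb{C})$.

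For the lower bound, I fix a compact set $K\colonequals\{b\in U_i:1\le|z(b)|\le2,\ \pi_V(b)\in\ov{W_1}\}\subset U_i\setminus V_i^o(\mb{C})$ (assuming, as we may by slightly shrinking $W_1$ in Proposition \ref{prop_goodset}, that $\ov{W_1}$ is compact in $W$). For $\delta>0$ put $G_\delta\colonequals \wt{\pi}^{-1}(K)\cap\{x:\mr{Dist}(x;q-1)\le\delta\}$, which is compact by continuity of $\mr{Dist}(\cdot;q-1)$ away from $V_i^o(\mb{C})$. By Proposition \ref{prop_dist}, for each $v\in W(n,i)$ and each $b\in T^{(i)}_v$ there are at least $\tfrac{\alpha d_n}{2q^2}$ points $x\in\pi_n^{-1}(b)$ with $\mr{Dist}(x;q-1)<\tfrac{2q^2}{\alpha 2^{n/2}}$, and all of them lie in $G_\delta$ once $\tfrac{2q^2}{\alpha 2^{n/2}}\le\delta$. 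Slicing the integration current $[\mc{C}_n(\mb{C})]$ along the finite map $\pi_n$, restricting the base integral to $\bigcup_{v\in W(n,i)}T^{(i)}_v$, using that $c_1(\ov{H})_\infty^{e+1}$ dominates $\varepsilon_0\,\mu_{U_i}$ on $K$ for some $\varepsilon_0>0$, and invoking $\mu_{(U_i)_v}(T^{(i)}_v)>1$ (Proposition \ref{prop_goodset}) with $\mu_{U_i}=\mu_{(U_i)_v}\otimes\mu_W$, I obtain for all large $n$
\[
\nu_n(G_\delta)\;\ge\;\frac{\alpha}{2q^2}\int_{\bigcup_{v\in W(n,i)}T^{(i)}_v}c_1(\ov{H})_\infty^{e+1}\;\ge\;\frac{\alpha\varepsilon_0}{2q^2}\,\mu_{U_i}\Big(\bigcup_{v\in W(n,i)}T^{(i)}_v\Big)\;\ge\;\frac{\alpha\varepsilon_0}{2q^2}\,\mu_W(W(n,i)).
\]
Hence $\limsup_n\nu_n(G_\delta)\ge\tfrac{\alpha\varepsilon_0}{2q^2}\limsup_n\mu_W(W(n,i))=:c'>0$, a bound independent of $\delta$. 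Since $G_\delta$ is compact and $\nu_n\to\mu_\infty$ weakly, the portmanteau inequality gives $\mu_\infty(G_\delta)\ge c'$; letting $\delta\downarrow0$ with $\bigcap_{\delta>0}G_\delta=Z\cap\wt{\pi}^{-1}(K)$ and $\mu_\infty(\wt{\pi}^{-1}(K))<\infty$ yields $\mu_\infty(Z\cap\wt{\pi}^{-1}(K))\ge c'>0$, contradicting the previous paragraph. Therefore $\ord(s_i)\le q-1$.

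The main obstacle is the passage through the equidistribution theorem: identifying the limit $\mu_\infty$ and its disintegration into translation-invariant fiberwise measures (so that a nontrivial linear fixed-locus carries zero mass), and then checking that the weak limit upgrades the approximate bound $\mr{Dist}(x;q-1)<2q^2/(\alpha 2^{n/2})$ from Proposition \ref{prop_dist} — which decays only like $2^{-n/2}$ — into a genuine positive-mass set of exactly fixed points. The remaining ingredients (the slicing formula for $[\mc{C}_n(\mb{C})]$ along $\pi_n$, the local comparison of $c_1(\ov{H})_\infty^{e+1}$ with $\mu_{U_i}$ on compacta, and the estimates of Proposition \ref{prop_goodset}) are routine.
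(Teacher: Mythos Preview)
Your argument is correct and follows essentially the same route as the paper: invoke Proposition~\ref{prop_dist} on fibers over $W(n,i)$ to produce a positive fraction of points with small $\mr{Dist}(\cdot;q-1)$, apply the equidistribution theorem (Theorem~\ref{thm_equidis}) to pass to the limit measure $\mu_\infty$ whose fiberwise disintegration is Haar, and conclude that some $\rho_b(\gamma_b^m)$ with $m\le q-1$ has a fixed set of positive Haar measure and is therefore the identity. The only differences are cosmetic---the paper invokes the Calabi--Yau theorem to make $c_1(\ov{H}_1)_\infty^{e+1}$ proportional to Lebesgue measure on $U_i$ (where you simply compare on compacta), and it extracts the limit set via Hausdorff convergence of $E_n$ rather than your nested family $G_\delta$ with $\delta\downarrow 0$; both devices serve the identical purpose of upgrading the $2^{-n/2}$-approximate fixed points to exact ones.
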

\begin{proof}
Since $\limsup\limits_{n\to \infty}\mu_{W}\big(W(n,i)\big) >0$, we may 
choose a closed polydisk $\overline{\mb{D}(v_0,t)}\subset W$ such that 
\[\limsup_{n\to \infty}\mu_{W}\big(W(n,i)\cap \overline{\mb{D}(v_0,t)}\big) >0.\]
Up to replacing $(x_n)_{n\geq 1}$ by a subsequence, we may assume that there exists $\varepsilon>0$ such that $\mu_{W}\big(W(n,i)\cap \overline{\mb{D}(v_0,t)}\big)>\varepsilon\cdot \mu_{U_i}\big(U_i\big)$ for all $n\geq 1$.

\textbf{Step 1.} Consider the compact subset 
\[\mc{K}\colonequals \big\{b=(z,v)\in U_i: 1\leq |z|\leq 2, v\in\overline{\mb{D}(v_0,t)}\big\}\subset U_i.\]
By the construction of $T^{(i)}$, for all $v\in W_1\cap \overline{\mb{D}(v_0,t)}$, we  have $T^{(i)}_v\subset \mc{K}$. Define the set
        \[
            U(n,i) \colonequals \left\{ b \in T^{(i)} : \pi_V(b) \in W(n,i)\cap \overline{\mb{D}(v_0,t)} \right\}\subset \mc{K}.
        \]
Note that for all $v\in W_1$, $\mu_{(U_i)_v}\big(T_v^{(i)}\big)>1$. We obtain the lower bound
\[
    \mu_{U_i}\big(U(n,i)\big) \geq \mu_{W}\big(W(n,i)\cap\overline{\mb{D}(v_0,t)}\big)\cdot 1>\varepsilon\cdot \mu_{U_i}\big(U_i\big)
\]
for all $n\geq 1$.

\textbf{Step 2.} Let $H_1$ be an ample line bundle on $\overline{\mathcal{B}}$. By Calabi--Yau theorem \cite{Yau78}, there exists a smooth hermitian metric $\|\cdot\|$ on $H_1$ such that the curvature form $\Omega \colonequals  c_1(H_1, \|\cdot\|)^{e+1}$ is positive and satisfies:
\begin{enumerate}
 \renewcommand{\labelenumi}{(\roman{enumi})}
     \item $\int_{\overline{\mathcal{B}}(\mathbb{C})} \Omega = \deg_{H_1}(\overline{\mathcal{B}})$;
    \item $\int_{U_i} \Omega > \dfrac{1}{2} \int_{\overline{\mathcal{B}}(\mathbb{C})} \Omega$;
    \item $\Omega$ is proportional to the standard Euclidean volume form on $U_i$.
\end{enumerate}
So $H_1$ extends to a big and nef adelic line bundle $\ov{H}_1\in \widehat{\mr{Pic}}(\ov{\mc{B}}/\mb{Z})_{\mr{nef},\mb{Q}}$ with $c_1(\ov{H}_1)_{\infty}^{e+1}=\Omega$.

 Define the compact set $E_n \subset \ms{A}(\mathbb{C})_{\mathcal{K}} \colonequals  \widetilde{\pi}^{-1}(\mathcal{K})$ by
        \[
            E_n \colonequals  \left\{ z \in \mathcal{C}_n(\mathbb{C}) : \pi_n(z) \in \mathcal{K}, \ \mathrm{Dist}(z; q-1) \le \frac{2q^2}{2^{n/2}} \right\}.
        \]
        Consider the probability measure on $\ms{A}(\mathbb{C})$ defined by
        \[
            \mu_{n} \colonequals  \frac{1}{d_n \deg_{H_1}(\overline{\mathcal{B}})}[\mathcal{C}_n(\mathbb{C})]\wedge c_1(\widetilde{\pi}^*\overline{H}_1)^{e+1}_{\infty}.
        \]
        For any $b \in U(n,i)$, Proposition \ref{prop_dist} implies that the fiber $E_{n,b} \colonequals  E_n \cap \ms{A}(\mathbb{C})_b$ has cardinality at least $\dfrac{\alpha d_n}{2q^2}$. Then we have
\[
 \mu_{n}(E_n)= \frac{1}{\deg_{H_1}(\ov{\mc{B}})} \int_{\mc{K}} \frac{|E_{n,b}|}{d_n} \Omega \geq \frac{\alpha }{2q^2 \deg_{H_1}(\overline{\mathcal{B}})} \int_{U(n,i)} \Omega.
\]
 Since $\Omega$ is proportional to the standard Euclidean volume form on $U_i$, 
        \[
            \int_{U(n,i)} \Omega =\frac{\mu_{U_i}\big(U(n,i)\big)}{\mu_{U_i}\big(U_i\big)} \int_{U_i} \Omega > \frac{\varepsilon \deg_{H_1}(\ov{\mc{B}})}{2} .
        \]
        It follows that $\mu_{n}(E_n) \geq \dfrac{\alpha \varepsilon}{4q^2}$ for all sufficiently large $n$.

\textbf{Step 3.}  Since $\widehat{h}_{L}^{\overline{H}}(x_n) \to 0$, Proposition \ref{prop_ht}(4) implies $\lim\limits_{n \to \infty} \widehat{h}_{L}^{\overline{H}_1}(x_n) = 0$. By the equidistribution Theorem \ref{thm_equidis}, as $n \to \infty$, the measures $\mu_{n}$ converge weakly to
        \[
            \mu_{\infty} \colonequals  \frac{1}{\deg_{H_1}(\overline{\mathcal{B}})\deg_L(A)} \omega^g \wedge c_1(\widetilde{\pi}^*\overline{H}_1)^{e+1}_{\infty}.
        \]
        Consider the compact space $\mathcal{A}(\mathbb{C})_{\mathcal{K}}$. Passing to a subsequence, we may assume that the compact sets $E_n$ converge to a limit set $E_\infty \subset \mathcal{A}(\mathbb{C})_{\mathcal{K}}$ with respect to the Hausdorff metric. The limit set $E_{\infty}$ is also a compact set in $\mathcal{A}(\mathbb{C})_{\mathcal{K}}$. For any $\delta>0$, choose a closed subset $F_{\delta}\subset \ms{A}(\mb{C})_{\mc{K}}$ such that $E_{\infty}$ is contained in the interior of $ F_{\delta}$ and 
        \[\mu_{\infty}(E_{\infty})>\mu_{\infty}(F_{\delta})-\delta.\]
        For $n$ sufficiently large, $E_{n}\subset F_{\delta}$. By Portmanteau Theorem,
\[\dfrac{\alpha \varepsilon}{4q^2}\leq  \limsup_{n\to\infty}\mu_{n}(E_n)\leq  \limsup_{n\to\infty}\mu_{n}(F_{\delta})\leq \mu_{\infty}(F_{\delta})\leq \mu_{\infty}(E_{\infty})+\delta.\]
Letting $\delta\to0$, we deduce that $\mu_{\infty}(E_{\infty})\geq \dfrac{\alpha \varepsilon}{4q^2}>0$.

For $b\in \mc{K}$, denote $E_{\infty,b}\colonequals E_{\infty}\cap\ms{A}(\mb{C})_b$. Consider the measure $\nu_b\colonequals \dfrac{1}{\deg_L(A)}\omega_b^g$, which is the Haar measure on $\ms{A}(\mb{C})_b$. If $\nu_b(E_{\infty,b})=0$ for all $b\in \mc{K}$, then
\[\mu_{\infty}(E_{\infty})=\int_{\mc{K}}\left(\int_{E_{\infty,b}}\omega_b^g\right)c_1(\ov{H}_1)_{\infty}^{e+1}=\int_{\mc{K}}\nu_b(E_{\infty,b})c_1(\ov{H}_1)_{\infty}^{e+1}=0,\]
contradiction. Thus there exists a point $b\in \mc{K}$ such that $\nu_b(E_{\infty,b})>0$.

\textbf{Step 4.} 
        Let $z\in E_{\infty}$, by the definition of limit set, there exists a sequence of points $z_n\in E_{n}$ such that $z_n\to z$ as $n\to \infty$. By definition, $\mr{Dist}(z_n;q-1)\leq \dfrac{2q^2}{2^{n/2}}$. Thus
\[\mr{Dist}(z;q-1)=\lim_{n\to\infty}\mr{Dist}(z_n;q-1)\leq \lim_{n\to\infty}\dfrac{2q^2}{2^{n/2}}=0.\]
Let $b\in \mc{K}$ be a point such that $\nu_b(E_{\infty,b})>0$. Then for $z\in E_{\infty}\cap \ms{A}(\mb{C})_b$, we have $\mr{dist}_b\left(z,\rho_b\big(\gamma_b^m\big)(z)\right)=0$ for some $1\leq m\leq q-1$. Consequently,  $\rho_b\big(\gamma_b^m\big)(z)=z$. 

Note that $\nu_b$ is the Haar measure on the fiber $\ms{A}(\mathbb{C})_b$. Since $\nu_b(E_{\infty,b})>0$, there exists an integer $m_0\in\{1,\dots,q-1\}$ such that the set of fixed point
\[\left\{z\in E_{\infty,b}: \rho_b\big(\gamma_b^{m_0}\big)(z)=z\right\}\]
has positive Haar measure on $\ms{A}(\mb{C})_b$. Since $\rho_b\big(\gamma_b^{m_0})\in \mr{GL}_{2g}(\mb{Z})$ is a linear action on $\ms{A}(\mb{C})_b$, it follows that $\rho_b\big(\gamma_b^{m_0}\big)(z)=z$ for all $z\in\ms{A}(\mathbb{C})_b$, Thus $\rho_b\big(\gamma_b^{m_0}\big)=\mr{id}\in \mr{GL}_{2g}(\mb{Z})$ and we obtain $\ord(s_i)=m_0\leq q-1$.
\end{proof}

\begin{corollary}\label{cor_ordbd}
Assume that $\ord(s_i) < \infty$. Let $\alpha > 0$ be a real constant and let $W_1' \subset W_1$ be a Borel subset with positive Lebesgue measure. Then, up to replacing $(x_n)_{n \geq 1}$ by a subsequence, there exists a Borel subset $W_1'' \subset W_1'$ with $\mu_{W}(W_1'') > 0$ such that
\[
        \frac{d_n}{r_n^{(i)}(v)} \geq \ord(s_i) - \alpha
\]
for all $v \in W_1''$ and all $n \geq 1$.    
\end{corollary}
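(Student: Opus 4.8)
The plan is to obtain this as a contrapositive of Proposition \ref{prop_finord}, followed by a Borel--Cantelli-type extraction of the subsequence. First I would set $q \colonequals \ord(s_i)$, a positive integer by the hypothesis $\ord(s_i) < \infty$, and for each $n \geq 1$ put
\[
W(n) \colonequals \left\{ v \in W_1' : \frac{d_n}{r_n^{(i)}(v)} < \ord(s_i) - \alpha \right\},
\]
which is a Borel subset of $W_1'$ (the branch loci $Z_n$ are Zariski closed, and on $W_1$ the function $r_n^{(i)}$ is well defined and Borel via its fibral description). Since a degree-$d_n$ covering of a circle has at most $d_n$ connected components, $r_n^{(i)}(v) \leq d_n$, hence $\frac{d_n}{r_n^{(i)}(v)} \geq 1$; so if $\ord(s_i) - \alpha \leq 1$ every $W(n)$ is empty and one takes $W_1'' \colonequals W_1'$ with no change of sequence. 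From now on assume $\ord(s_i) - \alpha > 1$, in particular $q \geq 2$.

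The first substantive step is to prove $\lim_{n\to\infty}\mu_{W}(W(n)) = 0$. If this failed we would have $\limsup_{n\to\infty}\mu_{W}(W(n)) > 0$; since $W_1' \subset W_1$, the sets $\{ v \in W_1 : d_n / r_n^{(i)}(v) < q - \alpha \}$ would then also have positive $\limsup$ of measures, so Proposition \ref{prop_finord} applied with this $q$ and this $\alpha$ would give $\ord(s_i) \leq q-1 = \ord(s_i)-1$, a contradiction. Hence $\mu_{W}(W(n)) \to 0$.

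The remaining step is bookkeeping. Using $\mu_{W}(W(n)) \to 0$, I would pass to a subsequence $(x_{n_k})_{k\geq 1}$ with $\mu_{W}(W(n_k)) < \frac{1}{2^{k+1}}\mu_{W}(W_1')$ for every $k$, and set $W_1'' \colonequals W_1' \setminus \bigcup_{k\geq 1} W(n_k)$. Then $W_1'' \subset W_1'$ is Borel with
\[
\mu_{W}(W_1'') \geq \mu_{W}(W_1') - \sum_{k\geq 1}\mu_{W}(W(n_k)) > \frac{1}{2}\,\mu_{W}(W_1') > 0,
\]
and every $v \in W_1''$ avoids all $W(n_k)$, i.e. $d_{n_k}/r_{n_k}^{(i)}(v) \geq \ord(s_i) - \alpha$ for all $k$; relabelling the subsequence as $(x_n)_{n\geq 1}$ yields the assertion.

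I do not expect a real obstacle: all of the analytic content, and in particular the invocation of the equidistribution theorem, is already concentrated in Proposition \ref{prop_finord}, so what is left is the elementary measure-theoretic argument above. The only point needing a little care is the Borel measurability of the sets $W(n)$, which is why I noted it parenthetically at the outset.
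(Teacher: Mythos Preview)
Your proposal is correct and follows essentially the same approach as the paper: both argue by contradiction from Proposition \ref{prop_finord} with $q=\ord(s_i)$ to get $\mu_W(W(n))\to 0$, then extract a subsequence with summable measures and set $W_1''\colonequals W_1'\setminus\bigcup_n W(n)$. The only cosmetic differences are that the paper defines its exceptional sets inside $W_1$ rather than $W_1'$ and uses $\tfrac{1}{2^n}\mu_W(W_1')$ rather than $\tfrac{1}{2^{k+1}}\mu_W(W_1')$; your handling of the trivial case ($\ord(s_i)-\alpha\leq 1$) is slightly cleaner.
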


\begin{proof}
    Since $\dfrac{d_n}{r_n^{(i)}(v)}\geq 1$ always holds, the case $\ord(s_i) = 1$ is trivial. We assume $\ord(s_i) \geq 2$.

    Consider the set
      \[
        W(n,i) \colonequals  \left\{ v \in W_1 : \frac{d_n}{r_n^{(i)}(v)} < \ord(s_i)-\alpha \right\}\subset W_1.
    \]
    If $ \limsup\limits_{n\to \infty}\mu_{W}\big(W(n,i)\big) >0$, then by Proposition \ref{prop_finord}, $\ord(s_i)\leq \ord(s_i)-1$, contradiction!
    Thus we have $ \lim\limits_{n\to \infty}\mu_{W}\big(W(n,i)\big)=0$.

    Up to replacing $(x_n)_{n\geq 1}$ by a subsequence, we may assume
    \[\mu_{W}\big(W(n,i)\big)<\frac{1}{2^n}\mu_{W}(W_1').\]
    Take $W_1''\colonequals W_1'\setminus \bigcup\limits_{n=1}^{\infty}W(n,i)$, then 
    \[\mu_{W}(W_1'')\geq \mu_{W}(W_1')-\sum_{n=1}^{\infty}\mu_{W}\big(W(n,i)\big)>0.\]
    By definition, for all $v \in W_1''$ and all $n \ge 1$, we have $\dfrac{d_n}{r_n^{(i)}(v)}\geq \ord(s_i)- \alpha$.
\end{proof}

\subsection*{Finish of the proof}
We first prove the following inequality, where we do not need additional conditions on the genus $g(C_n)$.
\begin{proposition}\label{prop_geqpre}
With the notations as above,
 \[\sum_{i=1}^N\frac{1}{\ord(s_i)}\leq 2g(B)-2+N.\]   
\end{proposition}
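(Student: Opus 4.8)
The plan is to read the inequality as saying that the orbifold Euler characteristic of $\overline{B}$, equipped with a cone point of order $\ord(s_i)$ at each $s_i$ with $\ord(s_i)<\infty$ and a puncture at each $s_i$ with $\ord(s_i)=\infty$, is $\le 0$, and to prove this by contradiction; the only input about $\mathcal{A}$ beyond its reduction type will be that $\mathcal{A}\to B$ is not isotrivial (the standing hypothesis of the proof of Theorem \ref{thm_main}, which is by contradiction). First, if $g(B)\ge 1$ then $2g(B)-2+N\ge N\ge\sum_i 1/\ord(s_i)$ since each summand is $\le 1$, so we may assume $\overline{B}=\mathbb{P}^1$. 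If some $\ord(s_i)=1$, then by Proposition \ref{prop_monotriv} $\mathcal{A}$ has good reduction at $s_i$ and so extends to an abelian scheme over $B\cup\{s_i\}$; this extension is still non-isotrivial (isotriviality is detected on the dense open $B$) and has the same orders at the remaining $s_j$ (the Betti monodromy of $\mathcal{A}$, being trivial on $\gamma_{s_i}$, descends), and passing to it lowers $N$ by $1$ while deleting a summand equal to $1$ on the left. Iterating, we may assume $\ord(s_i)\ge 2$ for all $i$. Then if $N\le 2$ the curve $B$ is $\mathbb{P}^1$, $\mathbb{A}^1$ or $\mathbb{G}_m$, so $\mathcal{A}\to B$ is isotrivial by Proposition \ref{prop_isotriv}, a contradiction; and if $N\ge 4$ then $\sum_i 1/\ord(s_i)\le N/2\le N-2=2g(B)-2+N$, and we are done. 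So it remains to treat $N=3$.

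Suppose $N=3$ and, for contradiction, that $\sum_{i=1}^{3}1/\ord(s_i)>1$. Then all three orders are finite (otherwise the sum is $\le 1/2+1/2+0$), and an elementary inspection shows that $(\ord(s_1),\ord(s_2),\ord(s_3))$ is, up to reordering, one of $(2,2,n)$ with $n\ge 2$, $(2,3,3)$, $(2,3,4)$, $(2,3,5)$. For each of these triples the corresponding von Dyck rotation group --- dihedral of order $2n$, $A_4$, $S_4$, $A_5$ respectively --- is finite, and the quotient of $\mathbb{P}^1(\mathbb{C})$ by it realizes a finite morphism $f\colon\mathbb{P}^1\to\mathbb{P}^1=\overline{B}$ branched exactly over $\{s_1,s_2,s_3\}$, with ramification index $\ord(s_i)$ at every point above $s_i$. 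Set $B'\colonequals f^{-1}(B)$, a connected Zariski-open subset of $\mathbb{P}^1$, and $\mathcal{A}'\colonequals\mathcal{A}\times_B B'$. For a point $s'\in\mathbb{P}^1$ above $s_i$, a small loop around $s'$ pushes forward under $f$ to $\gamma_{s_i}^{\ord(s_i)}$, so the Betti monodromy of $\mathcal{A}'$ at $s'$ is $\rho_b(\gamma_{s_i})^{\ord(s_i)}=\mathrm{id}$ by the definition of $\ord(s_i)$; hence by Proposition \ref{prop_monotriv} $\mathcal{A}'$ has good reduction at every point of $\mathbb{P}^1\setminus B'$, so it extends to an abelian scheme over $\mathbb{P}^1$, which is isotrivial by Proposition \ref{prop_isotriv}. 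Therefore $\mathcal{A}'\to B'$ is isotrivial, and since $f\colon B'\to B$ is finite and surjective, so is $\mathcal{A}\to B$ --- a contradiction. Thus $\sum_{i=1}^{3}1/\ord(s_i)\le 1=2g(B)-2+N$, which finishes the proof.

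The decisive and only non-formal step is the case $N=3$: producing the four families of admissible triples and then invoking the classical finiteness of the spherical von Dyck groups together with the existence of the branched cover $f\colon\mathbb{P}^1\to\mathbb{P}^1$ with exactly the prescribed ramification (equivalently, that the relevant spherical triangle orbifold is uniformized by $S^2$). The other ingredients --- the reduction to $\ord(s_i)\ge 2$, the interplay between $k$- and $\mathbb{C}$-coefficients for Propositions \ref{prop_monotriv} and \ref{prop_isotriv}, and the behaviour of $\rho_b$ under pullback along $f$ --- are routine but must be set up with care.
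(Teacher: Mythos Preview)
Your argument is correct but follows a genuinely different route from the paper's. The paper proves the inequality by a direct Riemann--Hurwitz count: assuming $\sum_i 1/\ord(s_i)>2g(B)-2+N+\varepsilon$, it chooses a torsion point $a\in A(\overline K)$ of degree $d>2/\varepsilon$, compactifies the fiber of the torsion multisection $\mathcal C_0=\overline{\{a\}}^{\mathrm{Zar}}$ over a point $v\in W$ to a curve $X$, and applies Riemann--Hurwitz to $p\colon X\to\overline{\mathcal B}(\mathbb C)_v$. Because $\mathcal C_0$ lies in Betti leaves, every local degree of $p$ above $s_{i,v}$ divides $\ord(s_i)$, so $\#p^{-1}(s_{i,v})\ge d/\ord(s_i)$; feeding this into Riemann--Hurwitz with $g(X)\ge 0$ yields the contradiction. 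No classification of orbifold types and no appeal to non-isotriviality enter the paper's argument for this proposition.

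Your approach instead interprets the inequality as non-positivity of the orbifold Euler characteristic of $(\overline B;\ord(s_i))$ and rules out the spherical case directly: after reducing to $g(B)=0$ and $\ord(s_i)\ge 2$, the only way the inequality can fail is $N\le 2$ (forcing $B\in\{\mathbb P^1,\mathbb A^1,\mathbb G_m\}$) or $N=3$ with a spherical triple $(2,2,n),(2,3,3),(2,3,4),(2,3,5)$, and in the latter case the finite von Dyck cover $f\colon\mathbb P^1\to\overline B$ kills all local monodromies, so the pullback extends over $\mathbb P^1$ and is isotrivial. This is precisely the base-change mechanism the paper deploys \emph{later}, in the final classification at the end of the proof of Theorem~\ref{thm_main1}, but applied to the spherical triples rather than the Euclidean ones $(3,3,3),(2,4,4),(2,3,6),(2,2,\infty)$ that arise there. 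What your route buys is that it avoids the quantitative torsion-multisection argument entirely; what it costs is the explicit reliance on non-isotriviality.

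One point to correct: you describe non-isotriviality as ``the standing hypothesis of the proof of Theorem~\ref{thm_main}, which is by contradiction.'' The paper does not in fact structure Section~\ref{Sec_proof} that way --- the propositions, including this one, are stated and proved without assuming non-isotriviality, and the paper's proof of Proposition~\ref{prop_geqpre} makes no use of it. Your use of the hypothesis is harmless in context (the target Theorem~\ref{thm_main} concludes isotriviality, so one loses nothing by assuming the contrary), but you should present it as an explicit reduction you are making, not as an assumption already in force.
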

\begin{proof}
Suppose that
    \[
        \sum_{i=1}^N \frac{1}{\ord(s_i)} > 2g(B) - 2 + N + \varepsilon
    \]
    for some $\varepsilon > 0$. Choose a torsion point $a \in A(\overline{K})$ such that its degree $d \colonequals  \deg(a)$ satisfies $d > 2/\varepsilon$.

    Let $\mathcal{C}_0\colonequals \Zar{\{a\}} \subset \ms{A}$ denote the torsion multi-section over $\mc{B}$ defined by $a$. The projection $\pi_0: \mathcal{C}_0 \to \mathcal{B}$ is finite and \'etale. Fix a point $v \in W$. Let $X$ be the compactification of the fiber $\mathcal{C}_0(\mathbb{C})_v$, and let $Y \colonequals  \overline{\mathcal{B}}(\mathbb{C})_v$. The projection induces a branched covering $p: X \to Y$ of degree $d$. Suppose $V_i\cap \ov{\mc{B}}(\mb{C})_v=\{s_{i,v}\}$, $1\leq i\leq N$. Then $s_{1,v},\dots,s_{N,v}\in Y$ are only branced points of $p$.  

    \textbf{Claim.} $\#p^{-1}(s_{i,v}) \geq \dfrac{d}{\ord(s_i)}$ for all $1\leq i\leq N$

    \noindent\textit{Proof of Claim:} The case $\ord(s_i) = \infty$ is trivial. Assume $\ord(s_i) < \infty$. Let $b\in (U_i)_v$ and $\gamma_{b}$ be a small loop around $s_{i,v}$. The set $p^{-1}(s_{i,v})$ is in one-to-one correspondence with the connected components of the preimage $\pi_0^{-1}(\gamma_{b})$, say $\{\widetilde{\gamma}_1, \dots, \widetilde{\gamma}_r\}$, where $r = \#p^{-1}(s_{i,v})$. Each component $\widetilde{\gamma}_j$ covers $\gamma_{b}$ with degree $q_j$, so $\sum\limits_{j=1}^r q_j = d$. Since $\mc{C}_0$ is a torsion multi-section, $\mathcal{C}_0(\mathbb{C})$ is contained in finitely many Betti leaves. Then each $\wt{\gamma}_j$ is contained in a Betti leaf.
   By definition of the local monodromy, for any $z \in \widetilde{\gamma}_j$ with $\pi_0(z)=b$, we have $\rho_b(\gamma_{b}^{q_j})(z) = z$. This implies that the $q_j$ must divide the local monodromy order $\ord(s_i)$. It follows that
    \[
        d = \sum_{j=1}^r q_j \leq r \cdot \ord(s_i) = \#p^{-1}(s_{i,v}) \cdot \ord(s_i),
    \]
    which proves the claim.\qed

Applying Riemann--Hurwitz formula to $p:X\to Y$, we obtain
\[\begin{aligned}
 2g(X) - 2 &= d(2g(Y) - 2) + \sum_{y \in Y} \sum_{z \in p^{-1}(y)} (e_z - 1) \\
                  &= d(2g(Y) - 2) + \sum_{i=1}^N \sum_{z \in p^{-1}(s_{i,v})} (e_z - 1)\\
            &=d(2g(B) - 2 + N) - \sum_{i=1}^N \# p^{-1}(s_{i,v})\\
            &\leq d(2g(B) - 2 + N)-\sum_{i=1}^N\frac{d}{\ord(s_i)}
\end{aligned}
\]
Hence
\[\sum_{i=1}^N\frac{1}{\ord(s_i)}\leq 2g(B)-2+N+\frac{2-2g(X)}{d}<2g(B)-2+N+\varepsilon<\sum_{i=1}^N\frac{1}{\ord(s_i)},\]
contradiction.    
\end{proof}

\ 

\noindent Now suppose $\ord(s_i)<\infty$ for $1\leq i\leq N_1$ for $N_1\leq N$ and $\ord(s_i)=\infty$ for $N_1+1\leq i\leq N$.   

\begin{lemma}\label{lem_leqpre}
With the notations as above, assume 
\[\sum_{i=1}^{N}\frac{1}{\ord(s_i)}<2g(B)-2+N\]
and $\liminf\limits_{n\to\infty}\dfrac{g(C_n)}{d_n}=0$. Then, up to replacing $(x_n)_{n \geq 1}$ by a subsequence, 
there exists a positive number $\lambda>0$ and a Borel subset $W_2\subset W_1$ with $\mu_{W}(W_2)>0$ such that
\[\sum\limits_{i=N_1+1}^Nr_n^{(i)}(v)\geq \dfrac{\lambda d_n}{2}\]
for all $v\in W_2$ and $n\geq 1$.
\end{lemma}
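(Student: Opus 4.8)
The plan is to run a Riemann--Hurwitz count on the branched covers cut out by the multisections $\mc{C}_n$ over a single fibre, to bound the contribution of the potentially-good-reduction places $s_1,\dots,s_{N_1}$ from above by Corollary~\ref{cor_ordbd}, and to use the hypothesis $\liminf_n g(C_n)/d_n=0$ to absorb the remaining genus term. First I would fix $v\in W_1$ and $n\geq 1$; since $v\in W_1$ the fibre $(Z_n)_v$ is finite, so restricting $\pi_n$ over $v$ gives a branched cover of degree $d_n$ of the punctured surface $\ov{\mc{B}}^o(\mb{C})_v\setminus\{s_{1,v},\dots,s_{N,v}\}$, and I let $p_n\colon X_n\to Y\colonequals\ov{\mc{B}}^o(\mb{C})_v$ be the associated branched cover of the smooth projective model, so that $\deg p_n=d_n$ and $g(Y)=g(B)$ ($Y$ is connected since $B$ is geometrically integral). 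After removing a Lebesgue-null set from $W_1$ (for instance, intersecting with the full-measure set of fully transcendental points of $V^o(\mb{C})$, over which the fibre of $\mc{C}_n\to V^o$ is the base change of the generic fibre $C_n$ for every $n$), I may assume $g(X_n)=g(C_n)$ for all $v\in W_1$ and all $n$; if $X_n$ or $C_n$ is disconnected I use the component-wise form of Lemma~\ref{lem_Riehur} noted in the remark after it.

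\textbf{The Riemann--Hurwitz lower bound.} For each $i$ the loops $\gamma_b$ with $b=(z,v)\in(U_i)_v$ are exactly the round circles of radius $|z|\in(0,3)$ about $s_{i,v}$ in the chosen coordinate, and the number $r_n(b)$ of connected components of $\pi_n^{-1}(\gamma_b)$ equals the number of cycles of the monodromy permutation of the cover around $\gamma_b$; this depends only on which points of $(Z_n)_v$ the circle encloses, so $r_n(b)$ is constant on each interval of radii lying between consecutive ``bad'' radii, and the minimum $r_n^{(i)}(v)$ is attained on such an interval. I choose a radius $\rho_i$ in the interior of a minimizing interval which moreover avoids the finitely many radii of images of singular points of $\mc{C}_n(\mb{C})_v$, and set $D_i\colonequals D(s_{i,v},\rho_i)$; then the $D_i$ are pairwise disjoint (they lie in the disjoint $U_i$), the boundaries $\partial D_i$ avoid the branch locus of $p_n$ (contained in $(Z_n)_v\cup\{s_{1,v},\dots,s_{N,v}\}$), and $p_n^{-1}(\partial D_i)$ has exactly $r_n^{(i)}(v)$ connected components. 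Lemma~\ref{lem_Riehur} applied to $p_n\colon X_n\to Y$ with the disks $D_1,\dots,D_N$ then gives, for all $v\in W_1$ and all $n\geq1$,
\[\sum_{i=1}^N r_n^{(i)}(v)\;\geq\; d_n\big(2g(B)-2+N\big)-\big(2g(X_n)-2\big)\;\geq\; d_n\big(2g(B)-2+N\big)-2g(C_n)+2 .\]

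\textbf{Conclusion.} Set $c_0\colonequals 2g(B)-2+N-\sum_{i=1}^{N}\tfrac1{\ord(s_i)}>0$ (the hypothesis, with $\ord(s_i)=\infty$ contributing $0$) and fix $\alpha_i\in(0,\tfrac12)$, $1\leq i\leq N_1$, small enough that $\lambda\colonequals 2g(B)-2+N-\sum_{i=1}^{N_1}\tfrac1{\ord(s_i)-\alpha_i}>\tfrac{c_0}{2}>0$. Using $\liminf_n g(C_n)/d_n=0$ I first pass to a subsequence with $g(C_n)<\tfrac{\lambda}{4}d_n$ for every $n$; then, applying Corollary~\ref{cor_ordbd} once for each $i=1,\dots,N_1$ (each step replacing $(x_n)$ by a further subsequence and $W_1$ by a positive-measure Borel subset, which preserves all previous estimates), I obtain a subsequence and a Borel set $W_2\subset W_1$ with $\mu_W(W_2)>0$ on which $r_n^{(i)}(v)\leq \tfrac{d_n}{\ord(s_i)-\alpha_i}$ for all $1\leq i\leq N_1$ and all $n\geq1$. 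Subtracting these from the displayed inequality gives, for all $v\in W_2$ and $n\geq1$,
\[\sum_{i=N_1+1}^N r_n^{(i)}(v)\;\geq\;\lambda d_n-2g(C_n)+2\;>\;\lambda d_n-\tfrac{\lambda}{2}d_n\;=\;\tfrac{\lambda d_n}{2},\]
which is the assertion (with this $\lambda$). If $N_1=N$ the left side vanishes while the right side is positive, so in that case the hypotheses are contradictory and the lemma holds vacuously.

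\textbf{Main obstacle.} The delicate point is the Riemann--Hurwitz step: Lemma~\ref{lem_Riehur} only becomes useful once the disks are chosen so that their boundary preimages realize the \emph{minimal} component counts $r_n^{(i)}(v)$, which is what converts the estimate into a genuine lower bound for $\sum_i r_n^{(i)}(v)$; this rests on $r_n(b)$ being locally constant in the radius and realized at a radius missing both the branch locus below and the images of the singularities above. Everything else---the identification $g(X_n)=g(C_n)$ on a full-measure subset, and the combinatorics of making finitely many subsequence choices and null-set removals compatible so that every pointwise estimate holds simultaneously on one positive-measure set for all $n$---is routine; the one conceptual ingredient there is that $\liminf_n g(C_n)/d_n=0$ is exactly what absorbs the term $2g(C_n)$.
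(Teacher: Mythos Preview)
Your proposal is correct and follows essentially the same route as the paper: apply Lemma~\ref{lem_Riehur} to the compactified normalized fibre of $\mc{C}_n$ over $v$ with disks chosen so that $\partial D_i$ realizes the minimal count $r_n^{(i)}(v)$, then use Corollary~\ref{cor_ordbd} on the finite-order places and the hypothesis $\liminf_n g(C_n)/d_n=0$ to absorb the genus term. Your treatment is in fact slightly more careful than the paper's on two points the paper leaves implicit---the identification $g(X_n)=g(C_n)$ over a Zariski-open (hence conull) set of $v$, and the justification that a minimizing radius avoiding the branch locus exists---and your closing remark on the vacuous case $N_1=N$ is a valid observation.
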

\begin{proof}
We choose small constants $\varepsilon, \lambda > 0$ such that
\[\sum_{i=1}^{N_1}\frac{1}{\ord(s_i)-\varepsilon}<2g(B)-2+N-\lambda.\]
 By Corollary \ref{cor_ordbd}, up to replacing $(x_n)_{n \geq 1}$ by a subsequence, there exists a Borel subset $W_2 \subset W_1$ with $\mu_{W}(W_2) > 0$ such that
    \[
        \frac{d_n}{r_n^{(i)}(v)} \geq \ord(s_i) - \varepsilon
    \]
    for all $v \in W_2$, $n \geq 1$, and $1 \leq i \leq N_1$.
    Additionally, we may assume that $\dfrac{2g(C_n) - 2}{d_n} < \dfrac{\lambda}{2}$ for all $n \geq 1$. 

Fix $v\in W_2$. Let $\wt{X}$ denote the compactification of the normalization of $X\colonequals \mc{C}_n(\mb{C})_v$. The morphism  $(\pi_n)_v:\mc{C}_n(\mb{C})_v\to \mc{B}(\mb{C})_v$ induces a morphism $p: \widetilde{X} \to Y \colonequals  \overline{\mathcal{B}}(\mathbb{C})_v$.

For each $1 \leq i \leq N$, choose a point $b_i \in (U_i)_v$ such that the loop $\gamma_{b_i}$ avoids the branch locus and the preimage $\pi_n^{-1}(\gamma_{b_i})$ consists of $r_n^{(i)}(v)$ connected components. Consequently, $p^{-1}(\gamma_{b_i})$ also has $r_n^{(i)}(v)$ connected components. Applying Lemma \ref{lem_Riehur} to $p$, we obtain 
 \[\sum_{i=1}^{N}r_n^{(i)}(v)\geq d_n(2g(B)-2+N)-(2g(C_n)-2).\]
 (It should be noticed that $\wt{X}$ might be disconnected. So we have to apply Lemma \ref{lem_Riehur} on each connected component of $\wt{X}$ and add up all inequalities.) 

It follows that
\[
\begin{aligned}
2g(B)-2+N-\lambda> &\sum_{i=1}^{N_1}\frac{1}{\ord(s_i)-\varepsilon}\geq \sum_{i=1}^{N_1}\frac{r_n^{(i)}(v)}{d_n}\\
\geq &2g(B)-2+N-\frac{2g(C_n)-2}{d_n}-\sum_{i=N_1+1}^N\frac{r_n^{(i)}(v)}{d_n}.
\end{aligned}
\]
Thus 
\[\sum\limits_{i=N_1+1}^Nr_n^{(i)}(v)\geq \lambda d_n-(2g(C_n)-2)>\frac{\lambda d_n}{2}\]
This completes the proof.
\end{proof}

\begin{proposition}\label{prop_ordeq}
With the notations as above, if the genus sequence $\big(g(C_n)\big)_{n\geq 1}$ satisfies $\liminf\limits_{n\to\infty}\dfrac{g(C_n)}{d_n}=0$, then
\[\sum_{i=1}^N\frac{1}{\ord(s_i)}=2g(B)-2+N.\]
\end{proposition}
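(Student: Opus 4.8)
The plan is to derive Proposition \ref{prop_ordeq} by combining Proposition \ref{prop_geqpre}, which already gives $\sum_{i=1}^N \frac{1}{\ord(s_i)} \le 2g(B)-2+N$ with no hypothesis on the genus, with a contradiction argument for the reverse inequality. So I would suppose, toward a contradiction, that
\[\sum_{i=1}^N \frac{1}{\ord(s_i)} < 2g(B)-2+N,\]
retain the standing hypothesis $\liminf_{n\to\infty} g(C_n)/d_n = 0$, and show this is impossible; equality then follows at once.

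The first step is to feed these two assumptions into Lemma \ref{lem_leqpre}. After passing to a subsequence, it produces a constant $\lambda>0$ and a Borel set $W_2\subset W_1$ with $\mu_W(W_2)>0$ such that $\sum_{i=N_1+1}^N r_n^{(i)}(v)\ge \lambda d_n/2$ for all $v\in W_2$ and all $n\ge 1$, where $s_1,\dots,s_{N_1}$ are the points of potentially good reduction (so $\ord(s_i)=\infty$ for $i>N_1$). If $N_1=N$ the left-hand side is an empty sum, and $0\ge\lambda d_n/2>0$ is absurd for any $v\in W_2$; this disposes of the case where all reductions are potentially good, so one may assume $N_1<N$. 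In that case the task is to show the bound $\sum_{i=N_1+1}^N r_n^{(i)}(v)\ge\lambda d_n/2$ forces some index $i^*$ with $N_1+1\le i^*\le N$ to satisfy $\ord(s_{i^*})<\infty$, contradicting $\ord(s_{i^*})=\infty$.

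The core of the argument is then a two-stage pigeonhole feeding into Proposition \ref{prop_finord}. Put $c\colonequals \lambda/\bigl(2(N-N_1)\bigr)>0$. For each $v\in W_2$ and each $n$, not all $N-N_1$ terms $r_n^{(i)}(v)$ can be smaller than $c\,d_n$, so the sets $W_2^{(i)}(n)\colonequals \{v\in W_2: r_n^{(i)}(v)\ge c\,d_n\}$, for $i=N_1+1,\dots,N$, cover $W_2$; hence for each $n$ some index $i_n$ has $\mu_W\bigl(W_2^{(i_n)}(n)\bigr)\ge \mu_W(W_2)/(N-N_1)$, and as the index set is finite a single index $i^*$ serves as $i_n$ for infinitely many $n$. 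Now fix an integer $q$ with $q-1>1/c$ and take $\alpha=1$: the definition of $r_n^{(i^*)}(v)$ gives $W_2^{(i^*)}(n)\subset W(n,i^*)$ in the notation of Proposition \ref{prop_finord}, because $d_n/r_n^{(i^*)}(v)\le 1/c<q-\alpha$ there. Therefore $\limsup_{n\to\infty}\mu_W\bigl(W(n,i^*)\bigr)>0$, and Proposition \ref{prop_finord} yields $\ord(s_{i^*})\le q-1<\infty$, contradicting $i^*\ge N_1+1$. This completes the contradiction, and together with Proposition \ref{prop_geqpre} proves the equality.

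I expect the substantive mathematical content to lie entirely in the ingredients already in hand — the equidistribution-driven Proposition \ref{prop_finord} and the Riemann--Hurwitz estimate behind Lemma \ref{lem_leqpre} — while the proof of Proposition \ref{prop_ordeq} itself is a combinatorial packaging. The one point that needs care is making sure the pigeonhole is arranged so that Proposition \ref{prop_finord}'s hypothesis is met in its stated $\limsup$ form: since the counting produces a set of uniformly positive measure for infinitely many $n$, no further passage to a subsequence is needed, and the bookkeeping of subsequences inherited from Lemma \ref{lem_leqpre} and Proposition \ref{prop_goodset} stays consistent.
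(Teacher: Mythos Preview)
Your proposal is correct and follows essentially the same route as the paper: invoke Proposition \ref{prop_geqpre} for one inequality, assume the strict reverse, apply Lemma \ref{lem_leqpre} to obtain the bound $\sum_{i>N_1} r_n^{(i)}(v)\ge \lambda d_n/2$ on a set of positive measure, and then pigeonhole among the indices $i>N_1$ to feed Proposition \ref{prop_finord} with $q-1>2(N-N_1)/\lambda$ and $\alpha=1$. The only cosmetic difference is that the paper pigeonholes directly on the sets $W(n,i)$ (after noting $W_2\subset\bigcup_{i>N_1} W(n,i)$) and passes to a subsequence, whereas you introduce the intermediate sets $W_2^{(i)}(n)$ and use the $\limsup$ formulation; the content is identical.
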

\begin{proof}
According to Proposition \ref{prop_geqpre}, it suffices to prove
\[\sum_{i=1}^N\frac{1}{\ord(s_i)}\geq 2g(B)-2+N.\]
Suppose the inequality does not hold. Then, by Lemma \ref{lem_leqpre}, there exists a positive number $\lambda>0$ and a Borel subset $W_2\subset W_1$ with $\mu_{W}(W_2)>0$ such that
\[\sum\limits_{i=N_1+1}^Nr_n^{(i)}(v)\geq \dfrac{\lambda d_n}{2}\]
for all $v\in W_2$ and $n\geq 1$. Note that this forces $N_1 < N$.

Choose an integer $q$ sufficiently large such that $q > \dfrac{2(N - N_1)}{\lambda} + 1$. Then for all $v\in W_2$ and $n\geq 1$,
\[\min_{N_1+1 \le i \le N} \frac{d_n}{r_n^{(i)}(v)} \le \frac{2(N-N_1)}{\lambda} < q-1\]
Define the sets 
\[
 W(n,i) \colonequals  \left\{ v \in W_1 : \frac{d_n}{r_n^{(i)}(v)} < q-1 \right\}\subset W_1,
\]
we have $W_2 \subset\bigcup\limits_{i=N_1+1}^N W(n,i)$ for all $n\geq 1$.  By pigeonhole principle, for any $n$, there exists an index $i(n) \in \{N_1+1, \dots, N\}$ such that
    \[\mu_{W}\bigg(W\big(n,i(n)\big)\bigg)\geq\frac{1}{N-N_1}\mu_{W}(W_2).\]
Replacing $(x_n)_{n \geq 1}$ by a subsequence, we may assume $i(n) = i_0$ for some $i_0\in \{N_1+1,\dots,N\}$ and all $n\geq 1$.

We now apply Proposition \ref{prop_finord} to the index $i_0$. The condition  $\mu_{W}\big(W(n,i_0)\big)\geq\dfrac{1}{N-N_1}\mu_{W}(W_2)$ implies that $\ord(s_{i_0}) < \infty$. This contradicts the assumption that $i_0 > N_1$. Therefore, the initial inequality must hold.
\end{proof}

\medskip

\noindent\textit{Proof of Theorem \ref{thm_main1}:} By Proposition \ref{prop_monotriv}, for $1\leq i\leq N$, if $\ord(s_i)=1$, then the abelian scheme $\pi:\mc{A}\to B$ has good reduction at $s_i$. So we may assume $\ord(s_i)\geq 2$ for all $1\leq i\leq N$. By Propostion \ref{prop_ordeq}, we have 
\[2g(B)-2+N=\sum_{i=1}^N\frac{1}{\ord(s_i)}\leq \frac{N}{2}.\]
Thus $N\leq 4-4g(B)\leq 4$. Moreover, we must have $g(B)\leq 1$.

\begin{enumerate}
\renewcommand{\labelenumi}{(\theenumi)}
\item If $g(B)=1, N=0$ or $g(B)=0, N\leq 2$, then $B= \mb{A}^1$, $\mb{G}_m$, $\mb{P}^1$ or an elliptic curve. By Propostion \ref{prop_isotriv}, the abelian scheme $\pi:\mc{A}\to B$ is isotrivial.

\item  If $g(B)=0$ and $N=4$, we have 
    \[\sum_{i=1}^4\frac{1}{\ord(s_i)}=2.\]
The reduction type must be $(2,2,2,2)$. Up to a change of coordinates, we set $(b_1,b_2,b_3,b_4)=(0,1,\lambda,\infty)$ in $\ov{B}(k)=\mb{P}^1(k)$. Consider the Legendre curve $E$ defined by $y^2=x(x-1)(x-\lambda)$ and the branched covering 
    \[p:E\to \mb{P}^1,\quad (x,y)\mapsto x.\]
    The map $p$ is branched at $b_1,b_2,b_3,b_4$ with ramification index $2$. Let $S\colonequals p^{-1}\{b_1,b_2,b_3,b_4\}$ and  $E^o\colonequals E\setminus S$. The base change  $\mc{A}\times_B E^o$ is an abelian scheme over $E^o$ with reduction type $(1,1,1,1)$. In other words, $\mc{A}\times_B E^o$ has good reduction at all points of $S$. Thus, it extends to an abelian scheme over $E$, and this implies $\pi:\mc{A}\to B$ is isotrivial.
\end{enumerate}

 If $g(B)=0$ and $N=3$, we have 
    \[\sum_{i=1}^3\frac{1}{\ord(s_i)}=1.\]
The reduction type must be $(3,3,3), (2,4,4), (2,3,6), (2,2,\infty)$.

\begin{enumerate}
\renewcommand{\labelenumi}{(\theenumi)}
\setcounter{enumi}{2}
    \item  \textbf{Reduction type $(3,3,3)$.} Assume $(b_1,b_2,b_3)=(0,1,\infty)$ in $\mb{P}^1(k)$. Consider the elliptic curve $E: y^3=x(x-1)$ and the morphism 
    \[p:E\to \mb{P}^1,\quad (x,y)\mapsto x.\]
    Then $p$ is branched at $b_1,b_2,b_3$ with ramification index $3$. Arguing as (2), the abelian scheme $\pi:\mc{A}\to B$ is isotrivial.
\end{enumerate}

Consider the morphism  
\[p:\mb{P}^1\to \mb{P}^1,\quad z\mapsto z^2.\]
 Then $p$ is branched at $0,\infty$ with ramification index $2$. Let $S\colonequals p^{-1}\{0,1,\infty\}=\{0,1,-1,\infty\}\subset \mb{P}^1$. We take  $B^{\prime}\colonequals \mb{P}^1\setminus S$.

\begin{enumerate}
\renewcommand{\labelenumi}{(\theenumi)}
\setcounter{enumi}{3}
    \item \textbf{Reduction type $(2,4,4)$.} Assume $(b_1,b_2,b_3)=(1,0,\infty)$ in $\mb{P}^1(k)$. Take base change, $\mc{A}\times_B B^{\prime}$ has reduction type $(2,2,2,2)$. This reduces to case (2).

\item \textbf{Reduction type $(2,3,6)$.} Assume $(b_1,b_2,b_3)=(0,1,\infty)$ in $\mb{P}^1(k)$. Take base change, $\mc{A}\times_B B^{\prime}$ has reduction type $(3,3,3)$. This reduces to case (3).

\item \textbf{Reduction type $(2,2,\infty)$.} Assume $(b_1,b_2,b_3)=(0,\infty,1)$ in $\mb{P}^1(k)$. Take base change, $\mc{A}\times_B B^{\prime}$ has reduction type $(1,1,\infty,\infty)$. This reduces to the case $N=2$.
\end{enumerate}
In conclusion, the abelian scheme $\pi:\mc{A}\to B$ is isotrivial. \qed


\section{Applications to the uniform boundedness conjecture}\label{Sec_app}
Throughout this section, let $k$ be a finitely generated field over $\mathbb{Q}$, and let $B$ be a smooth, geometrically integral curve over $k$. Let $\pi: \mathcal{A} \to B$ be an abelian scheme of relative dimension $g$. We denote by $K \colonequals  k(B)$ the function field of $B$ and by $A \colonequals  \mathcal{A}_K$ the generic fiber of $\pi$. 

Given an ample and symmetric line bundle $L \in \operatorname{Pic}(A)$ and a big and nef adelic line bundle $\overline{H}$ on $B$, we may define the Néron--Tate height
\[
\widehat{h}_L^{\overline{H}} : A(\overline{K}) \longrightarrow \mathbb{R}
\]
as defined in Section \ref{Sec_ht}.

For any point $x \in A(\overline{K})$, we denote by $C_x \colonequals  \Zar{\{x\}}$ the Zariski closure of $x$ in $\mathcal{A}$, which is a curve finite over $B$.

We establish the following Bogomolov-type result:

\begin{theorem}\label{thm_bogotype}
With the notations as above, assume the abelian scheme $\mathcal{A} \to B$ contains no non-trivial isotrivial abelian subscheme. Then for any integer $N > 0$, there exists a constant $\varepsilon = \varepsilon(N) > 0$ such that the set
\[
\left\{ x \in A(\overline{K}) : g(C_x) \leq N,\quad\widehat{h}_L^{\overline{H}}(x) \leq \varepsilon \right\}
\]
is finite.
\end{theorem}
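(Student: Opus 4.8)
The plan is to argue by induction on the relative dimension $g$ of $\mathcal{A}\to B$; the case $g=0$ is vacuous, since then $A(\overline K)$ is a single point. So fix $g\ge1$, assume the theorem holds for every abelian scheme of smaller relative dimension having no non-trivial isotrivial abelian subscheme, and suppose for contradiction that the conclusion fails for $\mathcal{A}\to B$ and some $N$. Then for every $\varepsilon>0$ the set $\{x:g(C_x)\le N,\ \widehat h_L^{\overline H}(x)\le\varepsilon\}$ is infinite, so I may choose pairwise distinct points $x_n\in A(\overline K)$ with $g(C_{x_n})\le N$ and $\widehat h_L^{\overline H}(x_n)\to0$. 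By the Northcott property (Proposition~\ref{prop_ht}(1)), $\deg(x_n)\to\infty$, hence $g(C_{x_n})/\deg(x_n)\to0$. Among the irreducible closed subvarieties of $A$ containing infinitely many $x_n$ I would pick one, $Y$, of minimal dimension; necessarily $\dim Y\ge1$, otherwise infinitely many $x_n$ coincide. Replacing $(x_n)$ by the subsequence of those lying in $Y$, minimality forces $(x_n)$ to be generic in $Y$: if an infinite subsequence had Zariski closure strictly inside $Y$, some component of that closure would be an irreducible closed subvariety of strictly smaller dimension still containing infinitely many $x_n$.

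If $Y=A$, then $(x_n)$ is a generic and small sequence in $A(\overline K)$ with $\liminf_n g(C_{x_n})/\deg(x_n)=0$, so Theorem~\ref{thm_main1} forces $\mathcal{A}\to B$ to be isotrivial; but as $g\ge1$ this makes $\mathcal{A}\to B$ a non-trivial isotrivial abelian subscheme of itself, contradicting the hypothesis. This already settles the base case $g=1$, since every proper closed subvariety of a curve is finite, so there $Y=A$ is forced.

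Now suppose $Y\subsetneq A$. By Proposition~\ref{prop_htcomp} the geometric heights satisfy $\widehat h^{H}_{L,\mathrm{geom}}(x_n)\to0$, so the essential minimum of $\widehat h^{H}_{L,\mathrm{geom}}$ on $Y$ is $0$. Since $\mathcal{A}\to B$ has no non-trivial isotrivial abelian subscheme, the abelian variety $A$ has trivial $\overline K/\overline k$-trace, and the geometric Bogomolov conjecture over the function field $K$ — valid in the present setting, and in any case deducible from the equidistribution Theorem~\ref{thm_equidis} by the classical argument of Ullmo and Zhang — then shows that $Y$ is a torsion coset: $Y=t+A'$ for a torsion point $t\in A(\overline K)$ and a nonzero abelian subvariety $A'\subsetneq A$. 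I would extend $A'$ to an abelian subscheme $\mathcal{A}'\subseteq\mathcal{A}$ over $B$ (after shrinking $B$ to a dense open, which changes no genus $g(C_x)$, as $g(C_x)$ depends only on the smooth projective model of the residue field of $x$); then $1\le\dim\mathcal{A}'<g$, and since an isotrivial abelian subscheme of $\mathcal{A}'$ would yield one of $\mathcal{A}$, the scheme $\mathcal{A}'\to B$ has no non-trivial isotrivial abelian subscheme. Set $y_n:=x_n-t\in A'(\overline K)$; these are pairwise distinct and Zariski dense in $A'$, and because $t$ is torsion we have $\widehat h_L^{\overline H}(t)=0$, so Cauchy–Schwarz for the positive semidefinite Néron–Tate quadratic form gives $\widehat h^{\overline H}_{L|_{A'}}(y_n)=\widehat h_L^{\overline H}(y_n)=\widehat h_L^{\overline H}(x_n)\to0$, where $L|_{A'}$ is ample and symmetric.

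It remains to bound $g(C_{y_n})$ uniformly in $n$, and here the torsion of $t$ is essential. Put $d_0:=\deg(t)$ and $\Gamma_n:=\Zar{\{(x_n,t)\}}\subseteq C_{x_n}\times_B C_t$. Since $t$ is torsion, $C_t$ is a connected component of $\mathcal{A}[m]$ for suitable $m$, so $C_t\to B$ is finite étale of degree $d_0$; hence the base change $C_{x_n}\times_B C_t\to C_{x_n}$ is finite étale, and after normalizing, the smooth model of $\Gamma_n$ is finite étale over that of $C_{x_n}$, of degree $\le d_0$. By Riemann–Hurwitz its genus is at most $d_0\,g(C_{x_n})+d_0\le d_0(N+1)=:N'$, independent of $n$. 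The difference morphism $C_{x_n}\times_B C_t\to\mathcal{A}$, $(p,q)\mapsto p-q$, carries $\Gamma_n$ dominantly onto $C_{y_n}$, so $g(C_{y_n})\le N'$. Applying the induction hypothesis to $\mathcal{A}'\to B$ with the integer $N'$ yields $\varepsilon'>0$ for which $\{y\in A'(\overline K):g(C_y)\le N',\ \widehat h^{\overline H}_{L|_{A'}}(y)\le\varepsilon'\}$ is finite — yet it contains $y_n$ for all large $n$, the required contradiction, completing the induction. I expect the main obstacle to lie in the non-generic case: it requires the Bogomolov-type input that a Zariski-dense sequence of small points on a trivial-trace abelian variety over a function field must lie on a torsion coset, together with the observation that translating by a torsion point multiplies the genus of the associated multisection only by a bounded factor — because torsion multisections are unramified over $B$ — which is what makes the descent to the strictly smaller abelian scheme $\mathcal{A}'$ possible.
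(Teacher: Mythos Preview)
Your argument is correct, but the paper's proof is shorter and avoids both the induction and the fiber-product genus computation. The key simplification you missed is this: once Bogomolov gives $Y=A_1+a$ with $a$ torsion of order $m$, the paper sets $y_n:=m\cdot x_n$ rather than $y_n:=x_n-t$. Then $y_n\in A_1$, and the multiplication-by-$m$ morphism $[m]\colon\mathcal{A}\to\mathcal{A}$ sends $C_{x_n}$ onto $C_{y_n}$, so $g(C_{y_n})\le g(C_{x_n})\le N$ immediately --- no \'etale base change, no Riemann--Hurwitz. Moreover, the paper does not induct: having produced a generic small sequence $(y_n)$ in $A_1(\overline K)$ with $g(C_{y_n})\le N$, it applies Theorem~\ref{thm_main1} directly to the abelian subscheme $\mathcal{A}_1$ (the closure of $A_1$) to conclude $\mathcal{A}_1\to B$ is isotrivial, contradicting the hypothesis. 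Your induction step does exactly this work, so it is redundant; you could equally well have invoked Theorem~\ref{thm_main1} on $\mathcal{A}'$ once you had the $y_n$. Finally, the paper cites Moriwaki's Bogomolov conjecture over finitely generated fields \cite[Theorem~8.1]{Mor00} for the arithmetic height $\widehat h_L^{\overline H}$ directly, rather than passing to geometric heights via Proposition~\ref{prop_htcomp}; your route works too, but is a detour. In short: multiply by the order of the torsion point instead of translating, and apply the main theorem to the subscheme instead of inducting.
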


\begin{proof}
Assume that there exists a sequence of distinct points $(x_n)_{n \geq 1}$ in $A(\overline{K})$ such that $g(C_{x_n}) \leq N$ for all $n \geq 1$ and $\lim\limits_{n \to \infty} \widehat{h}_L^{\overline{H}}(x_n) = 0$. Let $V$ be the Zariski closure of the set $\{x_n : n \geq 1\}$ in $A$. By passing to a subsequence, we may assume $V$ is irreducible with $\dim V \geq 1$.

By the Bogomolov conjecture over finitely generated fields, proved by Moriwaki \cite[Theorem 8.1]{Mor00}, there exists an abelian subvariety $A_1 \subset A$ and a torsion point $a \in A(\overline{K})$ such that $V$ is a translate $V = A_1 + a$. Let $m$ be a positive integer such that $m \cdot a = 0$, and define $y_n \colonequals  m \cdot x_n$. It follows that
\[
y_n \in m \cdot V = m \cdot A_1 + m \cdot a = A_1.
\]
Since $(x_n)_{n \geq 1}$ is Zariski dense in $V$, the sequence $(y_n)_{n \geq 1}$ is Zariski dense in $A_1$. Up to taking a subsequence, we may assume that $(y_n)$ is a generic sequence in $A_1$. 

The multiplication-by-$m$ map $[m]: \mathcal{A} \to \mathcal{A}$ induces a surjective morphism $C_{x_n} \to C_{y_n}$. Consequently, we have $g(C_{y_n}) \leq g(C_{x_n}) \leq N$. Furthermore, by the properties of the Néron--Tate height,
\[
\lim_{n \to \infty} \widehat{h}_L^{\overline{H}}(y_n) = m^2 \lim_{n \to \infty} \widehat{h}_L^{\overline{H}}(x_n) = 0.
\]
Let $\mathcal{A}_1$ be the Zariski closure of $A_1$ in $\mathcal{A}$, which is an abelian subscheme of $\mathcal{A}$. By Theorem \ref{thm_main1}, the abelian subscheme $\mathcal{A}_1 \to B$ is isotrivial. This contradicts our assumption that $\mathcal{A}$ contains no non-trivial isotrivial abelian subscheme.
\end{proof}

For a positive integer $n$, let $A[n]^{\times}$ denote the set of torsion points in $A(\overline{K})$ of exact order $n$. We obtain the following corollary, which answers a conjecture proposed in \cite{CT11}.

\begin{corollary}
Assume that $\mathcal{A}$ contains no non-trivial isotrivial abelian subscheme. Let $g(n) \colonequals  \min\limits_{x \in A[n]^{\times}} g(C_{x})$. Then 
\[ \lim_{n \to \infty} g(n) = +\infty. \]
\end{corollary}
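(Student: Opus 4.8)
The plan is to deduce this directly from the Bogomolov-type Theorem~\ref{thm_bogotype} via a contradiction argument. Suppose the conclusion fails. Then there is an integer $N$ and an infinite set of positive integers $n_1 < n_2 < \cdots$ with $g(n_j) \le N$ for all $j$. For each $j$, I would choose a torsion point $x_j \in A[n_j]^{\times}$ realizing the minimum, so that $g(C_{x_j}) = g(n_j) \le N$. Since the $n_j$ are pairwise distinct, the points $x_j$ have pairwise distinct exact orders, hence are pairwise distinct; in particular $(x_j)_{j \ge 1}$ is an infinite sequence of distinct points of $A(\ov K)$.

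Next I would invoke Proposition~\ref{prop_ht}(3): every torsion point has vanishing N\'eron--Tate height, so $\widehat{h}_L^{\ov H}(x_j) = 0$ for all $j$, and in particular $\widehat{h}_L^{\ov H}(x_j) \le \varepsilon(N)$, where $\varepsilon(N) > 0$ is the constant furnished by Theorem~\ref{thm_bogotype} for the genus bound $N$ (using the hypothesis that $\mathcal{A} \to B$ contains no non-trivial isotrivial abelian subscheme). Therefore the infinitely many distinct points $x_j$ all lie in the set
\[
\left\{ x \in A(\ov K) : g(C_x) \le N,\ \widehat{h}_L^{\ov H}(x) \le \varepsilon(N) \right\},
\]
which Theorem~\ref{thm_bogotype} asserts is finite. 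This contradiction shows $\lim_{n \to \infty} g(n) = +\infty$.

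I do not expect any genuine obstacle here, since Theorem~\ref{thm_bogotype} does all the substantive work; the proof is essentially a one-line reduction. The only points requiring minor care are that $A[n]^{\times} \neq \varnothing$ for every $n \ge 1$ — which holds because $A[n] \cong (\mathbb{Z}/n\mathbb{Z})^{2g}$ over $\ov K$ — so that $g(n)$ is well-defined, and that the distinctness of the minimizers $x_j$ follows from the distinctness of their orders (so that no separate extraction of a subsequence of distinct points is needed).
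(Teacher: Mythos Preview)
Your proposal is correct and follows essentially the same approach as the paper's own proof: both argue by contradiction, produce an infinite sequence of distinct torsion points with bounded genus, observe that their N\'eron--Tate heights vanish, and appeal to Theorem~\ref{thm_bogotype} to obtain a contradiction. Your write-up is in fact slightly more detailed than the paper's (e.g.\ explicitly justifying distinctness via distinct exact orders and noting that $A[n]^{\times}$ is nonempty), but the argument is the same.
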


\begin{proof}
Assume that there exists an integer $N > 0$ and a sequence of distinct torsion points $x_n \in A(\ov{K})$, such that $g(C_{x_n}) \leq N$ for all $n \geq 1$. Since $x_n$ are torsion points, their Néron--Tate heights satisfy $\widehat{h}_L^{\overline{H}}(x_n) = 0$ for all $n \geq 1$. This contradicts the finiteness result in Theorem \ref{thm_bogotype}.
\end{proof}

The following theorem can be regarded as a generalization of Theorem \ref{thm_CT}.

\begin{theorem}[=Theorem \ref{thm_sec1}]\label{thm_sec}
Assume the abelian scheme $\mathcal{A} \to B$ contains no non-trivial isotrivial abelian subscheme. Let $P: B \to \mathcal{A}$ be a section. Then for any finite extension $k'/k$ and any prime $\ell$, there exists an integer $N \colonequals  N(\mathcal{A}, P,k, k', \ell)$ such that for any $b \in B(k')$, 
\[ \#\left\{z \in \mathcal{A}_b(k') : \ell^n z = P_b \text{ for some } n \geq 0 \right\} \leq N. \]
\end{theorem}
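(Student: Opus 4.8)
The plan is to reduce the assertion to the uniform Bogomolov-type Theorem~\ref{thm_bogotype} (hence ultimately to Theorem~\ref{thm_main1}) after several group-theoretic reductions. First, if $P$ is a torsion section, then multiplying by the exact order of $P$ reduces us to $P=0$, which is exactly Theorem~\ref{thm_CT}; so we may assume $P_K\in A(K)$ is non-torsion. By the Lang--N\'eron theorem $\mc{A}_b(k')$ is a finitely generated abelian group for every $b\in B(k')$, and by Theorem~\ref{thm_CT} there is $N_0$ with $\#\mc{A}_b(k')[\ell^\infty]\le N_0$ for all $b$. For $b\in B(k')$ set $n_\ell(b)\colonequals\sup\{\,n\ge 0:\exists\,z\in\mc{A}_b(k'),\ \ell^n z=P_b\,\}$. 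When $P_b$ is non-torsion finite generation forces $n_\ell(b)<\infty$, and the set in the theorem is $\bigsqcup_{n=0}^{n_\ell(b)}\{z\in\mc{A}_b(k'):\ell^n z=P_b\}$, each piece being empty or a torsor under a subgroup of $\mc{A}_b(k')[\ell^\infty]$, hence of size $\le N_0(n_\ell(b)+1)$. Since $\mc{A}\to B$ has no non-trivial isotrivial abelian subscheme, the function-field N\'eron--Tate height of $P_K$ is strictly positive (otherwise $P_K$ would lie in the isotrivial trace part), so by the specialization theorem for N\'eron--Tate heights $P_b$ is torsion for only finitely many $b\in B(k')$, each contributing a finite set. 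Thus it suffices to prove $\sup_{b\in B(k')}n_\ell(b)<\infty$.

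Suppose this fails: there are $b_m\in B(k')$ and $z_m\in\mc{A}_{b_m}(k')$ with $\ell^{n_m}z_m=P_{b_m}$ and $n_m\to\infty$. Form the tower of division multisections $Y_n\colonequals[\ell^n]^{-1}\bigl(P(B)\bigr)\subset\mc{A}$; each $Y_n\to B$ is finite \'etale (the pullback of the finite \'etale isogeny $[\ell^n]$ along $P$), and $z_m$ is a $k'$-point of $Y_{n_m}$ over $b_m$. Let $C_m$ be the irreducible component of $Y_{n_m}$ through $z_m$ and $\xi_m\in A(\ov K)$ its generic point, so $\ell^{n_m}\xi_m=P_K$ and $C_m=C_{\xi_m}$. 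By Proposition~\ref{prop_ht}(2),
\[
\widehat h_L^{\ov H}(\xi_m)=\frac{1}{\ell^{2n_m}}\,\widehat h_L^{\ov H}(P_K)\longrightarrow 0,
\]
so $(\xi_m)$ is small, and the $\xi_m$ are pairwise distinct (else $P_K$ would be torsion). Passing to a subsequence and applying the Bogomolov conjecture over finitely generated fields \cite{Mor00} to the Zariski closure of $\{\xi_m\}$, we may replace $\xi_m$ by $m_0\xi_m$ for a fixed integer $m_0$ and assume $(\xi_m)$ is generic in an abelian subvariety $A_1\subseteq A$; here $A_1\ne 0$ because $P_K$ is non-torsion, and passing to $m_0\xi_m$ only decreases $\deg(\xi_m)$ and $g(C_{\xi_m})$, through the finite map $[m_0]$ of the associated multisections. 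Write $\mc{A}_1\subseteq\mc{A}$ for the Zariski closure of $A_1$, an abelian subscheme.

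The tower $C_{\xi_m}\to C_{\ell\xi_m}\to\cdots\to C_{\ell^{n_m}\xi_m}$ exhibits $\ov{C}_{\xi_m}\to\ov B$ as a cover of smooth projective curves that is unramified over $B$ and ramified only over the fixed finite set $\ov B\setminus B=\{s_1,\dots,s_N\}$. By Riemann--Hurwitz, $g(C_{\xi_m})$ is governed by the number of points of $\ov{C}_{\xi_m}$ over each $s_i$, which in turn is controlled, via the local monodromy of the Betti foliation, by the orders $\ord(s_i)$ exactly as in the proof of Proposition~\ref{prop_geqpre}. Combining this with $n_m\to\infty$ and feeding it into Theorem~\ref{thm_main1} applied to $\mc{A}_1\to B$ forces $\mc{A}_1$ to be isotrivial; since $A_1\ne 0$ this contradicts the hypothesis on $\mc{A}$. (In the cases where the estimate produces $g(C_{\xi_m})$ bounded one may instead apply Theorem~\ref{thm_bogotype} directly to the distinct small points $\xi_m$.)

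\emph{Expected main obstacle.} The delicate step is the last one: converting the purely group-theoretic input ``$P_{b_m}$ is $\ell^{n_m}$-divisible in $\mc{A}_{b_m}(k')$ with $n_m\to\infty$'' into quantitative geometry of the division multisections $C_{\xi_m}$ — the precise interplay of their genus, their degree over $B$, and the ramification over $\ov B\setminus B$ — in a form that triggers Theorem~\ref{thm_main1} (or Theorem~\ref{thm_bogotype}). The secondary points requiring care are preserving genericity through the Bogomolov reduction and disposing of the finitely many fibers on which $P_b$ is torsion.
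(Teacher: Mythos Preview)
Your proposal has a genuine gap at exactly the step you flag as the main obstacle, and the route you sketch for filling it does not work. To invoke Theorem~\ref{thm_main1} or Theorem~\ref{thm_bogotype} you need $g(C_{\xi_m})/\deg(\xi_m)\to 0$ or $g(C_{\xi_m})$ bounded. You propose to get this from local monodromy ``exactly as in the proof of Proposition~\ref{prop_geqpre}'', but that argument rests on the fact that a \emph{torsion} multisection is contained in finitely many Betti leaves, so that each ramification index of $\ov{C_x}\to\ov B$ over $s_i$ divides $\ord(s_i)$. Your $\xi_m$ are non-torsion (since $P_K$ is), so $C_{\xi_m}$ does not lie in Betti leaves and its ramification over $s_i$ is not controlled by $\ord(s_i)$; when some $\ord(s_i)=\infty$ there is no bound whatsoever. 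Even if one grants the Betti-leaf estimate, Riemann--Hurwitz only yields $2g(C_{\xi_m})-2\le \deg(\xi_m)\bigl(2g(B)-2+N-\sum_i 1/\ord(s_i)\bigr)$, and the bracket is $\ge 0$ by Proposition~\ref{prop_geqpre} but not $=0$: forcing equality is precisely Proposition~\ref{prop_ordeq}, whose proof already \emph{assumes} the genus hypothesis you are trying to establish.

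The paper's proof bypasses this entirely via an idea absent from your plan: Faltings' theorem. Set $T=\{x\in A(\ov K):\ell^n x=x_0\text{ for some }n\ge 0,\ g(C_x)\le 1\}$. An infinite branch in $T$ would be a small sequence with bounded genus, so $T$ is finite by Theorem~\ref{thm_bogotype}. The frontier $S=\{x:\ell x\in T,\ x\notin T\}$ is then finite and every $x\in S$ has $g(C_x)\ge 2$, so each $C_x(k')$ is finite by Faltings. Any $z$ in the target set either lies on $C_x$ for some $x\in T$ (total contribution $\le\sum_{x\in T}[K(x){:}K]$), or has $\ell^m z\in C_x(k')$ for some $x\in S$, which forces $b$ into the finite set $\bigcup_{x\in S}\pi(C_x(k'))$; over each such $b$ the set is finite. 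No bound on $g(C_{\xi_m})$ is ever needed.

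A secondary point: you invoke Theorem~\ref{thm_CT} twice (for torsion $P$, and for the uniform bound $N_0$ on $\#\mc{A}_b(k')[\ell^\infty]$). In the paper's logic Theorem~\ref{thm_CT} is \emph{deduced from} Theorem~\ref{thm_sec}, so this is circular for the intended application; the paper's argument uses only Theorem~\ref{thm_bogotype} and Faltings, and treats torsion and non-torsion $P$ uniformly.
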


\begin{proof}
After replacing $\mc{A}$ and $B$ by $\mc{A}\otimes_k k^{\prime}$ and $B\otimes_k k^{\prime}$, we may assume $k^{\prime}=k$. Let $x_0 \in A(K)$ be the point in the generic fiber corresponding to the section $P$. Consider the set 
\[ T = \{ x \in A(\overline{K}) : \ell^n x = x_0 \text{ for some } n \geq 0 \text{ and } g(C_x) \leq 1 \}. \]
If $T$ were infinite, there would exist an infinite sequence $(x_n)_{n \geq 0}$ in $A(\overline{K})$ such that $\ell \cdot x_{n+1} = x_n$ for all $n \geq 0$. This would imply 
\[ \lim_{n \to \infty} \widehat{h}_L^{\overline{H}}(x_n) = \lim_{n \to \infty} \frac{1}{\ell^{2n}} \widehat{h}_L^{\overline{H}}(x_0) = 0. \]
Since $g(C_{x_n}) \leq 1$, Theorem \ref{thm_bogotype} implies that $\mathcal{A}$ contains a non-trivial isotrivial abelian subscheme, which contradicts our assumption. Thus, $T$ is finite.

The set $T$ forms a finite rooted tree with $x_0$ as the root:
\[
\begin{tikzcd}
    & x_1^{(1)} \arrow[ld, "\ell"] & x_2^{(1)}\quad\cdots \arrow[l, "\ell"']  \\
x_0 & x_1^{(2)} \arrow[l, "\ell"]  & x_2^{(2)}\quad\cdots \arrow[ld, "\ell"'] \\
    & x_1^{(3)} \arrow[lu, "\ell"] & x_2^{(3)}\quad\cdots \arrow[l, "\ell"]  
\end{tikzcd}.
\]

Now define 
\[S \colonequals  \{ x \in A(\overline{K}) : \ell x \in T \text{ and } x \notin T \}.\] Since $T$ is finite, $S$ is also finite. By the definition of $T$, every $x \in S$ must satisfy $g(C_x) \geq 2$. For any point $z \in \mathcal{A}(\overline{k})$, if $\ell^n z \in P$ for some $n \geq 0$, then either $z\in C_x$ for some $x \in T$, or $\ell^m z \in C_x$ for some $x \in S$ and $m \geq 0$.

For each $x \in S$, we have $g(C_x) \geq 2$. If $C_x$ is geometrically connected, then by Faltings' Theorem \cite[Chap. VI, Theorem 3]{FW92}, the set of rational points $C_x(k)$ is finite. If $C_x$ is not geometrically connected, then $C_x(k)=\varnothing$. Consider
\[ V \colonequals  \bigcup_{x \in S} \operatorname{Im}\big( C_x(k) \to B(k) \big) \subset B(k), \]
which is a finite set. For any $z \in \mathcal{A}(k)$ satisfying $\ell^m z \in C_x$ for some $x \in S$ and $m \geq 0$, $z$ must lie in a fiber $\mathcal{A}_b$ with $b \in V$. Let $N_1 \colonequals  \max\limits_{b \in V} \#\mathcal{A}_b(k)_{\mr{tors}}$. Then for $b\in V$, 
\[ \#\left\{z \in \mathcal{A}_b(k) : \ell^n z = P_b \text{ for some } n \geq 0 \right\} \leq N_1. \]
For each $x \in T$ and $b \in B(k)$, the set $\mathcal{A}_b \cap C_x$ contains at most $[K(x):K]$ points. Let $N_2 \colonequals  \sum\limits_{x \in T} [K(x):K]$. Then for $b\notin V$,
\[ \#\left\{z \in \mathcal{A}_b(k) : \ell^n z = P_b \text{ for some } n \geq 0 \right\} \leq N_2. \]
Taking $N \colonequals  \max\{N_1, N_2\}$, we obtain
\[ \#\left\{z \in \mathcal{A}_b(k) : \ell^n z = P_b \text{ for some } n \geq 0 \right\} \leq \max\{N_1, N_2\}=N. \]
for any $b \in B(k)$.
\end{proof}

When the section $P$ is non-torsion, Theorem \ref{thm_sec} generally fails for abelian schemes that contain an isotrivial abelian subscheme, as shown in Example \ref{exam_nontors}. If the section $P$ is torsion, it suffices to consider the case where $P$ is the zero section. The following lemma provides the necessary bound for the isotrivial case.

\begin{lemma}\label{lem_isotriv}
Assume the abelian scheme $\mathcal{A} \to B$ is isotrivial. For any integer $d > 0$, there exists an integer $N \colonequals  N(\mathcal{A}, B, d) > 0$ such that 
\[ \#\mathcal{A}_b(k')_{\mr{tors}} \leq N \]
for any finite extension $k'/k$ with $[k':k] \leq d$ and any $b \in B(k')$.
\end{lemma}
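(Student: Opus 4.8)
The plan is to reduce to the case of a constant abelian scheme by passing to a finite covering, then invoke uniform boundedness results for torsion points of a fixed abelian variety over fields of bounded degree.

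Since $\mathcal{A}\to B$ is isotrivial, by definition there is a finite cover $q\colon B'\to B$, a finite extension $k_0/k$, and an abelian variety $A_0$ over $k_0$ such that $(\mathcal{A}\times_B B')\otimes_k k_0 \simeq A_0\times_{\Spe k_0} B'_{k_0}$. Enlarging $k_0$ if necessary we may assume $B'$ is geometrically integral over $k_0$ and the isomorphism is defined over $k_0$. Let $e\colonequals [k_0:k]\cdot\deg(q)$. The key observation is that for any finite extension $k'/k$ with $[k':k]\le d$ and any $b\in B(k')$, the fiber $\mathcal{A}_b$ is an abelian variety over $k'$, and it becomes isomorphic to $A_0$ after base change to some finite extension $k''$ of $k'$: indeed, choose a point $b'\in B'(k'')$ above $b$ with $[k'':k']\le \deg(q)$, arrange $k''\supseteq k_0$ (at the cost of a further factor $[k_0:k]$), and then $(\mathcal{A}_b)\otimes_{k'}k''\simeq (\mathcal{A}\times_B B')_{b'}\otimes k''\simeq (A_0)_{k''}$. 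Hence $[k'':\mathbb{Q}]\le e\cdot[k':k]\cdot[k:\mathbb{Q}]$ is bounded in terms of $\mathcal{A},B,d$ alone, say by $D\colonequals e\cdot d\cdot[k:\mathbb{Q}]$. (Here I use that $k$ is a number field; for general finitely generated $k$ one works over a number subfield or uses the appropriate specialization/Northcott-type statement, but the structure of the argument is the same — see below.)

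The second step is to bound $\#\mathcal{A}_b(k')_{\mathrm{tors}}$. Since $\mathcal{A}_b(k')_{\mathrm{tors}}\subseteq (\mathcal{A}_b\otimes_{k'}k'')(k'')_{\mathrm{tors}}\simeq A_0(k'')_{\mathrm{tors}}\otimes\text{(nothing)}$, wait — more precisely the group $\mathcal{A}_b(k')_{\mathrm{tors}}$ injects into $((\mathcal{A}_b)_{k''})(k'')_{\mathrm{tors}}\simeq (A_0)_{k''}(k'')_{\mathrm{tors}}$, and the latter is the torsion subgroup of the $k''$-points of the \emph{fixed} abelian variety $A_0$, where $k''$ ranges over number fields of degree at most $D$ over $\mathbb{Q}$. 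By the theorem of Cadoret--Tamagawa (Theorem \ref{thm_CT}) applied to the trivial family $A_0\times_{k_0}\mathbb{P}^1_{k_0}\to \mathbb{P}^1_{k_0}$ — or, more directly, by the fact that for a single abelian variety over a number field the torsion in points over extensions of bounded degree is uniformly bounded (a consequence of, e.g., Merel's theorem in the elliptic case and its higher-dimensional analogues, or simply of Theorem \ref{thm_CT} itself applied to a constant family) — there is a bound $N=N(A_0,D)=N(\mathcal{A},B,d)$ with $\#(A_0)_{k''}(k'')_{\mathrm{tors}}\le N$ for all such $k''$. This gives the desired uniform bound.

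I should be careful about one subtlety, which I expect to be the main obstacle: the excerpt works with $k$ an \emph{arbitrary} finitely generated field over $\mathbb{Q}$, not a number field, so ``$[k':k]\le d$'' does not directly bound a number-field degree. To handle this one should either (a) specialize: choose a number field $k_1\subseteq k$ over which $A_0$ has a model $A_1$ with $(A_1)_k\simeq A_0$, and observe that any torsion point of $\mathcal{A}_b$ over $k'$ of order $n$ forces $A_1$ to acquire a full level structure consideration bounded in $n$, then use that $A_1(\bar{k_1})[n]$ is defined over an extension of $k_1$ of degree polynomial in $n$ together with Northcott over $k_1$; or (b) invoke directly the function-field form of uniform boundedness for the constant family, i.e. apply Theorem \ref{thm_CT} to $A_0\times_{k_0} B'' \to B''$ for a suitable auxiliary curve, which already packages the finite-generation issue. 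Approach (b) is cleanest: the torsion of $\mathcal{A}_b(k')$ embeds into the torsion of a fiber of an explicit constant abelian scheme over a curve, evaluated at a point of bounded degree, so Theorem \ref{thm_CT} (whose hypotheses are exactly this generality) delivers the bound $N$. The rest is bookkeeping on how the degree of the point over the base and the degree of the residue field vary under the finite cover $B'\to B$, which is elementary.
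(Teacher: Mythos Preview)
Your reduction to a constant abelian scheme and the degree bookkeeping are exactly the paper's approach. You also correctly isolate the key input needed for the second step: for a \emph{fixed} abelian variety $A_0$ over $k_0$, the torsion subgroup $A_0(k'')_{\mathrm{tors}}$ is bounded in terms of $[k'':k_0]$ alone. The paper cites this as Masser's bound \cite{Mas86}, which holds over finitely generated fields of characteristic zero and thus also disposes of the subtlety you raise about $k$ not being a number field --- no specialization or auxiliary curve is needed.

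Where your proposal breaks down is in the justifications you offer for this bound. Theorem~\ref{thm_CT} only controls $\ell$-primary torsion for a single fixed prime $\ell$, not the full torsion subgroup, so applying it to a constant family does not give the lemma as stated. More seriously, in this paper Lemma~\ref{lem_isotriv} is an ingredient in the \emph{new} proof of Theorem~\ref{thm_CT} (it handles the induction step when $\mc{A}$ contains an isotrivial abelian subscheme), so invoking Theorem~\ref{thm_CT} here is circular. Finally, ``higher-dimensional analogues of Merel'' would amount to the uniform boundedness conjecture itself, which is open for $g\ge 2$; fortunately only the far weaker Masser bound, with $A_0$ fixed, is required.
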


\begin{proof}
Since $\mathcal{A} \to B$ is isotrivial, there exists a finite cover $p: B' \to B$, a finite extension $k_0/k$, and an abelian variety $A_0$ over $k_0$ such that $(\mathcal{A} \times_B B') \otimes_k k_0 \simeq A_0 \times_{\operatorname{Spec} k_0} B'_{k_0}$. 

Let $r = \deg(p)$. Any point $b \in B(k')$ has at most $r$ preimages $p^{-1}(b) = \{b_1, \dots, b_r\}$ in $B'(\overline{k})$. Let $k_i \colonequals  k'(b_i)$ be the residue fields; then $[k_i:k] \leq rd$. By Masser's bound \cite{Mas86}, there exists a constant $M(A_0, D)$ such that $|A_0(F)_{\mr{tors}}| \leq M$ for any extension $F/k_0$ with $[F:k_0] \leq D$.

Let $k_i' \colonequals  k_i \cdot k_0$. Then $[k_i':k_0] \leq rd$. It follows that 
\[ \#\mathcal{A}_{b_i}(k_i')_{\mr{tors}} = \#A_0(k_i')_{\mr{tors}} \leq M(A_0, rd). \]
Thus 
\[ \#\mathcal{A}_b(k')_{\mr{tors}} \leq \sum_{i=1}^r \#\mathcal{A}_{b_i}(k_i')_{\mr{tors}} \leq r \cdot M(A_0, rd[k_0:k]). \]
Taking $N\colonequals r \cdot M(A_0, rd[k_0:k])$ concludes the proof.
\end{proof}

Here we give a new proof of Theorem \ref{thm_CT}.

\begin{theorem}
Let $k$ be a finitely generated field over $\mb{Q}$ and $B$ be a smooth, geometrically integral curve over $k$. Let $\pi: \mc{A} \to B$ be an abelian scheme. For any finite extension $k'/k$ and any integer $\ell$, there exists an integer $N \colonequals  N(\mc{A}, B, k, k', \ell)$ such that for all $b \in B(k')$, 
\[ \#\mc{A}_b(k')[\ell^{\infty}] \leq N, \]
where $\mc{A}_b(k')[\ell^{\infty}] \colonequals  \bigcup\limits_{n=0}^{\infty} \mc{A}_b(k')[\ell^n]$ is the set of $\ell$-primary torsion points in $\mc{A}_b(k')$.
\end{theorem}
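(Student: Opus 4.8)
\emph{Proof plan.} The plan is: reduce to $k'=k$ and to $\ell$ prime; then, over the function field, split $A$ up to isogeny into an isotrivial abelian variety $A_1$ and an abelian variety $A_2$ with no non-trivial isotrivial abelian subvariety; spread this out over a dense open $B^\circ\subseteq B$; bound the $\ell$-primary torsion of the $\mathcal{A}_1$-fibers by Lemma~\ref{lem_isotriv} and of the $\mathcal{A}_2$-fibers by Theorem~\ref{thm_sec}; cross back through the isogeny; and absorb the finitely many fibers over $B\setminus B^\circ$.

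First I would replace $\mathcal{A},B$ by $\mathcal{A}\otimes_k k'$, $B\otimes_k k'$ (still an abelian scheme over a smooth geometrically integral curve over a finitely generated field), reducing to $k'=k$. Writing $\ell=\prod_{p\mid\ell}p^{a_p}$, the $p$-primary component $z_p$ of a torsion point $z$ is an integer multiple of $z$, hence $k$-rational whenever $z$ is; so $\mathcal{A}_b(k)[\ell^\infty]=\bigoplus_{p\mid\ell}\mathcal{A}_b(k)[p^\infty]$, and it suffices to bound $\#\mathcal{A}_b(k)[p^\infty]$ for each prime $p\mid\ell$ (the cases $\ell\le 1$ being trivial). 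Thus we may assume $\ell$ is prime.

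The key step is the dévissage. Over $K=k(B)$, Poincaré complete reducibility (using the polarization attached to $L$) gives an isogeny $A\sim A_1\times A_2$, where $A_1$ is the product of the isotrivial simple isogeny factors of $A$ and $A_2$ the product of the non-isotrivial ones. As isotriviality is an isogeny invariant in characteristic $0$, $A_1$ is isotrivial and $A_2$ has no non-trivial isotrivial abelian subvariety. Spreading out, one obtains a dense open $B^\circ\subseteq B$, abelian schemes $\mathcal{A}_1,\mathcal{A}_2\to B^\circ$ with $\mathcal{A}_1$ isotrivial and $\mathcal{A}_2$ containing no non-trivial isotrivial abelian subscheme, and an isogeny $\psi\colon\mathcal{A}_1\times_{B^\circ}\mathcal{A}_2\to\mathcal{A}|_{B^\circ}$; put $N_0:=\deg\psi$. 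Choosing $\psi^\vee$ with $\psi\circ\psi^\vee=[N_0]$, the map $z\mapsto\psi^\vee_b(z)$ on $\ell$-primary torsion has kernel of order bounded only in terms of $N_0$ and $g$, so for $b\in B^\circ(k)$
\[
\#\mathcal{A}_b(k)[\ell^\infty]\;\le\;C_0\cdot\#\mathcal{A}_{1,b}(k)[\ell^\infty]\cdot\#\mathcal{A}_{2,b}(k)[\ell^\infty],\qquad C_0=C_0(N_0,g).
\]
Lemma~\ref{lem_isotriv} (with $d=1$) bounds the first factor uniformly over $B^\circ(k)$, and Theorem~\ref{thm_sec} (with $P$ the zero section and the prime $\ell$) bounds the second; hence $\#\mathcal{A}_b(k)[\ell^\infty]$ is uniformly bounded on $B^\circ(k)$. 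Since $(B\setminus B^\circ)(k)$ is finite and each remaining $\mathcal{A}_b$ is an abelian variety over the finitely generated field $k$, its $\ell$-primary torsion is finite; taking the maximum over this finite set and then the product over $p\mid\ell$ yields the uniform bound $N$.

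I expect the main obstacle to be the dévissage: one must run Poincaré reducibility over the function field, spread it out so that the hypothesis of Lemma~\ref{lem_isotriv} holds for $\mathcal{A}_1$ and the hypothesis of Theorem~\ref{thm_sec} holds for $\mathcal{A}_2$ over a common dense open, and verify that passing through the isogeny $\psi$ alters the $\ell$-primary torsion only by a bounded factor. The initial reductions and the finitely many fibers outside $B^\circ$ are routine.
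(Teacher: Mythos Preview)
Your argument is correct but structurally different from the paper's. The paper proceeds by induction on the relative dimension $g$: if $\mathcal{A}$ contains a non-trivial isotrivial abelian subscheme $\mathcal{A}_1$, it passes to the quotient $\overline{\mathcal{A}}=\mathcal{A}/\mathcal{A}_1$ (which is an abelian scheme over all of $B$, so no spreading out is needed), applies Lemma~\ref{lem_isotriv} to $\mathcal{A}_1$ and the induction hypothesis to $\overline{\mathcal{A}}$, and reads off the bound from the exact sequence $0\to\mathcal{A}_{1,b}(k')[\ell^\infty]\to\mathcal{A}_b(k')[\ell^\infty]\to\overline{\mathcal{A}}_b(k')[\ell^\infty]$. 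You instead do the d\'evissage in one shot via Poincar\'e complete reducibility, obtaining an isogeny $A\sim A_1\times A_2$ with $A_1$ isotrivial and $A_2$ having no isotrivial subvariety, and then pass through the isogeny. Your route is perfectly valid and perhaps more conceptually direct, but it costs you the spreading out to $B^\circ$, the isogeny constant $C_0$, and the separate treatment of the finitely many fibers over $B\setminus B^\circ$; the paper's inductive quotient argument avoids all three. On the other hand, your approach makes explicit that the only inputs are Lemma~\ref{lem_isotriv} and Theorem~\ref{thm_sec}, whereas the paper's induction somewhat obscures that the final non-isotrivial quotient is handled by Theorem~\ref{thm_sec} in one application.
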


\begin{proof}
We proceed by induction on the relative dimension $g = \dim(\mc{A}/B)$. The case $g=0$ is trivial. Assume the theorem holds for all abelian schemes of relative dimension less than $g$.

\noindent\textbf{Case 1.} If $\mc{A}$ contains no non-trivial isotrivial abelian subscheme, let $P: B \to \mc{A}$ be the zero section. We have
\[ \mc{A}_b(k')[\ell^{\infty}] = \{ z \in \mc{A}_b(k') : \ell^n z = P_b \text{ for some } n \geq 0 \}. \]
The conclusion follows directly from Theorem \ref{thm_sec}.

\noindent\textbf{Case 2.} If $\mc{A}$ contains a non-trivial isotrivial abelian subscheme $\mc{A}_1$, let $\overline{\mc{A}} \colonequals  \mc{A}/\mc{A}_1$ be the quotient abelian scheme. Then $\dim(\overline{\mc{A}}/B) < g$. By the induction hypothesis, there exists a constant $N_1 = N_1(\overline{\mc{A}}, B, k', \ell)$ such that $\#\overline{\mc{A}}_b(k')[\ell^{\infty}] \leq N_1$ for all $b \in B(k')$. By Lemma \ref{lem_isotriv}, there exists $N_2 = N_2(\mc{A}_1, B, [k':k])$ such that $\#\mc{A}_{1,b}(k')_{\mr{tors}} \leq N_2$ for all $b \in B(k')$.

Consider the exact sequence of abelian groups of $\ell$-primary torsion points:
\[ 0 \to \mc{A}_{1,b}(k')[\ell^{\infty}] \to \mc{A}_b(k')[\ell^{\infty}] \to \overline{\mc{A}}_b(k')[\ell^{\infty}]. \]
It follows that 
\[ \#\mc{A}_b(k')[\ell^{\infty}]\leq \#\mc{A}_{1,b}(k')[\ell^{\infty}] \cdot \#\overline{\mc{A}}_b(k')[\ell^{\infty}] \leq N_2 \cdot N_1. \]
Taking $N \colonequals  N_1 N_2$ completes the proof.
\end{proof}

\bibliography{reference.bib}
\bibliographystyle{alpha}

\noindent \small{Address: \textit{Institute for Theoretical Sciences, Westlake University, Hangzhou 310030, China}}

\noindent \small{Email: \texttt{jizhuchao@westlake.edu.cn}}

\noindent \small{Address: \textit{School of Mathematical Sciences, Peking University, Beijing 100871, China}}

\noindent \small{Email: \texttt{soyo999@pku.edu.cn}}

\noindent \small{Address: \textit{Beijing International Center for Mathematical Research, Peking University, Beijing 100871, China}}

\noindent \small{Email: \texttt{xiejunyi@bicmr.pku.edu.cn}}
\end{document}